\tikzset{
  laser beam action/.style={
    line width=\pgflinewidth+1.4pt,draw opacity=.095,draw=#1,
  },
  laser beam recurs/.code 2 args={%
    \pgfmathtruncatemacro{\level}{#1-1}%
    \ifthenelse{\equal{\level}{0}}%
    {\tikzset{preaction={laser beam action=#2}}}%
    {\tikzset{preaction={laser beam action=#2,laser beam recurs={\level}{#2}}}}
  },
  laser beam/.style={preaction={laser beam recurs={20}{#1}},draw opacity=1,draw=#1},
}
\definecolor{navy}{rgb}{0,0,.65}
\def\@tocline#1#2#3#4#5#6#7{\relax
  \ifnum #1>\c@tocdepth % then omit
  \else
    \par \addpenalty\@secpenalty\addvspace{#2}%
    \begingroup \hyphenpenalty\@M
    \@ifempty{#4}{%
      \@tempdima\csname r@tocindent\number#1\endcsname\relax
    }{%
      \@tempdima#4\relax
    }%
    \parindent\z@ \leftskip#3\relax \advance\leftskip\@tempdima\relax
    \rightskip\@pnumwidth plus4em \parfillskip-\@pnumwidth
    #5\leavevmode\hskip-\@tempdima
      \ifcase #1
       \or\or \hskip 1em \or \hskip 2em \else \hskip 3em \fi%
      #6\nobreak\relax
    \dotfill\hbox to\@pnumwidth{\@tocpagenum{#7}}\par
    \nobreak
    \endgroup
  \fi}
\newtheorem{theorem}{Theorem}[subsection]
\newtheorem{corollary}[theorem]{Corollary}
\newtheorem{lemma}[theorem]{Lemma}
\newtheorem{proposition}[theorem]{Proposition}
\newtheorem{definition}[theorem]{Definition}
\newtheorem{conjecture}[theorem]{Conjecture}
\newtheorem{quasi-theorem}[theorem]{Quasi-Theorem}
\newtheorem{blank remark}[theorem]{}
\newtheorem{Th}{Theorem}[]
\newtheorem{rem1}[theorem]{Remark}
\newenvironment{remark}{\begin{rem1}\em}{\end{rem1}}
\newtheorem{not1}[theorem]{Notation}
\newcommand{\A}{{\mathbb{A}}}
\newcommand{\PP}{\mathbb{P}}         
\newcommand{\QQ} {{\mathbb Q}}		
\newcommand{\RR} {{\mathbb R}}		
\newcommand{\ZZ} {{\mathbb Z}}		
\newcommand{\FF}{{\mathbb F}}
\DeclareMathOperator{\Gal}{Gal}
\DeclareMathOperator{\sSpec}{Spec_{\ \!}}
\DeclareMathOperator{\Div}{div}
\newcommand{\lra}{{\longrightarrow}}
\DeclareMathOperator{\Def}{\overset{{}_{\text{def}}}{=}}
\newsavebox{\foobox}
\newcommand{\mono}{\!\xymatrix{{}\ar@{^{(}->}[r]&{}}\!}
\newcommand{\epi}{\!\xymatrix{{}\ar@{->>}[r]&{}}\!}
\newcommand{\young}{\scalebox{.7}{$\pmb{\square\!\square}${\larger\larger $\pmb{\cdot\!\cdot\!\cdot}$}$\pmb{\square}$}}
\newcommand{\lilF}{\mbox{{\smaller\smaller\smaller\smaller\smaller $\overline{\FF_{\!q}\!}$}}}
\begin{document}

\title{Primes in short intervals on curves over finite fields}

\author{Efrat Bank}
\address{\newline {\bf Efrat Bank} \newline
University of Michigan Mathematics Department \newline
530 Church Street \newline
Ann Arbor, MI 48109-1043 \newline
United States}
\email{\href{mailto:ebank@umich.edu}{ebank@umich.edu}}

\author{Tyler Foster}
\address{\newline {\bf Tyler Foster} \newline
Max Planck Institute for Mathematics \newline
Vivatsgasse 7 \newline
53111 Bonn \newline
Germany}
\email{\href{mailto:foster@mpim-bonn.mpg.de}{foster@mpim-bonn.mpg.de}}

\date{\today}

\pagestyle{plain}

\maketitle
\maketitle

\begin{abstract}
	We prove an analogue of the Prime Number Theorem for short intervals on a smooth projective geometrically irreducible curve of arbitrary genus over a finite field. A short interval ``of size $E$" in this setting is any additive translate of the space of global sections of a sufficiently positive divisor $E$ by a suitable rational function $f$. Our main theorem gives an asymptotic count of irreducible elements in short intervals on a curve in the ``large $q$" limit, uniformly in $f$ and $E$. This result provides a function field analogue of an unresolved short interval conjecture over number fields, and extends a theorem of Bary-Soroker, Rosenzweig, and the first author, which can be understood as an instance of our result for the special case of a divisor $E$ supported at a single rational point on the projective line.
\end{abstract}

\setcounter{tocdepth}{1}

\tableofcontents

%%%%%%%%%%%%%%%%%%%%%%%%%%%%%%%%%%%%%%%
%%%%%%%%%%%%%%%%%%%%%%%%%%%%%%%%%%%%%%%

\begin{section}{Introduction}
	
	In this paper, we give an asymptotic count of irreducible elements inside short intervals on a smooth projective geometrically irreducible curve over a finite field. Our main result (\S\ref{subsec: number fields}, Theorem \ref{theorem: main theorem}) provides a function field analogue of an unresolved short interval conjecture for number fields (Conjecture \ref{conj: prime numbers in SI, number field 3}), and extends a short interval theorem of Bary-Soroker, Rosenzweig, and the first author \cite[Corollary 2.4]{BBR15} for polynomials over finite fields.
	
	The notion of short intervals on a curve which we use is a natural analogue of the familiar notion of short intervals over the integers. In this introduction, we review what is known about short intervals over the integers, over number fields, and over polynomials with coefficients in a finite field. The analogies that run between these different settings lead naturally to our definition of a short interval on a curve and to the statement of our main result.
	
\begin{subsection}{The Prime Number Theorem for short intervals}\label{subsec: Primes in SI}
	The Prime Number Theorem (PNT) states that the asymptotic density of prime integers in real intervals $(0,x]$ is $1/\log x$. In other words, if we let $\pi(x)$ denote the prime counting function
	$$
	\pi(x)
	\ =\ 
	\# \{ 0<p\leq x  :  p \mbox{ is a prime integer}\},
	$$
then 
	\begin{equation}\label{eq: PNT}
	\pi(x)\sim \frac{x}{\log x}
	\ \ \ \ \ \ \mbox{as}\ \ \ \ \ \ 
	x\to \infty.
	\end{equation}
We get more refined statements by considering the asymptotic density of primes in families of smaller intervals. Letting $\Phi(x)$ be a real valued function with $0<\Phi(x)<x$, we can ask for the density of primes in the intervals $I(x,\Phi)\Def\big[x-\Phi(x),x+\Phi(x)\big]$ as $x\to\infty$. Define
	$$
	\begin{array}{rcl}
	\pi\big(I(x,\Phi)\big)
	&
	\!\!\Def\!\!
	&
	\#\big\{p\in I(x,\Phi):p \mbox{ is a prime integer}\big\},
	\end{array}
	$$
Then the naive conjecture on the asymptotic density of primes in the intervals $I(x,\Phi)$ is
	\begin{equation}\label{eq:PNT interval}
	\pi\big(I(x,\Phi)\big)\ \sim\ \frac{\#\big(I(x,\Phi)\big)}{\log x}
	\ \ \ \ \ \ \mbox{as}\ \ \ \ \ \ 
	x\to \infty.
	\end{equation}
For fixed $0<c<1$ and $\Phi(x)\sim c\ \!x$, it is a straightforward consequence of the PNT that \eqref{eq:PNT interval} holds. Assuming the Riemann hypothesis, \eqref{eq:PNT interval} holds for $\Phi(x)=x^{\varepsilon+\mbox{{\smaller\smaller\smaller\smaller\smaller $\frac{1}{2}$}}}$ for small $0<\varepsilon<1/2$. On the other hand, Maier \cite{Maier} established what is now known as the ``Maier phenomenon": for $\Phi(x)=(\log\ x)^{A}$, with $A>1$, the asymptotic formula \eqref{eq:PNT interval} fails. A classical conjecture predicts the following ``short interval" prime number conjecture:
	
\begin{conjecture}\label{conj: prime numbers in SI}
\normalfont
		For $0<\varepsilon<1$ and $\Phi(x)=x^{\varepsilon}$, the asymptotic formula \eqref{eq:PNT interval} holds. 
\end{conjecture}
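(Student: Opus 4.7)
The plan is to attack Conjecture \ref{conj: prime numbers in SI} via the explicit formula for the Chebyshev function $\psi(y) = \sum_{n\leq y}\Lambda(n)$. Forming the difference at the endpoints of $I(x,\Phi)$ gives
$$
\psi\bigl(x+\Phi(x)\bigr) - \psi\bigl(x-\Phi(x)\bigr) \ =\ 2\Phi(x) \ -\ \sum_{\rho}\frac{(x+\Phi)^{\rho}-(x-\Phi)^{\rho}}{\rho} \ +\ O(\log x),
$$
where $\rho = \beta + i\gamma$ runs over the nontrivial zeros of $\zeta(s)$. Since $\log$ is essentially constant on $I(x,\Phi)$ when $\Phi(x) = x^{\varepsilon}$, passing from $\psi$ back to $\pi$ is harmless and it suffices to show that the zero-sum $S(x)$ is $o(x^{\varepsilon})$.

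Using the mean-value estimate $(x+\Phi)^{\rho}-(x-\Phi)^{\rho} \ll \min\bigl(\Phi(x)\,x^{\beta-1}|\rho|,\, x^{\beta}\bigr)$, I would split at height $T_{0} := x/\Phi(x) = x^{1-\varepsilon}$ and dyadically decompose in both $\sigma = \beta$ and $|\gamma|$. Each piece is then controlled by zero-density estimates of the form $N(\sigma,T) \ll T^{A(\sigma)(1-\sigma)}(\log T)^{B}$, together with a Vinogradov–Korobov zero-free region. This is precisely the engine behind Huxley's unconditional range $\varepsilon > 7/12$ and the range $\varepsilon > 1/2$ conditional on RH.

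The hard part — and the reason this remains a conjecture rather than a theorem — is that for $\varepsilon \leq 1/2$ the method fails even assuming RH. With $\beta = 1/2$ uniformly, each low-lying zero contributes roughly $\Phi(x)\,x^{-1/2}$ to $S(x)$, and the $\sim T_{0}\log T_{0} = x^{1-\varepsilon+o(1)}$ such zeros total to order $x^{1/2+o(1)}$, which exceeds $\Phi(x) = x^{\varepsilon}$ whenever $\varepsilon < 1/2$. Genuine cancellation among the terms $(x+\Phi)^{\rho}-(x-\Phi)^{\rho}$ is therefore required, something well beyond current technology. Moreover, Maier's theorem shows that \eqref{eq:PNT interval} actually fails at scale $\Phi = (\log x)^{A}$, so any prospective proof must in addition detect the phase transition between Maier-type irregularity at scale $x^{o(1)}$ and the conjectured regularity at scale $x^{\varepsilon}$. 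I therefore do not expect to prove Conjecture \ref{conj: prime numbers in SI} by these methods; a genuinely new input — a breakthrough toward RH, a sharp pair-correlation result for zeros, or a combinatorial handle on M\"obius cancellation at scale $x^{\varepsilon}$ — appears indispensable. This is precisely why passing to the function-field setting, where Weil's theorem supplies the Riemann Hypothesis as a theorem, is attractive, and it motivates the present paper.
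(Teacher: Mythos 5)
This statement is Conjecture \ref{conj: prime numbers in SI}, which the paper does not prove and explicitly describes as open (citing only the partial range $\tfrac{7}{12}\le\varepsilon<1$ due to Heath-Brown, improving Huxley, and the Maier phenomenon as an obstruction at scale $(\log x)^{A}$); your proposal correctly declines to prove it, and your account of why the explicit-formula/zero-density method stalls below $\varepsilon=\tfrac12$ even under RH is accurate and consistent with the paper's discussion. The only quibble is attribution: the unconditional $7/12$ threshold is Heath-Brown's refinement of Huxley, not Huxley's original range.
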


\noindent
	In its full generality, Conjecture \ref{conj: prime numbers in SI} is still open. Heath-Brown \cite{HeathBrown1988Crelle}, improving on Huxley \cite{Huxley1972inv}, proved Conjecture \ref{conj: prime numbers in SI} for $\frac{7}{12}\le\varepsilon<1$. We refer the reader to the surveys of Granville \cite{Granville1995, Granville2010} for additional background.
	
\end{subsection}

%%%%%%%%%%%%%%%%%%%%%%%%%%%%%%%%%%%%%%%
	
\begin{subsection}{The Prime Polynomial Theorem for short intervals over $\pmb{\FF_{\!q}[t]}$}\label{subsec: Poly SI}
   	For each finite field $\FF_{\!q}$, the analogy between number fields and function fields provides us with the following table of corresponding sets and quantities:
	\begin{equation}\label{eq: table of analogies}
	{\renewcommand{\arraystretch}{1.5}
	\begin{array}{c|c}
	\pmb{\ZZ}
	&
	\mbox{ {\bf ring of polynomials}  }\pmb{\FF_{\!q}[t]}
	\\\hline\hline
	|x|
	&
	|f|\ \Def\ q^{\text{deg}_{\ \!}f}
	\\\hline
	(0,x]
	&
	M(k,q)\overset{{}_{\text{def}}}{=}\{h\in \FF_{\!q}[t]:h \mbox{ is monic and } \deg{h}= k\}
	\\\hline
	x=\#(0,x]
	&
	q^k=\#M(k,q)
	\\ \hline
	\log x
	&
	k\ =\ \log_{q} q^{k}
	\end{array}
	}
	\end{equation}
If we let $\pi_{q}(k)$ denote the prime polynomial counting function
	$$
	\pi_q(k)\ =\ \# \{h\in M(k,q) : h \mbox{ is irreducible}\},
	$$
then, in accord with Table \eqref{eq: table of analogies}, the Prime Polynomial Theorem (PPT) 
asserts that
	\begin{equation}\label{eq: PPT}
	\pi_q(k)
	\ \sim\ 
	\frac{q^k}{k}
	\ \ \ \ \ \ \mbox{as}\ \ \ \ \ \ 
	q^{k}\to\infty.
	\end{equation}
Table \eqref{eq: table of analogies} also suggests a natural definition of short intervals in $\FF_{\!q}[t]$:
	
\begin{definition}\label{definition: polynomial SI}
\normalfont
	Given any monic non-constant polynomial $f\in\FF_q[t]$ and any positive real number $\varepsilon$, the corresponding \textit{interval} ({\em around $f$}) is the set
	$$
	I(f,\varepsilon)
	\ \Def\ 
	\{h\in\FF_q[t] : | h-f| \leq | f |^{\varepsilon}   \}.
	$$
If $m\Def\lfloor \varepsilon\deg f \rfloor$ and $\FF_{\!q}[t]^{\le m}$ denotes the space of polynomials of degree at most $m$, then $I(f,\varepsilon)=f+\FF_{\!q}[t]^{\le m}$. We say that $I(f,\varepsilon)$ is a \textit{short interval} if $\varepsilon<1$, i.e., if $m<\deg f$.
\end{definition}
\begin{remark}\label{monicToSI}
Note that in view of Definition \ref{definition: polynomial SI}, the set $M(k,q)$ of monic polynomials of degree $k$ is the short interval $I(t^k,k-1)$. 
\end{remark}
	
	Initial results on the density of prime polynomials in short intervals can be deduced from the work of Cohen \cite{Cohen1972} when $\text{char}_{\ \!}\FF_q>\text{deg}_{\ \!}f$, and from the work of Keating and Rudnick \cite{KeatingRudnick2012} in an almost everywhere sense. In \cite{BBR15}, the first author together with Bary-Soroker and Rosenzweig prove the following analogue of Conjecture~\ref{conj: prime numbers in SI} in the large $q$ limit:
	
\begin{theorem}\label{theorem: main from BBR}
\normalfont
	{\bf \cite[Corollary 2.4]{BBR15}.}
	For fixed $k>0$ and a monic polynomial $f\in\FF_{\!q}[t]$ satisfying $\text{deg}_{\ \!}f=k$ and $\varepsilon>0$, define
	$$
	\pi_{q}\big(I(f,\varepsilon)\big)
	\ \ \Def\ \ 
	\#\big\{h\in I(f,\varepsilon):h\mbox{ is a prime polynomial}\big\}.
	$$
Then the asymptotic formula
	\begin{equation}\label{equation: asymptotic formula from BBR}
	\pi_q\left(I(f,\varepsilon) \right)\sim \frac{\#I(f,\varepsilon)}{k}
	\ \ \ \ \ \ \mbox{as}\ \ \ \ \ \ 
	q\to\infty
	\end{equation}
holds uniformly for all monic $f\in\FF_q[t]$ of degree $k$ and all
	\begin{equation}\label{equation: main from BBR conditions}
	\varepsilon_0\ \le\ \varepsilon\ <\ 1,
	\ \ \ \ \ \ \mbox{where}\ \ \ \ \ \ 
	\varepsilon_0= \begin{cases}
	\ \frac{3}{k} & \mbox{if $\text{char}_{\ \!}\FF_{\!q}=2$ and $f'$ is constant;}\\[4pt] 
	\ \frac{2}{k} & \mbox{if $\text{char}_{\ \!}\FF_{\!q}\ne 2$ or $f'$ is non-constant;}\\[4pt]
	\ \frac{1}{k} & \mbox{if $\text{char}_{\ \!}\FF_{\!q}\nmid k(k-1)$.}
	\end{cases}
	\end{equation}
\end{theorem}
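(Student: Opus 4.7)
The plan is to convert the counting problem into an étale-cover point count over $\mathbb{A}^{m+1}_{\mathbb{F}_q}$ via the function-field Chebotarev density theorem, and then reduce to the assertion that a certain ``universal'' polynomial has Galois group $S_k$. Set $m := \lfloor \varepsilon k \rfloor$ and parameterize $I(f,\varepsilon) = f + \mathbb{F}_q[t]^{\leq m}$ by the affine space $\mathbb{A}^{m+1}_{\mathbb{F}_q}$ with coordinates $a_0,\ldots,a_m$. Form the universal polynomial
$$
F(t) \;=\; f(t) + a_m t^m + a_{m-1} t^{m-1} + \cdots + a_0 \;\in\; K[t], \qquad K = \mathbb{F}_q(a_0,\ldots,a_m),
$$
let $L/K$ be its splitting field, and set $G := \Gal(L/K)$, viewed as a subgroup of $S_k$ via its permutation action on the $k$ roots. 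The extension $L/K$ spreads out to an étale $G$-torsor over a dense open $U \subseteq \mathbb{A}^{m+1}_{\mathbb{F}_q}$, and Chebotarev for function fields combined with Lang--Weil estimates yields
$$
\#\{\underline{a}\in U(\mathbb{F}_q) : F(\underline{a},t)\text{ is irreducible of degree }k\} \;=\; \frac{\#\{k\text{-cycles in }G\}}{\#G}\,q^{m+1} + O_k(q^{m+1/2}).
$$
Accounting for the $O_k(q^m)$ discrepancy between $U(\mathbb{F}_q)$ and $\mathbb{A}^{m+1}(\mathbb{F}_q)$, the theorem reduces to proving $G = S_k$, as this gives main term $\tfrac{(k-1)!}{k!}q^{m+1} = q^{m+1}/k = \#I(f,\varepsilon)/k$.

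To verify $G = S_k$ uniformly in $f$ under the hypotheses of \eqref{equation: main from BBR conditions}, I would establish three properties. (i) \emph{Transitivity}: show $F(t)$ is irreducible over $K$, which after specializing $a_1=\cdots=a_m=0$ reduces to irreducibility of $f(t)+a_0$ over $\mathbb{F}_q(a_0)$; the latter is automatic since $f(t)-s$ is linear in $s$ with content $1$ in $\overline{\mathbb{F}_q}[t,s]$. (ii) \emph{A transposition in $G$}: produce a one-parameter specialization of the $a_i$ at which $F(t)$ acquires precisely one double root, with $k-2$ simple roots and with tame ramification. The inertia at such a place is then generated by a transposition in $G$. (iii) \emph{Primitivity}: show that the ``two-root'' correspondence $\{(t_1,t_2) : F(t_1)=F(t_2)=0,\ t_1\neq t_2\}$ is geometrically irreducible over $K$, which forces $G$ to act $2$-transitively, and in particular primitively, on the roots. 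By a classical theorem of Jordan, a primitive subgroup of $S_k$ containing a transposition is all of $S_k$.

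The heart and main obstacle of the argument is step (ii): one must produce a specialization of $(a_1,\ldots,a_m)$ at which the discriminant of $F$ (a polynomial in $a_0,\ldots,a_m$) vanishes simply, avoiding both wild ramification and higher-order degenerations. This is exactly where the trichotomy in \eqref{equation: main from BBR conditions} appears. If $\mathrm{char}\,\mathbb{F}_q \nmid k(k-1)$, the tame-ramification analysis of the generic polynomial works with a single free parameter, so $m\geq 1$ suffices. When $\mathrm{char}\,\mathbb{F}_q \mid k(k-1)$ but we are not in the wild case, an extra parameter is required to perturb $f$ into general position, forcing $m\geq 2$. In characteristic $2$ with $f'$ constant, the derivative degenerates globally and wild ramification obstructs producing a transposition from $m\leq 2$ parameters, so $m\geq 3$ is required. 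With $G = S_k$ established, the Chebotarev count yields
$$
\pi_q(I(f,\varepsilon)) \;=\; \frac{\#I(f,\varepsilon)}{k}\bigl(1 + O_k(q^{-1/2})\bigr)
$$
uniformly over admissible $f$, which gives \eqref{equation: asymptotic formula from BBR} as $q\to\infty$.
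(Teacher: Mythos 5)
Your strategy is exactly the one used in \cite{BBR15} and in this paper's higher-genus generalization of it: parameterize the interval by $\A^{\!m+1}$, pass to the universal polynomial $\mathcal{F}_{\!\mathbf{A}}$, reduce the point count via the explicit function-field Chebotarev theorem and Lang--Weil to the single assertion $\Gal\big(\mathcal{F}_{\!\mathbf{A}},\overline{\FF_{\!q}}(\mathbf{A})\big)\cong S_{k}$, and prove that assertion by exhibiting transitivity, $2$-transitivity, and a transposition (your ``primitive $+$ transposition'' via Jordan is interchangeable with the paper's use of Serre's criterion in Lemma \ref{lemma: serre S_k}). Steps (i) and (iii) are fine as sketched: the specialization $a_{1}=\cdots=a_{m}=0$ together with Gauss's lemma does give irreducibility, and geometric irreducibility of the two-root correspondence is the standard route to double transitivity.

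The genuine gap is step (ii), which you correctly identify as the heart of the matter but then replace with a heuristic about ``free parameters.'' All of the content of the trichotomy \eqref{equation: main from BBR conditions} is earned there, and three separate facts must actually be proved, none of which is automatic. First, no place of the parameter space carries a fiber with a root of multiplicity $\ge 3$; i.e.\ the ramification of $t\mapsto \mathcal{F}_{\!\mathbf{A}}-A_{0}$ is everywhere of order $\le 1$ on the affine locus (the analogue of Proposition \ref{proposition: at most double roots}). Second, a ramification point must exist at finite distance at all: this requires bounding the possibly wild ramification over $t=\infty$ and comparing with Riemann--Hurwitz, and it is here --- not in any ``tameness of the generic polynomial'' --- that the condition $\text{char}_{\ \!}\FF_{\!q}\mid k(k-1)$ versus $\nmid$ actually enters (cf.\ Proposition \ref{proposition: existence of a ramified point}). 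Third, distinct critical points must have distinct critical values, so that the inertia element fixes all but two roots and is genuinely a transposition rather than a product of several; this is the determinant computation in which the parameters $a_{1},a_{2},a_{3}$ are consumed and which forces $m\ge 3$ in general, $m\ge 2$ in characteristic $2$ with $f'$ nonconstant (cf.\ Proposition \ref{proposition: system of equations has no solution}). Without the first and third facts the inertia element you produce need not be a transposition, and without the second it need not exist; so as written the proposal does not yet establish the stated values of $\varepsilon_{0}$, only the shape of the argument that would.
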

	
\end{subsection}

\begin{subsection}{Short interval conjectures over number fields}\label{subsec: number fields}
	One can extend Conjecture \ref{conj: prime numbers in SI} to arbitrary number fields. However, because the relevant notions in $\QQ$ have several competing generalizations to number fields larger than $\QQ$, there are several competing generalizations of Conjecture \ref{conj: prime numbers in SI}. If we let $K$ be an algebraic number field of degree $n$ over $\QQ$, with ring of integers $\mathcal{O}_{K}$, then each prime $\mathfrak{a}\subset\mathcal{O}_{K}$ comes with a norm $N_{K}(\mathfrak{a})\Def\#(\mathcal{O}_{K}/\mathfrak{a})$. The norm of an element $a\in\mathcal{O}_{K}$ is by definition the norm of the ideal that $a$ generates. We have both a prime ideal and a principal prime ideal counting function:
	$$
	\begin{array}{rcl}
	\pi_{K}(x)
	&
	\!\!=\!\!
	&
	\# \{ \mbox{prime\ ideals\ }\mathfrak{p}\subset\mathcal{O}_{K} : \ \ 2<N_{K}(\mathfrak{p})\leq x\};
	\\[6pt]
	\pi_{K,\text{prin}}(x)
	&
	\!\!=\!\!
	&
	\# \{ \mbox{principal prime\ ideals\ }(a)\subset\mathcal{O}_{K} :\ \   2<N_{K}(a)\leq x\}.
	\end{array}
	$$
Landau's Prime Ideal Theorem (PIT) \cite{LandauPIT} states that
	\begin{equation}
	\pi_{K}(x)\ \sim \frac{x}{\log x}
	\ \ \ \ \ \ \mbox{as}\ \ \ \ \ \ 
	x\to\infty.
	\end{equation}
Letting $h_{K}$ denote the class number of $K$, the Principal PIT \cite[\S 7.2, Corollary~4]{Nark} states that
	\begin{equation}
	\pi_{K,\text{prin}}(x)\ \sim\ \frac{1}{h_{K}}\cdot\frac{x}{\log x}
	\ \ \ \ \ \ \mbox{as}\ \ \ \ \ \ 
	x\to\infty.
	\end{equation}
As before, we can attempt to refine these density theorems by considering any real valued function $\Phi(x)$, with $0<\Phi(x)<x$, the corresponding (real) intervals $I(x,\Phi)=[x-\Phi(x),x+\Phi(x)]$ and the prime ideal counting function
	$$
	\pi_{K}\big(I(x,\Phi)\big)
	\ =\ 
	\# \big\{\mbox{primes }\mathfrak{p}\subset\mathcal{O}_{K}:x-\Phi(x)\le N_{K}(\mathfrak{p})\le x+\Phi(x)\big\}.
	$$
The naive guess about the asymptotic behavior of $\pi_{K}\big(I(x,\Phi)\big)$ is that
	\begin{equation}\label{eq:PIT interval}
	\pi_{K}\big(I(x,\Phi)\big)\ \sim\ \frac{\#I(x,\Phi) }{\log x}=\ \frac{\ 2\ \!\Phi(x)\ }{\log x}
	\ \ \ \ \ \ \mbox{as}\ \ \ \ \ \ 
	x\to \infty.
	\end{equation}
When $\Phi(x)\sim c x$ for fixed $0<c<1$, formula \eqref{eq:PIT interval} follows directly from the PIT. Balog and Ono \cite{BO}, using formulas for the prime ideal counting function due to Lagarias and Odlyzko \cite{LO} and zero density estimates for Dedekind zeta-functions due to Heath-Brown \cite{HB77} and Mitsui \cite{mitsui1968}, show that formula \eqref{eq:PIT interval} holds for $x^{1-\mbox{{\smaller\smaller\smaller\smaller $\frac{1}{c}$}}+\varepsilon}\leq \Phi(x) \leq x$. Here one may take $c=8/3$ if $[K:\QQ]=2$, and one can take $c=[K:\QQ]$ if the degree of the extension is at least $3$. Assuming the Riemann Hypothesis for the Dedekind zeta function $\zeta_{K}(s)$, Greni\'{e}, Molteni, and Perelli \cite{Perelli} show that \eqref{eq:PIT interval} holds for all $\Phi(x) = \big(n\log x + \log |\text{disc}(K)|\big)\sqrt{x}$. 

	In a general number field, the norm $N_{K}(a)$ of an element $a\in\mathcal{O}_{K}$ is not equal to the absolute value of $a$ at a single infinite place of $K$. Likewise, given an element $b\in\mathcal{O}_{K}$ with $x\Def N_{K}(b)$, and given $0<\varepsilon<1$, the set of all $a\in\mathcal{O}_{K}$ satisfying $\big|N_{K}(a)-x\big|\le x^{\varepsilon}$ (with $|\cdot |$ the absolute value in $\RR$) is not necessarily the same as the set of all $a\in\mathcal{O}_{K}$ satisfying $N_{K}(a-b)\le x^{\varepsilon}$. This ambiguity in generalizing the basic quantities in Conjecture \ref{conj: prime numbers in SI} gives us at least two distinct conjectures that can be seen as extensions Conjecture \ref{conj: prime numbers in SI} to an arbitrary number field $K$:
	
\begin{conjecture}\label{conj: prime numbers in SI, number field 3}
\normalfont
	Let $S=\{\mbox{infinite places of }K\}$. There exists some constant $c$ such that for each real vector $\varepsilon_{S}=(\varepsilon_{\mathfrak{p}})_{\mathfrak{p}\in S}$ in $(0,1)^{S}\subset\RR^{S}$, the count
	$$
	\pi_{K,\text{prin}}\big(I(b,\varepsilon_{S})\big)
	\ =\ 
	\#\big\{a\in\mathcal{O}_{K}:|a-b|_{\mathfrak{p}}\le |b|^{\varepsilon_{\mathfrak{p}}}_{\mathfrak{p}}\mbox{ for each }\mathfrak{p}\in S,\mbox{ and }(a)\subset\mathcal{O}_{K}\mbox{ is prime}\big\}
	$$
satisfies the asymptotic formula
	\begin{equation}
	\pi_{K,\text{prin}}\big(I(b,\varepsilon_{S})\big)
	\ \ \sim\ \ 
	c\cdot\frac{\#\{a\in\mathcal{O}_{K}:|a-b|_{\mathfrak{p}}\le |b|^{\varepsilon_{\mathfrak{p}}}_{\mathfrak{p}}\mbox{ for all }\mathfrak{p}\in S\}}{\log N_{\!K\!}(b)}
	\ \ \ \ \ \ \mbox{as}\ \ \ \ \ \ 
	N_{\!K\!}(b)\to\infty.
	\end{equation}
\end{conjecture}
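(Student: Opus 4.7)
Since Conjecture \ref{conj: prime numbers in SI, number field 3} specializes to Conjecture \ref{conj: prime numbers in SI} when $K=\QQ$, and that conjecture is only known unconditionally for $\varepsilon\ge 7/12$ by Heath-Brown \cite{HeathBrown1988Crelle}, any proposal must be regarded as contingent on substantial analytic input. My plan is to rewrite the counting function $\pi_{K,\text{prin}}\big(I(b,\varepsilon_S)\big)$ in terms of Hecke $L$-functions. First, I would detect the archimedean short-interval condition $|a-b|_{\mathfrak{p}}\le|b|_{\mathfrak{p}}^{\varepsilon_{\mathfrak{p}}}$ at each $\mathfrak{p}\in S$ by doing Fourier analysis on the unit-quotient $\big(\prod_{\mathfrak{p}\in S}K_{\mathfrak{p}}^{\times}\big)/\mathcal{O}_K^\times$, which introduces a family of Gr\"ossencharacters whose analytic conductors grow as the $\varepsilon_{\mathfrak{p}}$ shrink. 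Next, I would detect the principal-ideal condition by summing over the $h_K$ characters of the class group. Together these reduce the problem to sums of the shape $\sum_{\chi}\sum_{\mathfrak{a}\text{ prime}}\chi(\mathfrak{a})\,W(\mathfrak{a})$, where $W$ is a smooth weight approximating the characteristic function of the short interval.

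I would then carry out the usual contour-shifting argument against the logarithmic derivatives of $\zeta_K(s)$ and the twisted Hecke $L$-functions $L(s,\chi)$. The main term comes from the pole of $\zeta_K$ at $s=1$ picked up by the trivial character, and delivers the predicted density $c\cdot\#I(b,\varepsilon_S)/\log N_K(b)$, with the constant $c$ being built from the factor $1/h_K$ familiar from the Principal PIT together with the archimedean volume normalizations. Bounding the character-twisted contributions requires zero-density estimates for Hecke $L$-functions in the critical strip, continuing the approach of Balog-Ono \cite{BO} which combined the explicit formulae of Lagarias-Odlyzko \cite{LO} with the zero-density technology of Heath-Brown \cite{HB77} and Mitsui \cite{mitsui1968}.

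The principal obstacle will be uniformity in small $\varepsilon_{\mathfrak{p}}$: shrinking the short interval raises the analytic conductors of the Gr\"ossencharacters detecting it, and present zero-density technology fails to supply enough cancellation once $\min_{\mathfrak{p}}\varepsilon_{\mathfrak{p}}$ drops below the Heath-Brown/Balog-Ono threshold. Moreover, Maier's phenomenon over $\ZZ$ warns that one cannot expect the asymptotic to persist when the $\varepsilon_{\mathfrak{p}}$ become comparable to a power of $\log\log N_K(b)/\log N_K(b)$, so the precise range of $\varepsilon_S$ claimed in the conjecture is itself a delicate matter. A genuinely unconditional proof in the full range $\varepsilon_S\in(0,1)^S$ would therefore appear to require either an unconditional breakthrough beyond current zero-density methods, or a geometric substitute for the Riemann hypothesis for Hecke $L$-functions of the sort that our function-field Theorem \ref{theorem: main theorem} supplies via Deligne's Weil bounds in the large-$q$ limit.
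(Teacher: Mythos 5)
This statement is labeled as a \emph{conjecture} in the paper, and the paper offers no proof of it: it is presented precisely as an unresolved number-field generalization of Conjecture \ref{conj: prime numbers in SI}, which is itself open (known unconditionally only for $\varepsilon\ge 7/12$ by Heath-Brown, and conditionally on RH only down to $\varepsilon$ slightly above $1/2$). The paper's actual contribution is a function-field \emph{analogue} (Theorem \ref{theorem: main theorem}), proved by entirely different, geometric means in the large-$q$ limit; it does not establish, and does not claim to establish, the number-field statement. So there is no paper proof to compare yours against.

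Your proposal is, by your own admission, not a proof but a research program, and the gap is exactly where you locate it: after the reduction to Gr\"ossencharacter sums and contour shifting, one needs cancellation in the character-twisted terms uniformly as $\min_{\mathfrak{p}}\varepsilon_{\mathfrak{p}}\to 0^{+}$, and no existing zero-density estimate for Hecke $L$-functions supplies this below the Balog--Ono range $\Phi(x)\ge x^{1-1/c+\varepsilon}$. Even under GRH for all the relevant $L$-functions one only reaches exponents near $1/2$, not the full range $(0,1)^{S}$ asserted by the conjecture. The outline of the reduction itself (detecting the archimedean box by harmonic analysis on $\bigl(\prod_{\mathfrak{p}\in S}K_{\mathfrak{p}}^{\times}\bigr)/\mathcal{O}_{K}^{\times}$, detecting principality by class-group characters, extracting the main term with $c$ built from $1/h_{K}$ and archimedean volumes) is sound and consistent with how the paper motivates the conjecture via the Principal Prime Ideal Theorem and the Balog--Ono results; but it does not close the gap, and one should not present it as a proof. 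If your goal is to say something unconditional, the correct move is the one the paper makes: pass to the function-field side, where the role of the missing zero-density input is played by the Lang--Weil estimates and the explicit computation of the Galois group $\text{Gal}\bigl(\mathcal{F}_{\!\mathbf{A}},\FF_{\!q}(\mathbf{A})\bigr)\cong S_{k}$.
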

\begin{conjecture}\label{conj: prime numbers in SI, number field 1}
	\normalfont
	There exists some constant $c$ such that for each $0<\varepsilon<1$, the count
	$$
	\pi_{K,\text{prin}}\big(I(x,\varepsilon)\big)
	\ =\ 
	\#\big\{\mbox{principal prime ideals }(a)\subset\mathcal{O}_{K}:\ \ x-x^{\varepsilon}<N_{K}(a)\le x+x^{\varepsilon}\big\}.
	$$
	satisfies the asymptotic formula
	\begin{equation}
	\pi_{K,\text{prin}}\big(I(x,\varepsilon)\big)
	\ \ \sim\ \ 
	c\cdot\frac{\#I(x,\varepsilon)}{\log x}
	\ \ =\ \ 
	c\cdot\frac{\ 2\ x^{\varepsilon}}{\log x}
	\ \ \ \ \ \ \mbox{as}\ \ \ \ \ \ 
	x\to\infty.
	\end{equation}
\end{conjecture}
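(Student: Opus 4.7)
The plan is to adapt the standard analytic strategy used to approach Conjecture \ref{conj: prime numbers in SI} over $\ZZ$ to the Dedekind domain $\mathcal{O}_{K}$. To single out principal ideals, I would first use orthogonality of characters of the ideal class group $\text{Cl}(\mathcal{O}_{K})$:
\[
\pi_{K,\text{prin}}\big(I(x,\varepsilon)\big)
\ =\
\frac{1}{h_{K}}\sum_{\chi}\ \sum_{\substack{\mathfrak{p}\subset\mathcal{O}_{K}\text{ prime}\\ x-x^{\varepsilon}<N_{K}(\mathfrak{p})\le x+x^{\varepsilon}}}\chi(\mathfrak{p}),
\]
where $\chi$ ranges over class-group characters, each extending to a Hecke character with associated $L$-function $L(s,\chi)$. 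The trivial character picks out the full count of prime ideals in the short interval and contributes the main term through the pole of $\zeta_{K}(s)=L(s,\mathbf{1})$ at $s=1$; the non-trivial characters are expected to contribute only to the error.

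Second, I would introduce the Chebyshev-type sum
\[
\psi_{K}(y,\chi)\ \Def\ \sum_{N_{K}(\mathfrak{a})\le y}\chi(\mathfrak{a})\Lambda_{K}(\mathfrak{a}),
\]
apply Perron's formula to $-L'(s,\chi)/L(s,\chi)$, and shift the contour past $s=1$. This yields
\[
\psi_{K}(x+x^{\varepsilon},\chi)-\psi_{K}(x-x^{\varepsilon},\chi)
\ =\
\delta_{\chi=\mathbf{1}}\cdot 2x^{\varepsilon}\ -\ \sum_{\rho_{\chi}}\frac{(x+x^{\varepsilon})^{\rho_{\chi}}-(x-x^{\varepsilon})^{\rho_{\chi}}}{\rho_{\chi}}\ +\ \text{(lower order)},
\]
where the sum ranges over non-trivial zeros of $L(s,\chi)$; partial summation then converts the Chebyshev difference into the desired prime count, producing the $\log x$ in the denominator. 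One would truncate the sum over zeros at a height $T$ and estimate the remainder via zero-density bounds of Lagarias--Odlyzko \cite{LO}, Heath-Brown \cite{HB77}, and Mitsui \cite{mitsui1968}, following the lines of Balog--Ono \cite{BO} and Greni\'e--Molteni--Perelli \cite{Perelli}. Adapting Heath-Brown's sieve method from $\ZZ$ should give an unconditional result in the range $\varepsilon\ge\frac{7}{12}+o(1)$, while GRH for every $L(s,\chi)$ secures $\varepsilon>\frac{1}{2}+o(1)$.

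The hard part — and the reason Conjecture \ref{conj: prime numbers in SI, number field 1} remains open — is the range $\varepsilon\le\frac{1}{2}$. Even under GRH, which places every $\rho_{\chi}$ on $\text{Re}(s)=\frac{1}{2}$, the mean-value estimate gives each zero a contribution of size $\ll x^{\varepsilon-1/2}$, and when combined with the bound $\#\{\rho_{\chi}:|\text{Im}(\rho_{\chi})|\le T\}\ll T\log T$ the total error matches the main term $\sim 2x^{\varepsilon}$ precisely at $\varepsilon=\frac{1}{2}$. Pushing below would require extracting genuine cancellation among the $\rho_{\chi}$ — inputs of the sort conjectured by pair-correlation or Montgomery-type heuristics — which lies beyond current analytic techniques even over $\QQ$. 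This obstruction is precisely what the function-field analogue in the present paper circumvents: in the large-$q$ limit, the Weil conjectures provide unconditional control on the zeros of the relevant $L$-functions.
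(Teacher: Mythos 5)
The statement here is Conjecture \ref{conj: prime numbers in SI, number field 1}. The paper states it without proof: it is an open conjecture, already open in the base case $K=\QQ$ (where, with $c=1$, it reduces to Conjecture \ref{conj: prime numbers in SI}), and it appears only to motivate the function-field analogue that the paper actually proves (Theorem \ref{theorem: main theorem}). So there is no proof in the paper to compare yours against; the only relevant question is whether your argument establishes the conjecture. It does not, and you say so yourself. The machinery you outline --- class-group character orthogonality, Perron's formula applied to $-L'(s,\chi)/L(s,\chi)$ for the associated Hecke $L$-functions, truncation of the explicit formula, and the zero-density estimates of Lagarias--Odlyzko, Heath-Brown, and Mitsui as deployed by Balog--Ono --- yields the asymptotic only for $\varepsilon$ bounded away from $0$: unconditionally roughly $\varepsilon\ge 1-\frac{1}{c'}$ for a constant $c'$ depending on $[K:\QQ]$ as in \cite{BO}, and $\varepsilon>\frac{1}{2}$ under GRH as in \cite{Perelli}. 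The conjecture asserts the asymptotic for \emph{every} $0<\varepsilon<1$, and your own accounting shows that even under GRH the aggregate contribution of the zeros is of the same order as the main term precisely at $\varepsilon=\frac{1}{2}$, with no known source of cancellation below that threshold. That is the gap, and it is not a repairable detail: it is the entire content of the conjecture.

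Two positive remarks. Your orthogonality step over characters of $\text{Cl}(\mathcal{O}_{K})$ is the right way to isolate principal ideals, and it correctly predicts the constant $c=1/h_{K}$, consistent with the Principal Prime Ideal Theorem quoted in \S\ref{subsec: number fields}. And your closing observation --- that the function-field setting evades this obstruction because the large-$q$ limit supplies unconditional control of the relevant zeros, which the paper accesses through Lang--Weil and a Chebotarev-type argument rather than through zero-density estimates --- is exactly the point of the paper. But the proposal should be read as a survey of known partial results together with a diagnosis of the obstruction, not as a proof of the stated conjecture.
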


\begin{remark}
	For $K=\QQ$, Conjectures \ref{conj: prime numbers in SI, number field 3} and \ref{conj: prime numbers in SI, number field 1} both recover Conjecture \ref{conj: prime numbers in SI} if $c=1$.
\end{remark}

\end{subsection}
%%%%%%%%%%%%%%%%%%%%%%%%%%%%%%%%%%%%%%

%%%%%%%%%%%%%%%%%%%%%%%%%%%%%%%%%%%%%%%
	
\begin{subsection}{Main result: short intervals on arbitrary curves over $\pmb{\FF_{\!q}}$}\label{subsection: short intervals on arbitrary curves}
	Let $C$ be a smooth projective geometrically irreducible curve over $\FF_q$. As shown in \cite[Theorem 5.12]{Rosen}, the natural analogue of the PNT holds on $C$, which is to say that the counting function
	$$
	\pi_{C}(k)\ \Def\ \# \{ P \mbox{ a prime divisor of } C : \deg(P)=k\}
	$$
satisfies the asymptotic formula
	$$
	\pi_{C}(k) \sim \frac{q^k}{k}
	\ \ \ \ \ \ \mbox{as}\ \ \ \ \ \ 
	q^k\to \infty.
	$$
One can formulate analogues of each of the Conjectures \ref{conj: prime numbers in SI, number field 3} and \ref{conj: prime numbers in SI, number field 1} on $C$. In the present paper, we focus our attention to the analogue of Conjecture \ref{conj: prime numbers in SI, number field 3} in the large $q$ limit. We intend to address analogues of Conjecture \ref{conj: prime numbers in SI, number field 1} in a future paper.
	
	On $C$, the natural analogue of the ``short interval" implicit in Conjecture \ref{conj: prime numbers in SI, number field 3} is the following set:
	
\begin{definition}\label{definition: general SI}\normalfont
		 Let $E=m_1\mathfrak{p}_{1}+\cdots +m_s\mathfrak{p}_{s}$ be an effective divisor on $C$, and let $f$ be a regular function on the complement of $E$. The {\em interval} ({\em of size $E$ around $f$}) is the set
			\begin{equation}\label{equation: polynomial-like case}
			\ \ \ \ \ \ \ \ \ 
			\begin{array}{rcl}
			I(f,E)
			&
			\!\!\Def\!\!
			&
			\left\{\!\!
			\begin{array}{c}
			\mbox{regular functions $h$ on }C\!\smallsetminus\! \text{supp}(E) \ \text{such}
			\\
			\mbox{that }\nu_{\mathfrak{p}_{i}}(h-f)\ge -m_{i}\ \mbox{ for all }\ 1\le i\le s
			\end{array}
			\!\!\right\}
			\\[15pt]
			&
			\!\!=\!\!
			&f+H^{0}\big(C,\mathscr{O}(E)\big),
			\end{array}
			\end{equation}
where $H^{0}\big(C,\mathscr{O}(E)\big)$ is the space of regular functions on $C\!\smallsetminus\!\{\mathfrak{p}_{1},\dots,\mathfrak{p}_{s}\}$ with a pole of order at most $m_{i}$ at each point $\mathfrak{p}_{i}$, for $1\le i\le s$.
	
	The interval $I(f,E)$ is a \textit{short interval} if the order of the pole of $f$ at each $\mathfrak{p}_{i}$ is strictly greater than $m_i$.
\end{definition} 
	
\begin{remark}
	When $C=\PP^{1}$ and $E=m\ \infty$, for $m> 0$, Definitions \ref{definition: general SI} and \ref{definition: polynomial SI} coincide.
	
	The value that serves as our prime count in any short interval $I(f,E)$ is
	\begin{equation}
	\pi_{C}(I(f,E))
	\ \Def\ 
	\#\left\{\!\!
	\begin{array}{c}
	\text{$h\in I(f,E)$ such that $h$ generates a}
	\\
	\text{prime ideal in the ring of regular}
	\\ 
	\text{functions on }  C\!\smallsetminus\! \text{supp}(E)
	\end{array}
	\!\!\right\}.
	\end{equation}
The central result of the present paper is the following theorem, which establishes a function field analogue of Conjecture \ref{conj: prime numbers in SI} and its generalization \ref{conj: prime numbers in SI, number field 3}.  In addition, this result extends Theorem \ref{theorem: main from BBR} to curves of arbitrary genus over $\FF_{\!q}$:
\end{remark}

\begin{Th}\label{theorem: main theorem}
		\normalfont
		Let $C$ be a smooth projective geometrically irreducible curve of genus $g$ over $\FF_{\!q}$. Fix a positive integer $k>0$. Let $E$ be an effective divisor on $C$, and let $f$ be a regular function on $C\!\smallsetminus\! \text{supp}(E)$ such that the sum of the orders of all poles of $f$ is equal to $k$, and such that $I(f,E)$ is a short interval. Assume that either
		\begin{itemize}
			\item[{\bf (i)}] \vskip .1cm
			$E\geq 3E_0$ for some effective divisor $E_0$ with $\text{deg}_{\ \!}E_{0}\geq 2g+1$, or
			\item[{\bf (ii)}] \vskip .2cm
			$\text{char}_{\ \!}\FF_{\!q}= 2$, $E\geq 2E_0$  for some effective divisor $E_0$ with $\text{deg}_{\ \!}E_{0}\geq 2g+1$, such that the differential $df$ vanishes on a nonempty finite subset of $C\!\smallsetminus\! \text{supp}(E)$.\vskip .2cm
		\end{itemize}
	Then
		\begin{equation}\label{equation: asymptotic formula in main theorem}
		\pi_{C}\big(I(f,E)\big)
		\ =\ 
		\frac{\#I(f,E)}{k}\left( 1+O(q^{-1/2})    \right)
		\end{equation}
where the implied constant in the error term $O(q^{-1/2})$ depends only on $k$ and $g$.
	\end{Th}

\begin{remark}
	To establish Theorem \ref{theorem: main theorem}, we prove a result (Theorem \ref{theorem: main theorem for arbitrary partitions}) that is stronger than Theorem \ref{theorem: main theorem}. For any partition type of the set $\{1,2,\dots,k\}$, we provide an asymptotic count of rational functions $h\in I(f,E)$ whose associated principal divisor on $C\!\smallsetminus\! \text{supp}(E)$ has that partition type.
\end{remark}

\begin{remark}
	Note that since $I(f,E)$ is a short interval, all poles of $f$ lie in $\text{supp}(E)$, and for each $\mathfrak{p}\in\text{supp}(E)$, the order $\text{ord}_{\mathfrak{p}}(f)$ of each pole of $f$ at $\mathfrak{p}$ is strictly greater than the order $m_{\mathfrak{p}}$ of $E$ at $\mathfrak{p}$. Because $k$ is the sum of orders of all poles of $f$, Definition~\ref{definition: general SI} then implies that $k$ is equal to the sum $\sum_{\mathfrak{p}\in\text{supp}(E)}\text{max}\{m_{\mathfrak{p}},\text{ord}_{\mathfrak{p}}(f)\}$.
\end{remark}

\end{subsection}

%%%%%%%%%%%%%%%%%%%%%%%%%%%%%%%%%%%%%%%
	
\begin{subsection}{Outline of the paper}\label{subsection: outline of the paper}
	In broad outline, our strategy for proving Theorem \ref{theorem: main theorem} is similar to the strategy taken in \cite{BBR15}; the key insight of the present paper is that specific positivity hypotheses for divisors on $C$ allow one to adapt the steps of the original argument in \cite[\S 3 and \S 4]{BBR15} to a curve of arbitrary genus. In more detail, the outline of the paper is as follows.
	
	In \S\ref{sec: SI and Galois fields} we review the divisor theory and positivity conditions we will need. We introduce a variety parameterizing the elements of a short interval, and we use this variety to describe the generic element in a short interval. In \S\ref{section: Galois group of a generic element in a short interval} we explain how to associate a Galois group to the generic element. Most of the work in this section lies in showing that the Galois group is well defined. In \S\ref{section: calculation of the Galois group} we calculate the Galois group. Specifically, we show that it is isomorphic to a symmetric group by verifying the conditions in a particular characterization of the symmetric group. In \S\ref{section: counting argument} we use our knowledge of this Galois group, along with some basic facts about \'etale morphisms, to show that a key counting result in  \cite{BBR15}, originally stated only for the genus zero case, can be extended to a count in any genus. Finally, we use this count to prove Theorem \ref{theorem: main theorem} and its stronger form Theorem \ref{theorem: main theorem for arbitrary partitions}. Our arguments in \S\ref{section: counting argument} make crucial use of the Lang-Weil estimates \cite{LangWeil1954} and Bary-Soroker's Chebotarev-type result \cite[Proposition 2.2]{BarySoroker2012}.

\end{subsection}

%%%%%%%%%%%%%%%%%%%%%%%%%%%%%%%%%%%%%%%
	
\begin{subsection}{Acknowledgments}
	The authors would like to thank Lior Bary-Soroker and Michael Zieve for many conversations during our work on this paper that were crucial to its success. We also extend a warm thank you to Jeff Lagarias for comments on a draft of the paper and for his assistance in formulating prime density theorems in number fields. The exposition benefited greatly from an invitation to speak at the Palmetto Number Theory Series, held at the University of South Carolina and organized by Matthew Boylan, Michael Filaseta, and Frank Thorne, and from an invitation by Jordan Ellenberg to speak in the Number Theory Seminar at the University of Wisconsin\textendash Madison.
	
	We are especially thankful to Brian Conrad for looking closely at an earlier version of this paper, for pointing out a gap in our uniformity argument (now filled), for pointing out that we needed to prove what is now Proposition \ref{proposition: etale!}, and for suggesting that we use \cite[\S1, Lemma 1.5]{FK} to do so. 
	
	The authors conducted the research that lead to this paper while at the University of Michigan and while the second author was a visiting researcher at L'Institut des Hautes \'Etudes Scientifiques, at L'Institut Henri Poincar\'e, and at the Max Planck Institute for Mathematics. We thank all four institutions for their hospitality. The first author was partially supported by Michael Zieve's NSF grant DMS-1162181. Support for the second author came from NSF RTG grant DMS-0943832 and from Le Laboratoire d'Excellence CARMIN.

\end{subsection}
	
\end{section}

\vskip .5cm

%%%%%%%%%%%%%%%%%%%%%%%%%%%%%%%%%%%%%%
%%%%%%%%%%%%%%%%%%%%%%%%%%%%%%%%%%%%%%

\begin{section}{Short intervals on curves}\label{sec: SI and Galois fields}

	Fix a finite field $\FF_{\!q}$, an algebraic closure $\overline{\FF_{\!q}}\big/\FF_{\!q}$, and a smooth projective geometrically irreducible curve $C$ over $\FF_{\!q}$ of arithmetic genus $g$.

\begin{subsection}{Divisors on a curve}\label{subsec: Div on Curves}
	We make extensive use of the theory of divisors on algebraic varieties (see \cite[\S II.6]{Hartshorne} for instance). We briefly review the most pertinent aspects of the theory.
	
	By a {\em divisor on $C$}, we mean a Weil divisor on $C$.  We denote the support of a divisor $D$ by $\text{supp}(D)$, although we drop the distinction between $D$ and its support when it will not lead to confusion. For instance, we write $C\!\smallsetminus\! D$ instead of $C\!\smallsetminus\!\text{supp}(D)$. If $f$ is a rational function on $C$, we denote its associated principal divisor by $\text{div}(f)$. Given a divisor $D=\sum_{\mathfrak{p}\in C}m_{\mathfrak{p}}\ \mathfrak{p}$ on $C$, its {\em divisor of zeros} and {\em divisor of poles} are, respectively, the effective divisors
	$$
	D_{+}
	\ \ \Def\ 
	\sum_{m_{\mathfrak{p}}>0}m_{\mathfrak{p}}\ \mathfrak{p}
	\ \ \ \ \ \ \mbox{and}\ \ \ \ \ \ 
	D_{-}
	\ \ \Def\ 
	\sum_{m_{\mathfrak{p}}<0}-m_{\mathfrak{p}}\ \mathfrak{p}.
	$$
Note that $D=D_{+}-D_{-}$.
	
	Each divisor $D$ on $C$ determines a sheaf $\mathscr{O}(D)$ of rational functions on $C$ whose value at each open subset $U\subset C$ is
	$$
	\mathscr{O}(D)(U)
	\ \ \Def\ \ 
	\{0\}\sqcup\big\{f\in \FF_{\!q}(C)^{\times}:\Div(f)\big|_{U}\geq -D\big|_{U}\big\}.
	$$
For each $i\ge 0$, the $\FF_{\!q}$-vector space $H^{i}\big(C,\mathscr{O}(D)\big)$ is finite dimensional. We stress that according to the definition of $\mathscr{O}(D)$ that we use, the space of global sections $H^{0}\big(C,\mathscr{O}(D)\big)$ is canonically a space of rational functions on $C$.

	For each $m\ge 0$, fix homogeneous coordinates $x_{0},\dots,x_{m}$ on $\PP^{m}$. Let $V(x_{0})\subset\PP^{m}$ denote the hyperplane cut out by $x_{0}$. If $H^{0}\big(C,\mathscr{O}(D)\big)$ admits a basis $\{f_{0},\cdots,f_{m}\}$ such that at least one of the functions $f_{i}$ is non-vanishing at each $\mathfrak{p}\in C$, we say that $D$ is {\em basepoint free}. If $D$ is basepoint free, then our basis gives rise to a morphism
	\begin{equation}\label{eq: map from basepoint free complete linear system}
	\varphi=[f_{0}:\cdots:f_{m}]:C\longrightarrow \PP^{m}
	\end{equation}
into projective space of dimension
	$$
	m\ =\ \text{dim}_{\ \!}H^{0}\big(C,\mathscr{O}(D)\big)-1.
	$$
The divisor $D$ is {\em very ample} if $D$ is basepoint free and the morphism \eqref{eq: map from basepoint free complete linear system} is a closed embedding. Every divisor $D$ on $C$ satisfying $\text{deg}_{\ \!}D\ge 2g+1$ is very ample.

	If $E$ is an effective very ample divisor on $C$, then the basis $\{f_{0},\dots,f_{m}\}$ of $H^{0}\big(C,\mathscr{O}(E)\big)$ can be chosen so that $f_{0}=1$, with
	\begin{equation}\label{eq: special choice of basis}
	\varphi(\text{supp}(E))
	\ =\ 
	\varphi(C)\cap V(x_{0})
	\ \ \ \ \ \ \mbox{and}\ \ \ \ \ \ 
	C\!\smallsetminus\! E
	\ =\ 
	\varphi^{-1}\big(\PP^{m}\!\smallsetminus\! V(x_{0})\big).
	\end{equation}
In particular, if $E$ is an effective very ample divisor, then the open subscheme $C\!\smallsetminus\! E\subset C$ is affine, and its ring of regular functions is generated by the coordinates $x_{1},\dots,x_{m}$ on $\PP^{m}\!\smallsetminus\! V(x_{0})$. We consistently use the notation
	$$
	R
	\ \Def\ 
	\mbox{ring of regular functions on } C\!\smallsetminus\! E.
	$$
\begin{remark}
	Note that if $D_{0}$ and $D$ are divisors on $C$ satisfying $D_{0}\le D$, then we have a natural inclusion $H^{0}\big(C,\mathscr{O}(D_{0})\big)\subset H^{0}\big(C,\mathscr{O}(D)\big)$. Thus if $D_{0}$ is very ample and $D_{0}\le D$, then $D$ is also very ample.
\end{remark}
	
\begin{remark}
	Given a field extension $K/\FF_{\!q}$, each point $\mathfrak{p}$ in $C$ has a unique factorization $\mathfrak{p}=\mathfrak{q}^{e_{1}}_{1}\!\cdots\mathfrak{q}^{e_{n}}_{n}$ locally on $C_{K}=\text{Spec}(K)\!\times_{\text{Spec}(\FF_{\!q})}\!C$. The pullback of $E=\sum m_{\mathfrak{p}}\ \!\mathfrak{p}$ to $C_{K}$ is the divisor
	$$
	E_{K}
	\ \Def\ 
	\sum_{\mathfrak{p}\in C}
	\Big(\sum^{n}_{i=1}
	m_{\mathfrak{p}}\ \!e_{i}\ \mathfrak{q}_{i}
	\Big).
	$$
Note that $\deg{E}=\deg{E_{K}}$, and that if $E$ is effective, then $E_{K}$ is effective as well. The sheaf $\mathscr{O}(E)$ on $C$ pulls back to a sheaf $\mathscr{O}(E)_{K}$ on $C_{K}$, and we have a canonical isomorphism $\mathscr{O}(E_{K})\cong\mathscr{O}(E)_{K}$ \cite[\S9.4.2]{Harder}, thus $E_{K}$ is very ample whenever $E$ is.
\end{remark}
	
\end{subsection}

%%%%%%%%%%%%%%%%%%%%%%%%%%%%%%%%%%%%%%

\begin{subsection}{Generic element in a short interval}\label{subsection: generic element in a short interval}
	Let $E$ be an effective very ample divisor on $C$. Then following Definition \ref{definition: general SI}, each regular function $f$ on $C\!\smallsetminus\! E$ determines an interval
	$$
	I(f,E)
	\ =\ 
	f+H^{0}\big(C,\mathscr{O}(E)\big).
	$$
The fact that $f\in R$ and $H^{0}\big(C,\mathscr{O}(E)\big)\subset R$ implies that $I(f,E)\subset R$.
	
	Choose a basis $\{1,f_{1},\dots,f_{m}\}$ of $H^{0}\big(C,\mathscr{O}(E)\big)$ as in \S\ref{eq: special choice of basis}. Then we have a corresponding interpretation of
	$$
	\A^{\!m+1}
	\ \ \!\Def\ \ \!
	\text{Spec}_{\ \!}\FF_{\!q}[A_{0},\dots,A_{m}]
	\ =\ 
	\text{Spec}_{\ \!}\FF_{\!q}[\mathbf{A}]
	$$
as a variety parameterizing the functions in $I(f,E)$. Let $\FF_{\!q}(\mathbf{A})$ denote the field of rational functions $\FF_{\!q}(A_{0},\dots,A_{m})$, and define
	$$
	R[\mathbf{A}]
	\ \overset{{}_{\text{def}}}{=}\ 
	R\otimes_{\FF_{\!q}}\FF_{\!q}[\mathbf{A}]
	\ \ \ \ \ \ \ \ \ \ \mbox{and}\ \ \ \ \ \ \ \ \ 
	R(\mathbf{A})
	\ \overset{{}_{\text{def}}}{=}\ 
	R\otimes_{\FF_{\!q}}\FF_{\!q}(\mathbf{A}).
	$$
On the trivial family of curves $\A^{\!m+1}\times (C\!\smallsetminus\! E)=\sSpec{R[\mathbf{A}]}$, we have a regular function
	\begin{equation}\label{eq: formula for generic element of I}
	\mathcal{F}_{\!\mathbf{A}}
	\ \ \Def\ \ 
	f+A_{0}+\sum_{i=1}^{m}A_{i}f_{i}.
	\end{equation}
See Figure \ref{figure: main} for a depiction of the scheme $V(\mathcal{F}_{\!\mathbf{A}})$ cut out by $\mathcal{F}_{\!\mathbf{A}}$ inside the family $\mathbb{A}^{\!m+1}\!\times\! (C\!\smallsetminus\! E)$. The restriction of $\mathcal{F}_{\!\mathbf{A}}$ to the generic fiber $\sSpec{R(\mathbf{A})}$ of this trivial family $\A^{\!m+1}\times (C\!\smallsetminus\! E)$ describes the generic element of $I(f,E)$. If we denote $\FF_{\!q}$-rational points in $\A^{\!m+1}=\sSpec{\FF_{\!q}[\mathbf{A}]}$ as $(m+1)$-tuples $\mathbf{a}=(a_{0},\dots,a_{m})$, then for each $\mathbf{a}\in\A^{\!m+1}(\FF_{\!q})$, the restriction $\mathcal{F}_{\!\mathbf{a}}$ of \eqref{eq: formula for generic element of I} to the fiber $\{\mathbf{a}\}\times (C\!\smallsetminus\! E)\cong C\!\smallsetminus\! E$ is an element of $I(f,E)$.
	The value $\pi_{C}\big(I(f,E)\big)$ becomes the count of a particular set of $\FF_{\!q}$-rational points in $\A^{\!m+1}$:
	\begin{equation}\label{eq: prime count as point count}
	\pi_{C}\big(I(f,E)\big)
	\ =\ 
	\#\big\{\mathbf{a}\in\A^{\!m+1}(\FF_{\!q}):\mbox{the ideal }(\mathcal{F}_{\!\mathbf{a}})\subset R\mbox{ is prime}\big\}.
	\end{equation}

\begin{figure}
	$$
	\scalebox{.9}{$
	\begin{xy}
	(0,0)*+{
	\scalebox{.75}{
	$
	\includegraphics[scale=.17]{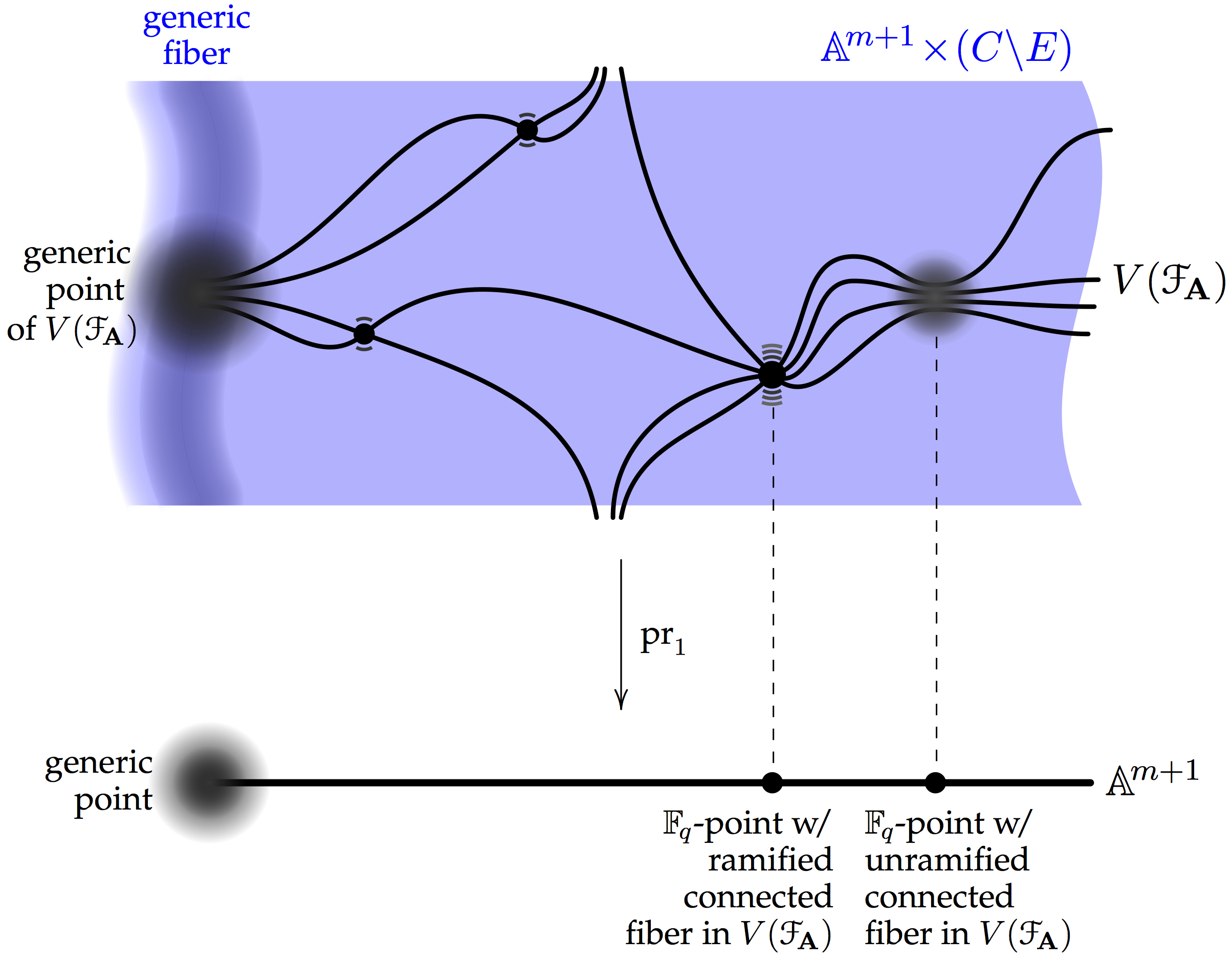}
	$}
	};
	\end{xy}
	$}
	$$
	
\caption{The scheme $V(\mathcal{F}_{\!\mathbf{A}})$ inside $\mathbb{A}^{\!m+1}\!\times\! (C\!\smallsetminus\! E)$. By Lemmas \ref{lemma: primality of (F_A)} and \ref{lemma: separability of (F_A)}, $V(\mathcal{F}_{\!\mathbf{A}})$ intersects the generic fiber at a single unramified point. Points $\mathbf{a}\in\A^{\!m+1}(\FF_{\!q})$ with unramified connected fiber in $V(\mathcal{F}_{\!\mathbf{A}})$ describe elements $\mathcal{F}_{\!\mathbf{a}}\in I(f,E)$ counted in $\pi_{C}\big(I(f,E)\big)$.}
\label{figure: main}
\end{figure}
	
\end{subsection}

\end{section}

\vskip .5cm

%%%%%%%%%%%%%%%%%%%%%%%%%%%%%%%%%%%%%%
%%%%%%%%%%%%%%%%%%%%%%%%%%%%%%%%%%%%%%

\begin{section}{Galois group of a generic element in a short interval}\label{section: Galois group of a generic element in a short interval}
	
	For any field $K$ and any irreducible separable polynomial $f\in K[t]$, the residue field $\kappa(f)\overset{{}_{\text{def}}}{=}K[t]/(f)$ admits a unique splitting field $\text{split}(f)$ inside any separable closure $\overline{K}=\overline{\kappa(f)}$. When we interpret $\kappa(f)$ as the field obtained by adjoining a single root of $f$ to $K$, it becomes natural to construct $\text{split}(f)$ as the field obtained by adjoining all roots of $f$ to $K$. We can also construct $\text{split}(f)$ without any explicit reference to roots of $f$. Indeed, $\text{split}(f)$ is the normal closure of $\kappa(f)$ inside $\overline{K}$ \cite[Theorem~2.9.5.(4)]{Roman}. This latter characterization of the splitting field generalizes to the higher genus setting and, as we demonstrate in the present section, allows us to define the Galois group of the generic element in short intervals on $C$.

%%%%%%%%%%%%%%%%%%%%%%%%%%%%%%%%%%%%%%

\begin{subsection}{The setting of \S\ref{section: Galois group of a generic element in a short interval} and \S\ref{section: calculation of the Galois group}}\label{subsection: setting}
	The following datum is to remain fixed throughout \S\ref{section: Galois group of a generic element in a short interval} and \S\ref{section: calculation of the Galois group}: Let $E$ be an effective very ample divisor on $C$, define $R$ to be the ring of regular functions on the affine curve $C\!\smallsetminus\! E$, and let $f\in R$ be a regular function on $C\!\smallsetminus\! E$ satisfying
	\begin{equation}\label{eq: important inequality at E}
	-\nu_{\mathfrak{p}}(f)
	\ >\ 
	\nu_{\mathfrak{p}}(E)
	\ \ \ \ \ \ \mbox{for all}\ \ \ \mathfrak{p}\in\text{supp}(E),
	\end{equation}
where $\nu_{\mathfrak{p}}(E)$ denotes the coefficient of $\mathfrak{p}$ in $E$. Define
	$$
	k
	\ \Def\ 
	\text{deg}\big(\text{div}(f)_{-}\big).
	$$
Note that the inequality \eqref{eq: important inequality at E} and the quantity $k$ are unaffected by base change along any field extension $K/\FF_{\!q}$.
Let $I(f,E)$ be the short interval defined by $f$ and $E$. Let $\mathcal{F}_{\!\mathbf{A}}$ be the generic element in $I(f,E)$ as defined in \eqref{eq: formula for generic element of I}. Fix an algebraic closure  $\overline{\FF_{\!q}(\mathbf{A})}$ such that $\overline{\FF_{\!q}}\subset\overline{\FF_{\!q}(\mathbf{A})}$.
	
\begin{remark}
	In the case where $g=0$ and $E$ is an effective divisor on $\PP^{1}$ supported at $\infty$, we have $H^{0}\big(\PP^{1},\mathscr{O}(E)\big)=\FF_{\!q}[t]^{\le m}$, where $m=\text{deg}_{\ \!}E$. The choice of a regular function $f$ amounts to the choice of a polynomial $f\in \FF_{\!q}[t]$, and $k=\text{deg}\big(\text{div}(f)_{-}\big)=\text{deg}_{\ \!}f$. Thus the inequality \eqref{eq: important inequality at E} reduces to the requirement $m<k$ that appears in the form ``$\varepsilon_{0}<1$" in Theorem \ref{theorem: main from BBR}.
\end{remark}
	
\begin{remark}
	In \S\ref{section: counting argument}, where we consider the asymptotic behavior of $I(f,E)$, we will allow $E$ and $f$ to vary subject to the constraint \eqref{eq: important inequality at E}.
\end{remark}

\end{subsection}

%%%%%%%%%%%%%%%%%%%%%%%%%%%%%%%%%%%%%%

\begin{subsection}{The splitting field and Galois group of a relative separable point}\label{subsection: splitting field of a point in a curve}	
	We can associate Galois groups to a large class of points in $C\!\smallsetminus\! E$ as follows:

\begin{definition}\label{definition: splitting field of a point}
\normalfont
	Let $K/\FF_{\!q}(\mathbf{A})$ be an algebraic extension. For a prime ideal $\mathfrak{P}$ in the ring $K\otimes_{\FF_{\!q}(\mathbf{A})}R(\mathbf{A})$, denote by $\kappa(\mathfrak{P})$ the residue field of $\mathfrak{P}$. The {\em splitting field of $\mathfrak{P}$} ({\em over $K$}), denoted $\text{split}(\mathfrak{P})$ or $\text{split}(\mathfrak{P}/K)$, is the normal closure of $\kappa(\mathfrak{P})$ in $\overline{\FF_{\!q}(\mathbf{A})}$.
	
	If the extension $\kappa(\mathfrak{P})/K$ is separable, then the {\em Galois group of $\mathfrak{P}$} is
	\begin{equation*}
	\text{Gal}\big(\mathfrak{P}\big/K\big)
	\ \Def\ 
	\text{Gal}\big(\text{split}(\mathfrak{P})/K\big).
	\end{equation*}
\end{definition}
	
\begin{remark}\label{remark: Galois action on split prime}
	For a prime ideal $\mathfrak{P}$ in $K\otimes_{\FF_{\!q}(\mathbf{A})}R(\mathbf{A})$, the fact that $\kappa(\mathfrak{P})/K$ is separable is equivalent to the statement that
	\begin{equation}\label{eq: prime splits over its splitting field}
	\text{split}(\mathfrak{P})\underset{K}{\otimes}\kappa(\mathfrak{P})
	\ \ \ \cong\ \ 
	\prod_{i=1}^{\text{deg}_{\ \!}\mathfrak{P}}\!\!
	\text{split}(\mathfrak{P})
	\end{equation}
(see \cite[Proposition 5.3.9, Definition 5.3.12 and Proposition 5.3.16.(1)]{Szamuely}).
	Since $K\otimes_{\FF_{\!q}}R(\mathbf{A})$ is a Dedekind domain, the isomorphism \eqref{eq: prime splits over its splitting field} is equivalent to the statement that in the ring $\text{split}(\mathfrak{P})\otimes_{\FF_{\!q}(\mathbf{A})}R(\mathbf{A})$, the ideal $\text{split}(\mathfrak{P})\otimes_{K}\mathfrak{P}$ has prime factorization
	\begin{equation}\label{eq: prime splitting as factorization}
	\text{split}(\mathfrak{P})\otimes_{K}\mathfrak{P}
	\ \ =\ \ 
	\mathfrak{Q}_{1}\cdots\mathfrak{Q}_{\deg{\mathfrak{P}}},
	\end{equation}
	where $\deg \mathfrak{Q}_{i}=1$ and $\kappa(\mathfrak{Q}_{i})\cong\text{split}(\mathfrak{P})$ for each $1\le i\le \deg \mathfrak{P}$. The Galois group $\text{Gal}(\mathfrak{P}/K)$ acts faithfully and transitively on the prime factors $\mathfrak{Q}_{i}$.		
\end{remark}
	
\end{subsection}

%%%%%%%%%%%%%%%%%%%%%%%%%%%%%%%%%%%%%

\begin{subsection}{Primality and separability of the generic element}\label{subsection: primality and separability of the generic element}
	
	For any field extension $K/\FF_{\!q}$, define
	$$
	\begin{array}{rcl}
	R_{K}[\mathbf{A}]\ \Def\ K\otimes_{\FF_{\!q}}R[\mathbf{A}]
	\ \ \ \ \ \ \ \ \ \mbox{and}\ \ \ \ \ \ \ \ \ 
	R_{K}(\mathbf{A})\ \Def\ R\otimes_{\FF_{\!q}}K(\mathbf{A}).
	\end{array}
	$$
The canonical morphism $R[\mathbf{A}]\longrightarrow R_{K}[\mathbf{A}]$ lets us interpret both $f$ and $\mathcal{F}_{\!\mathbf{A}}$ as elements of $R_{K}[\mathbf{A}]$. By \S\ref{subsec: Div on Curves} and \cite[Corollaire 6.9.9]{EGAIII2}, we have
	$$
	H^{0}\big(C_{K},\mathscr{O}(E_{K})\big)
	\ \cong\ 
	K\!\otimes_{\FF_{\!q}}\!H^{0}\big(C,\mathscr{O}(E)\big),
	$$
and $\A^{\!m+1}_{K}=\sSpec{K[\mathbf{A}]}$ becomes a variety parameterizing elements in $I(f,E_{K})$.
	
\begin{lemma}\label{lemma: primality of (F_A)}
	\normalfont
	For any field extension $K/\FF_{\!q}$, the ideal $(\mathcal{F}_{\!\mathbf{A}})\subset R_{K}(\mathbf{A})$ generated by $\mathcal{F}_{\!\mathbf{A}}$ is prime in $R_{K}(\mathbf{A})$.
\end{lemma}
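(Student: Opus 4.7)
The plan is to reduce primality to a transcendence statement and to exploit the fact that $\mathcal{F}_{\!\mathbf{A}}$ is monic linear in $A_{0}$. First I would note that solving $\mathcal{F}_{\!\mathbf{A}}=0$ for $A_{0}$ yields an isomorphism $R_{K}[\mathbf{A}]/(\mathcal{F}_{\!\mathbf{A}})\cong R_{K}[A_{1},\dots,A_{m}]$, the right-hand side being a polynomial ring over the Dedekind domain $R_{K}$ and therefore automatically a domain. Under this identification, the structural $K[\mathbf{A}]$-algebra map becomes the $K$-algebra homomorphism
$$\iota\colon K[A_{0},A_{1},\dots,A_{m}]\ \longrightarrow\ R_{K}[A_{1},\dots,A_{m}], \qquad A_{0}\mapsto -f-\sum_{i=1}^{m}A_{i}f_{i},\quad A_{j}\mapsto A_{j}\ \ (j\ge 1).$$
Since $R_{K}(\mathbf{A})/(\mathcal{F}_{\!\mathbf{A}})$ is the localization of the domain $R_{K}[A_{1},\dots,A_{m}]$ at the image of $K[\mathbf{A}]\setminus\{0\}$ under $\iota$, primality of $(\mathcal{F}_{\!\mathbf{A}})$ in $R_{K}(\mathbf{A})$ is equivalent to injectivity of $\iota$.

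To establish injectivity of $\iota$, it suffices to show that the element $g\Def -f-\sum A_{i}f_{i}$ is transcendental over the subring $K[A_{1},\dots,A_{m}]\subset R_{K}[A_{1},\dots,A_{m}]$. The geometric irreducibility of $C$ over $\FF_{\!q}$ guarantees that $K(C_{K})$ is a regular extension of $K$ for every field extension $K/\FF_{\!q}$; consequently $K(A_{1},\dots,A_{m})$ is algebraically closed inside $\text{Frac}\bigl(R_{K}[A_{1},\dots,A_{m}]\bigr)=K(C_{K})(A_{1},\dots,A_{m})$. Thus if $g$ were algebraic over $K[A_{1},\dots,A_{m}]$, one would have $g\in K(A_{1},\dots,A_{m})$, and specializing at $A_{1}=\cdots=A_{m}=0$ would force $-f\in K$, contradicting the pole condition \eqref{eq: important inequality at E}, which forces $f$ to have at least one pole on $\text{supp}(E)$ and hence to be non-constant.

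The main obstacle is really the algebraic-closedness step, which is where geometric irreducibility of $C$ is used in an essential way; everything else is a direct manipulation made possible by the unit coefficient of $A_{0}$ in $\mathcal{F}_{\!\mathbf{A}}$. A pleasant feature of this plan is that it avoids any separate analysis of the zero scheme $V(\mathcal{F}_{\!\mathbf{A}})$ as a variety, and extends with no modification to arbitrary base field extensions $K/\FF_{\!q}$ because the regular-extension property passes through such base changes.
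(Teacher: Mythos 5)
Your argument is correct, and it is a genuinely different route from the paper's. The paper proves the lemma geometrically: it views $V(\mathcal{F}_{\!\mathbf{A}})$ inside $\A^{\!m+1}\times(C_{K}\backslash E_{K})$, fibers it over $C_{K}\backslash E_{K}$, and observes that on each fiber $\mathcal{F}_{\!\mathbf{A}}$ restricts to a nonzero \emph{linear} form in the $A_{i}$ (with unit coefficient on $A_{0}$), so no closed fiber can be a component and the generic fiber is irreducible; the pole condition \eqref{eq: important inequality at E} is used there to rule out $\mathcal{F}_{\!\mathbf{A}}$ being a unit. You instead eliminate $A_{0}$ outright, identifying $R_{K}[\mathbf{A}]/(\mathcal{F}_{\!\mathbf{A}})$ with the polynomial ring $R_{K}[A_{1},\dots,A_{m}]$ over the domain $R_{K}$ (a domain precisely because $C$ is geometrically integral), and correctly reduce primality after localizing at $K[\mathbf{A}]\setminus\{0\}$ to injectivity of $\iota$, i.e.\ to transcendence of $g=-f-\sum A_{i}f_{i}$ over $K(A_{1},\dots,A_{m})$; regularity of $K(C_{K})/K$ and its stability under the base change $K\subset K(A_{1},\dots,A_{m})$ then do exactly what you claim, and the pole condition enters only to rule out $f$ being constant. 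The two proofs consume the same two inputs (the unit coefficient of $A_{0}$ and geometric irreducibility of $C$), but yours is purely commutative-algebraic and yields the sharper structural fact that $R_{K}(\mathbf{A})/(\mathcal{F}_{\!\mathbf{A}})$ is a localization of $R_{K}[A_{1},\dots,A_{m}]$, whereas the paper's fiberwise analysis sets up the picture of $V(\mathcal{F}_{\!\mathbf{A}})$ that is reused later (e.g.\ in Remark \ref{remark: reduction to the original case}). The only step you elide is the identification $K(A_{1},\dots,A_{m})\cap R_{K}[A_{1},\dots,A_{m}]=K[A_{1},\dots,A_{m}]$ needed to justify the specialization $A_{1}=\cdots=A_{m}=0$ landing in $K$; this is standard (expand over a $K$-basis of $K(C_{K})$ containing $1$), but worth a sentence.
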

\begin{proof}
	Let $V(\mathcal{F}_{\!\mathbf{A}})$ be the variety cut out by $\mathcal{F}_{\!\mathbf{A}}$ in $\A^{\!m+1}\times\big(C_{K}\!\smallsetminus\! E_{K}\big)$ . The projections
	\begin{equation}\label{eq: morphism 1 for primality proof}
	\begin{aligned}
	\begin{xy}
	(0,0)*+{\A^{\!m+1}\!\times\!\big(C_{K}\!\smallsetminus\! E_{K}\big)}="1";
	(-30,0)*+{\A^{\!m+1}}="2";
	(30,0)*+{C_{K}\!\smallsetminus\! E_{K}}="3";
	{\ar_{\text{pr}_{1}\ \ \ \ \ \ \ \ \ } "1"; "2"};
	{\ar^{\ \ \ \ \ \ \ \ \ \text{pr}_{2}} "1"; "3"};
	\end{xy}
	\end{aligned}
	\end{equation}
restrict to morphisms
	$$
	\begin{xy}
	(0,0)*+{V(\mathcal{F}_{\!\mathbf{A}})}="1";
	(-30,0)*+{\A^{\!m+1}}="2";
	(30,0)*+{C_{K}\!\smallsetminus\! E_{K}.}="3";
	{\ar_{\text{pr}_{1}|_{V(\mathcal{F}_{\!\mathbf{A}})}} "1"; "2"};
	{\ar^{\text{pr}_{2}|_{V(\mathcal{F}_{\!\mathbf{A}})}} "1"; "3"};
	\end{xy}
	$$
Assume that $(\mathcal{F}_{\!\mathbf{A}})\subset R_{K}(\mathbf{A})$ is not prime. Then either the morphism $\text{pr}_{1}|_{V(\mathcal{F}_{\!\mathbf{A}})}$ has empty generic fiber, or else the subscheme $V(\mathcal{F}_{\!\mathbf{A}})\subset\A^{\!m+1}_{K}\!\times\!\big(C_{K}\!\smallsetminus\! E_{K}\big)$ has more than one irreducible component.
	
	Comparing the strict inequalities \eqref{eq: important inequality at E} with the inequality defining the inclusion
	$$
	A_{0}+\sum_{i=1}^{m} A_{i}f_{i}
	\ \ \in\ \ 
	H^{0}\big(C_{K(\mathbf{A})},\mathscr{O}(E_{K(\mathbf{A})})\big),
	$$
we see that $\nu_{\mathfrak{p}}(\mathcal{F}_{\!\mathbf{A}})< -m_{\mathfrak{p}}$ for all $\mathfrak{p}\in\text{supp}_{\ \!}E_{K(\mathbf{A})}$. Thus
	\begin{equation}\label{equation: important inequality for zeros of F_A}
	\text{div}(\mathcal{F}_{\!\mathbf{A}})_{+}\ne 0
	\ \ \ \ \ \ \mbox{and}\ \ \ \ \ \ 
	\text{supp}\big(\text{div}(\mathcal{F}_{\!\mathbf{A}})_{+}\big)\ \subset\ C_{K}\!\smallsetminus\! E_{K}.
	\end{equation}
In particular, $\mathcal{F}_{\!\mathbf{A}}$ is not a unit in $R_{K}(\mathbf{A})$, and $\text{pr}_{1}|_{V(\mathcal{F}_{\!\mathbf{A}})}$ does not have empty generic fiber.
	
	Because $V(\mathcal{F}_{\!\mathbf{A}})$ is pure of codimension-$1$ inside $\A^{\!m+1}\!\times \big(C_{K}\!\smallsetminus\! E_{K}\big)$, whereas $C_{K}\!\smallsetminus\! E_{K}$ is $1$-dimensional, an irreducible component of $V(\mathcal{F}_{\!\mathbf{A}})$ is either a whole fiber of the projection $\text{pr}_{2}$ in \eqref{eq: morphism 1 for primality proof} over a closed point of $C_{K}\!\smallsetminus\! E_{K}$, or else its generic point lies over the generic point of $C_{K}\!\smallsetminus\! E_{K}$. For any point $x\in C_{K}\!\smallsetminus\! E_{K}$, the function $\mathcal{F}_{\!\mathbf{A}}|_{x}=f(x)+A_{0}+\sum A_{i}f_{i}(x)\in\kappa(x)[\mathbf{A}]$ is linear in the variables $A_{i}$, and is nonzero since $A_{0}$ has coefficient $1$. For closed points $x\in C_{K}\!\smallsetminus\! E_{K}$, this shows that closed fibers of the morphism $\text{pr}_{2}$ in \eqref{eq: morphism 1 for primality proof} cannot be irreducible components of $V(\mathcal{F}_{\!\mathbf{A}})$. Over the generic point $\xi$ of $C_{K}\!\smallsetminus\! E_{K}$, linearity of the nonzero function $\mathcal{F}_{\!\mathbf{A}}|_{\xi}$ implies that the ideal $(\mathcal{F}_{\!\mathbf{A}}|_{\xi})\subset K(C_{K})[\mathbf{A}]$ is prime. Thus $V(\mathcal{F}_{\!\mathbf{A}})$ has a unique irreducible component.
\end{proof}

\begin{remark}\label{remark: point associated to F_A}
	Since $C$ is a curve, Lemma \ref{lemma: primality of (F_A)} implies that the subscheme $V(\mathcal{F}_{\!\mathbf{A}})_{K}\subset C_{K(\mathbf{A})}$ consists of a single closed point $\mathfrak{P}_{K}$. The residue field $\kappa(\mathfrak{P}_{K})=R_{K}(\mathbf{A})\big/(\mathcal{F}_{\!\mathbf{A}})$ is a finite extension of $K(\mathbf{A})$.
\end{remark}
	
\begin{lemma}\label{lemma: separability of (F_A)}
	\normalfont
	For any field extension $K/\FF_{\!q}$, the extension $\kappa(\mathfrak{P}_{K})\big/K(\mathbf{A})$ in Remark \ref{remark: point associated to F_A} is separable.
\end{lemma}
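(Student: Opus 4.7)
The strategy is to identify separability of $\kappa(\mathfrak{P}_{K})/K(\mathbf{A})$ with the nonvanishing of a single differential on $C_K$, and then to verify the latter using Riemann--Roch together with the very ampleness of $E$.

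First, since $\mathcal{F}_{\!\mathbf{A}}$ is affine linear in $A_{0}$, the assignment $A_{0}\mapsto h\Def-(f+\sum_{i=1}^{m}A_{i}f_{i})$ defines a ring isomorphism $R_{K}[\mathbf{A}]/(\mathcal{F}_{\!\mathbf{A}})\stackrel{\sim}{\to}R_{K}[\mathbf{A}']$ with $\mathbf{A}'\Def(A_{1},\ldots,A_{m})$. Localizing, and using Lemma \ref{lemma: primality of (F_A)} to see that the localization is a field, yields an identification $\kappa(\mathfrak{P}_{K})\cong K(C_{K})(\mathbf{A}')$ under which the inclusion $K(\mathbf{A})\hookrightarrow\kappa(\mathfrak{P}_{K})$ becomes the purely transcendental embedding $K(\mathbf{A}')(h)\hookrightarrow K(C_{K})(\mathbf{A}')$; here $h$ is transcendental over $K(\mathbf{A}')$ because it does not lie in $K(\mathbf{A}')$ (the $f_{i}$ are $K$-linearly independent and nonconstant in $K(C_{K})$), and $K(\mathbf{A}')$ is algebraically closed in $K(C_{K})(\mathbf{A}')$ by the geometric integrality of $C$. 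Consequently, $\kappa(\mathfrak{P}_{K})/K(\mathbf{A})$ is the function-field extension induced by the morphism $C_{K(\mathbf{A}')}\to\PP^{1}_{K(\mathbf{A}')}$ determined by $h$, and it is separable if and only if $dh\neq 0$ in the rank-one $K(C_{K})(\mathbf{A}')$-module $\Omega_{K(C_{K})(\mathbf{A}')/K(\mathbf{A}')}$.

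A direct computation gives $dh=-\bigl(df+\sum_{i=1}^{m}A_{i}\,df_{i}\bigr)$, with $df$ and the $df_{i}$ pulled back from $\Omega_{K(C_{K})/K}$. Because the $A_{i}$ are algebraically independent over $K(C_{K})$, this vanishes if and only if each of $df,df_{1},\ldots,df_{m}$ is zero in $\Omega_{K(C_{K})/K}$; equivalently, if and only if every element of the $K$-span of $\{f,1,f_{1},\ldots,f_{m}\}\supseteq H^{0}(C_{K},\mathscr{O}(E_{K}))$ lies in $\ker d$.

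The main obstacle is to rule this out. I would use that $\ker\bigl(d\colon K(C_{K})\to\Omega_{K(C_{K})/K}\bigr)=K\cdot K(C_{K})^{p}$, where $p=\mathrm{char}(\FF_{\!q})$ (a consequence of $[K(C_{K}):K\cdot K(C_{K})^{p}]=p$ combined with $d\not\equiv 0$), together with the fact that every element of $K\cdot K(C_{K})^{p}$ has pole order divisible by $p$ at each closed point of $C_{K}$. It therefore suffices to exhibit an element of $H^{0}(C_{K},\mathscr{O}(E_{K}))$ whose pole order at some $\mathfrak{p}\in\mathrm{supp}(E_{K})$ is coprime to $p$. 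Pick any $\mathfrak{p}\in\mathrm{supp}(E)$ and let $n\Def\nu_{\mathfrak{p}}(E)\geq 1$; very ampleness of $E$ gives $\deg E\geq 2g+1$, so Riemann--Roch forces $h^{0}(E)>h^{0}(E-\mathfrak{p})>h^{0}(E-2\mathfrak{p})$. This produces functions in $H^{0}(C_{K},\mathscr{O}(E_{K}))$ with pole at $\mathfrak{p}$ of exact orders $n$ and $n-1$, and at least one of $n,\,n-1$ is coprime to $p$. The corresponding $h_{0}$ lies outside $K\cdot K(C_{K})^{p}$ and hence has $dh_{0}\neq 0$; since $h_{0}\in H^{0}(C_{K},\mathscr{O}(E_{K}))$, this forces some $df_{i}\neq 0$, completing the proof.
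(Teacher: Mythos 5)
Your reduction of the lemma to the nonvanishing of $dh=-\bigl(df+\sum_{i=1}^{m}A_{i}\,df_{i}\bigr)$ in $\Omega_{K(C_{K})(\mathbf{A}')/K(\mathbf{A}')}$ is correct, and it is a clean function-field repackaging of what the paper does with an explicit Jacobian matrix. The problem is the last paragraph, where you try to produce some $f_{i}$ with $df_{i}\neq 0$: three of its steps fail as stated. First, very ampleness of $E$ does \emph{not} imply $\deg E\geq 2g+1$ (the implication goes the other way; a hyperplane section of a smooth plane quartic is very ample of degree $4<7$). Second, even granting $\deg E\geq 2g+1$, Riemann--Roch gives $h^{0}(E-\mathfrak{p})>h^{0}(E-2\mathfrak{p})$ only when $\deg(E-2\mathfrak{p})\geq 2g-1$, which can fail when the closed point $\mathfrak{p}\in\mathrm{supp}(E)$ has large degree over $\FF_{\!q}$; very ampleness alone does not guarantee a section with pole of exact order $n-1$ at a non-rational point. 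Third, and most seriously, the claim that every element of $K\cdot K(C_{K})^{p}$ has pole order divisible by $p$ at each closed point is false when $K$ is imperfect, which the lemma allows: for $K=\FF_{p}(s)$ and $C=\PP^{1}$, the function $t^{p}+s$ lies in $K\cdot K(C_{K})^{p}$ but has a simple zero at the degree-$p$ closed point it defines (whose residue field $\FF_{p}(s^{1/p})$ is purely inseparable over $K$). The claim does hold at points whose residue field is separable over $K$ --- which covers $\mathrm{supp}(E_{K})$, since $E_{K}$ is pulled back from $C$ --- but you neither state nor verify this restriction.

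All of this can be bypassed: very ampleness means $[1:f_{1}:\cdots:f_{m}]$ restricts to a closed immersion of $C\backslash E$ into affine space, so it is unramified and the differentials $df_{1},\dots,df_{m}$ generate $\Omega^{1}_{C_{K}\backslash E_{K}}$ at every point; in particular they cannot all vanish in $\Omega_{K(C_{K})/K}$, which is exactly what your criterion requires. (This is the content of the invertible minor $M_{mm}$ in the paper's proof.) With that substitution your argument closes; as written, the final step has genuine gaps.
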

\begin{proof}
	For each homogenous function $h$ on $\PP^{m}$, let $D(h)$ denote the distinguished open subscheme of $\PP^{m}$ where $h$ is nonzero. The fact that $E$ is very ample allows us to choose a polynomial $f(\mathbf{x})\in \FF_{\!q}[x_{1},\dots,x_{m}]$ such that the regular function $f\in R$ is the restriction of $f(\mathbf{x})$ to $C\!\smallsetminus\! E=C\cap D(x_{0})$. The function $\mathcal{F}_{\!\mathbf{A}}$ on $C_{K(\mathbf{A})}$ is then the restriction of the function
	$$
	\mathcal{F}_{\!\mathbf{A}}(\mathbf{x})
	\ \ \Def\ \ 
	f(\mathbf{x})+A_{0}+\sum_{i=1}^{m}A_{i}\ x_{i}
	\ \ \ \ \mbox{defined on}\ \ \ \ 
	D(x_{0}).
	$$
The curve $C$ is smooth, therefore there exist functions $y\in \FF_{\!q}[x_{1},\dots,x]$ and $r_{1},\dots,r_{m-1}\in \FF_{\!q}[x_{1},\dots,x_{m},\frac{1}{y}]$ for which the point $\mathfrak{P}_{K}$ in Remark \ref{remark: point associated to F_A} lies in the affine open neighborhood
	$$
	C_{K(\mathbf{A})}\cap D(x_{0}y)_{K(\mathbf{A})}
	\ \cong\ 
	\sSpec{K(\mathbf{A})\big[x_{1},\dots,x_{m},\tfrac{1}{y}\big]}\big/(r_{1},\dots,r_{m-1}),
	$$
and such that the determinant of the $(m-1)\times(m-1)$-minor $M_{mm}$ in the matrix
	\begin{equation}\label{equation: key matrix 1}
	\!\!\!\!\!
	M
	\ \ \Def\ 
	\begin{xy}
	(0,0)*+{
	\left(\!\!
	\begin{array}{ccccc}
	\frac{\partial r_{1}}{\partial x_{1}}
	&
	\frac{\partial r_{1}}{\partial x_{2}}
	&
	\cdots
	&
	\frac{\partial r_{1}}{\partial x_{m-1}}
	&
	\frac{\partial r_{1}}{\partial x_{m}}
	\\[6pt]
	\frac{\partial r_{2}}{\partial x_{1}}
	&
	\frac{\partial r_{2}}{\partial x_{2}}
	&
	\cdots
	&
	\frac{\partial r_{2}}{\partial x_{m-1}}
	&
	\frac{\partial r_{2}}{\partial x_{m}}
	\\[6pt]
	\vdots
	&
	\vdots
	&
	\ddots
	&
	\vdots
	&
	\vdots
	\\[6pt]
	\frac{\partial r_{m-1}}{\partial x_{1}}
	&
	\frac{\partial r_{m-1}}{\partial x_{2}}
	&
	\cdots
	&
	\frac{\partial r_{m-1}}{\partial x_{m-1}}
	&
	\frac{\partial r_{m-1}}{\partial x_{m}}
	\\[6pt]
	& & & &\ 
	\\[-8pt]
	\!\!\frac{\partial \mathcal{F}_{\!\mathbf{A}}(\mathbf{x})}{\partial x_{1}}\!\!\!
	&
	\!\!\frac{\partial \mathcal{F}_{\!\mathbf{A}}(\mathbf{x})}{\partial x_{2}}\!\!\!
	&
	\cdots
	&
	\!\!\frac{\partial \mathcal{F}_{\!\mathbf{A}}(\mathbf{x})}{\partial x_{m-1}}\!\!\!
	&
	\!\!\frac{\partial \mathcal{F}_{\!\mathbf{A}}(\mathbf{x})}{\partial x_{m}}\!\!\!
	\end{array}
	\!\!\right)
	\ \ \ \ \ \ \ \ \ \!};
	(-10.5,4.7)*+{
	\begin{tikzpicture}
	\draw[gray, very thick] (0,0) -- (4.6,0) -- (4.6,3) -- (0,3) -- cycle;
	\end{tikzpicture}
	};
	(-10,22)*+{\mbox{{\color{gray} minor $M_{mm}$}}};
	\end{xy}
	\end{equation}
is invertible on $C_{K(\mathbf{A})}\cap D(x_{0}y)_{K(\mathbf{A})}$. The entries in the last row of $M$ all have the explicit form
	$$
	\frac{\partial\mathcal{F}_{\!\mathbf{A}}(\mathbf{x})}{\partial x_{i}}
	\ =\ 
	\frac{\partial f(\mathbf{x})}{\partial x_{i}}+A_{i}.
	$$
Hence, the term associated to $M_{mm}$ in the cofactor expansion of $\text{det}(M)$ is the only cofactor term in which $A_{m}$ appears. The coefficient of $A_{m}$ in this term is nonzero at $\mathfrak{P}_{K}$, therefore $\text{det}(M)$ is nonzero at $\mathfrak{P}_{K}$. The $K(\mathbf{A})$-scheme $\text{Spec}_{\ \!}\kappa(\mathfrak{P}_{K})$ is then smooth of relative dimension $0$ over $K(\mathbf{A})$, or equivalently, $\text{Spec}_{\ \!}\kappa(\mathfrak{P}_{K})$ is \'etale over $\text{Spec}_{\ \!}K(\mathbf{A})$ \cite[\S I.3, Corollary 3.16]{Milne:80}, and the field extension $\kappa(\mathfrak{P}_{K})/K(\mathbf{A})$ is separable \cite[\S I.3, Proposition 3.2.(a) \& (e)]{Milne:80}.
\end{proof}

	From Lemmas \ref{lemma: primality of (F_A)} and \ref{lemma: separability of (F_A)}, we immediately have the following:

\begin{corollary}
\normalfont
	For each algebraic extension $K/\FF_{\!q}$, the prime ideal $\mathfrak{P}_{K}=(\mathcal{F}_{\!\mathbf{A}})\subset R_{K}(\mathbf{A})$ has an associated Galois group, which we henceforth denote
	\begin{center}
	\hfill
	$
	\text{Gal}\big(\mathcal{F}_{\!\mathbf{A}},K(\mathbf{A})\big)
	\ \ \Def\ \ 
	\text{Gal}\big(\mathfrak{P}\big/K(\mathbf{A})\big).
	$
	\hfill$\square$
	\end{center}
\end{corollary}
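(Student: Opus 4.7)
The corollary is essentially a packaging statement, obtained by invoking Definition \ref{definition: splitting field of a point} and checking that its hypotheses are met by $\mathfrak{P}_K$. So my plan is simply to combine the preceding two lemmas, and no genuine obstacle is expected.

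First, I would invoke Lemma \ref{lemma: primality of (F_A)} with the chosen algebraic extension $K/\FF_{\!q}$. This gives that the ideal $(\mathcal{F}_{\!\mathbf{A}})\subset R_K(\mathbf{A})$ is prime, so the notation $\mathfrak{P}_{K}=(\mathcal{F}_{\!\mathbf{A}})$ is unambiguous and refers to a genuine prime ideal. Equivalently, as explained in Remark \ref{remark: point associated to F_A}, the closed subscheme $V(\mathcal{F}_{\!\mathbf{A}})_{K}\subset C_{K(\mathbf{A})}$ consists of a single closed point with residue field $\kappa(\mathfrak{P}_K)=R_K(\mathbf{A})/(\mathcal{F}_{\!\mathbf{A}})$, a finite extension of $K(\mathbf{A})$.

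Next, I would apply Lemma \ref{lemma: separability of (F_A)} to the same $K$, which says precisely that the residue field extension $\kappa(\mathfrak{P}_{K})/K(\mathbf{A})$ is separable. This is the exact hypothesis required by the second half of Definition \ref{definition: splitting field of a point}.

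Finally, having verified both that $\mathfrak{P}_K$ is a prime in $K(\mathbf{A})\otimes_{\FF_{\!q}(\mathbf{A})}R(\mathbf{A})=R_K(\mathbf{A})$ and that $\kappa(\mathfrak{P}_K)/K(\mathbf{A})$ is separable, the splitting field $\text{split}(\mathfrak{P}_K/K(\mathbf{A}))$ — the normal closure of $\kappa(\mathfrak{P}_K)$ inside the fixed algebraic closure $\overline{\FF_{\!q}(\mathbf{A})}$ — and its Galois group $\text{Gal}(\mathfrak{P}_K/K(\mathbf{A}))$ are defined by Definition \ref{definition: splitting field of a point}. Adopting the shorthand $\text{Gal}(\mathcal{F}_{\!\mathbf{A}},K(\mathbf{A}))\Def\text{Gal}(\mathfrak{P}_K/K(\mathbf{A}))$ then yields the statement. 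No calculation beyond quoting the two lemmas is required, so this corollary is a one-line consequence — the real content lies upstream in the proofs of Lemmas \ref{lemma: primality of (F_A)} and \ref{lemma: separability of (F_A)}.
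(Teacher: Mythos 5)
Your proposal is correct and matches the paper exactly: the paper states this corollary as an immediate consequence of Lemmas \ref{lemma: primality of (F_A)} and \ref{lemma: separability of (F_A)}, which together verify the hypotheses of Definition \ref{definition: splitting field of a point}, just as you describe.
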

	
\begin{lemma}\label{lemma: inclusion of Galois groups under base change}
\normalfont
	For each algebraic field extension $K/\FF_{\!q}$, there is an inclusion of Galois groups
	\begin{equation}\label{lemma: inclusion for comparing Galois groups}
	\text{Gal}\big(\mathcal{F}_{\!\mathbf{A}},K(\mathbf{A})\big)
	\mono
	\text{Gal}\big(\mathcal{F}_{\!\mathbf{A}},\FF_{\!q}(\mathbf{A})\big).
	\end{equation}
\end{lemma}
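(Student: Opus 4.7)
The plan is to identify the two splitting fields: setting $L\Def\text{split}(\mathfrak{P}/\FF_{\!q}(\mathbf{A}))$ and $L_K\Def\text{split}(\mathfrak{P}_K/K(\mathbf{A}))$ as subfields of $\overline{\FF_{\!q}(\mathbf{A})}$, I aim to show the equality of fields $L_K\,=\,L\cdot K(\mathbf{A})$. Once this is in hand, the standard Galois-theoretic fact that restriction gives an injection
$$\text{Gal}\big(L\cdot K(\mathbf{A})/K(\mathbf{A})\big)\,\lra\,\text{Gal}\big(L/\FF_{\!q}(\mathbf{A})\big)$$
(with image $\text{Gal}(L/L\cap K(\mathbf{A}))$) yields precisely \eqref{lemma: inclusion for comparing Galois groups}.

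First I would compute $\kappa(\mathfrak{P}_K)$. Since $R_K(\mathbf{A})=R(\mathbf{A})\otimes_{\FF_{\!q}(\mathbf{A})}K(\mathbf{A})$, we have
$$\kappa(\mathfrak{P}_K)\,=\,R_K(\mathbf{A})\big/(\mathcal{F}_{\!\mathbf{A}})\,=\,\kappa(\mathfrak{P})\otimes_{\FF_{\!q}(\mathbf{A})}K(\mathbf{A}),$$
and Lemma \ref{lemma: primality of (F_A)} guarantees this tensor product is a field. Fixing the inclusion $K(\mathbf{A})\subset\overline{\FF_{\!q}(\mathbf{A})}$ and any embedding $\kappa(\mathfrak{P})\hookrightarrow\overline{\FF_{\!q}(\mathbf{A})}$, I can therefore identify $\kappa(\mathfrak{P}_K)$ with the compositum $\kappa(\mathfrak{P})\cdot K(\mathbf{A})$ inside $\overline{\FF_{\!q}(\mathbf{A})}$. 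Separability of both extensions, needed for the Galois groups to be defined on both sides of \eqref{lemma: inclusion for comparing Galois groups}, follows from Lemma \ref{lemma: separability of (F_A)} together with the fact that $K(\mathbf{A})/\FF_{\!q}(\mathbf{A})$ is separable (as $\FF_{\!q}$ is perfect).

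Next I would establish $L_K=L\cdot K(\mathbf{A})$ by two inclusions. The containment $L_K\subseteq L\cdot K(\mathbf{A})$ is immediate: the field $L\cdot K(\mathbf{A})$ is normal over $K(\mathbf{A})$ (compositum of a normal extension with any extension) and contains $\kappa(\mathfrak{P}_K)=\kappa(\mathfrak{P})\cdot K(\mathbf{A})$, hence it contains the normal closure $L_K$. For the reverse containment it suffices to show $L\subseteq L_K$. The universal property of tensor product supplies a bijection
$$\text{Hom}_{K(\mathbf{A})}\big(\kappa(\mathfrak{P}_K),\overline{\FF_{\!q}(\mathbf{A})}\big)\,\xrightarrow{\,\sim\,}\,\text{Hom}_{\FF_{\!q}(\mathbf{A})}\big(\kappa(\mathfrak{P}),\overline{\FF_{\!q}(\mathbf{A})}\big)$$
given by restriction, whose inverse sends $\psi$ to $\psi\otimes\text{id}_{K(\mathbf{A})}$ (which is an embedding of fields precisely because $\kappa(\mathfrak{P}_K)$ is a field). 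Since $L$ is generated over $\FF_{\!q}(\mathbf{A})$ by the images of the embeddings on the right, and each such image is contained in the image of its corresponding $K(\mathbf{A})$-embedding on the left, which in turn lies in $L_K$, we conclude $L\subseteq L_K$.

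The hard part is this reverse containment: it depends critically on the primality of $(\mathcal{F}_{\!\mathbf{A}})$ in $R_K(\mathbf{A})$ from Lemma \ref{lemma: primality of (F_A)}, which is what makes $\kappa(\mathfrak{P})\otimes_{\FF_{\!q}(\mathbf{A})}K(\mathbf{A})$ a single field rather than a nontrivial product, and thereby ensures that every $\FF_{\!q}(\mathbf{A})$-conjugate of $\kappa(\mathfrak{P})$ inside $\overline{\FF_{\!q}(\mathbf{A})}$ is the restriction of a $K(\mathbf{A})$-conjugate of $\kappa(\mathfrak{P}_K)$.
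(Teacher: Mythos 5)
Your proof is correct and follows essentially the same route as the paper's: both rest on the identification $\kappa(\mathfrak{P}_K)\cong K\cdot\kappa(\mathfrak{P})$ inside $\overline{\FF_{\!q}(\mathbf{A})}$ followed by standard Galois theory of composita. If anything, your argument is the more complete of the two --- the paper passes directly from this identification to the inclusion of Galois groups, whereas you first verify the equality of splitting fields $L_K=L\cdot K(\mathbf{A})$ (in particular the nontrivial containment $L\subseteq L_K$, which uses the primality of $(\mathcal{F}_{\!\mathbf{A}})$ in $R_K(\mathbf{A})$) before invoking the standard restriction injection.
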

\begin{proof}
	Since $\kappa(\mathfrak{P}_{K})$ is isomorphic to the compositum $K\!\cdot\kappa(\mathfrak{P})\subset\overline{\FF_{\!q}(\mathbf{A})}$, we have isomorphisms
	\begin{equation}\label{equation: string of isomorphisms}
	\text{Gal}\big(\mathcal{F}_{\!\mathbf{A}},K(\mathbf{A})\big)
	\ \xrightarrow{\ \sim\ }\ 
	\text{Gal}\big(K\!\cdot\!\kappa(\mathfrak{P})\big/K(\mathbf{A})\big)
	\ \xrightarrow{\ \sim\ }\ 
	\text{Gal}\big(\kappa(\mathfrak{P}_{K})\big/K(\mathbf{A})\big).
	\end{equation}
Post-composing \eqref{equation: string of isomorphisms} with the inclusion $\text{Gal}\big(\kappa(\mathfrak{P}_{K})\big/K(\mathbf{A})\big)\hookrightarrow\text{Gal}\big(\kappa(\mathfrak{P}_{K})\big/\FF_{\!q}(\mathbf{A})\big)$, we obtain the embedding \eqref{lemma: inclusion for comparing Galois groups}.
\end{proof}

\begin{proposition}\label{proposition: etale!}
\normalfont
	The branch locus $Z\subset\A^{\!m+1}_{\FF_{\!q}}$ of the morphism $\text{pr}_{1}:V(\mathcal{F}_{\!\bold{A}})\lra\A^{\!m+1}$ has codimension $\ge 1$ in $\A^{\!m+1}_{\FF_{\!q}}$, and its compliment $\A^{\!m+1}\smallsetminus Z$ is the maximal open subset of $\A^{\!m+1}_{\FF_{\!q}}$ over which $V(\mathcal{F}_{\!\bold{A}})$ is finite \'etale. 
\end{proposition}
\begin{proof}
Lemma~\ref{lemma: separability of (F_A)} implies that $V(\mathcal{F}_{\!\bold{A}})$ is generically unramified over $\A^{m+1}_{\FF_q}$, and thus that $Z$ has codimension $\ge 1$ in $\A^{\!m+1}_{\FF_{\!q}}$. Define
		$$
		X
		\ \overset{{}_{\text{def}}}{=}\ 
		\A^{\!m+1}_{\FF_{\!q}}\smallsetminus Z
		\ \ \ \ \ \ \ \ \ \text{and}\ \ \ \ \ \ \ \ \ 
		Y
		\ \overset{{}_{\text{def}}}{=}\ 
		V(\mathcal{F}_{\!A})_{\A^{\!m+1}_{\FF_{\!q}}\smallsetminus Z}
		\ =\ 
		V(\mathcal{F}_{\!A})_{X}.
		$$
Then the resulting morphism $\text{pr}_{1}|_{Y}:Y\lra X$ is finite, surjective, and unramified of degree $k$. The variety $X$ is a normal, and surjectivity of $\text{pr}_{1}|_{Y}$ implies that for each $y\in Y$, the morphism of stalks $\mathscr{O}_{\!X,\text{pr}_{1\!}(y)}\lra\mathscr{O}_{Y,y}$ is injective \cite[\href{http://stacks.math.columbia.edu/tag/0CC1}{Tag 0CC1}, (1) \& (6)]{STACKS}. Thus by \cite[\S1, Lemma 1.5]{FK}, the morphism $\text{pr}_{1}|_{Y}$ is \'etale. Because $\A^{\!m+1}_{\FF_{\!q}}\!\smallsetminus\!X$ is the branch locus $Z$, this implies that $X$ is the maximal open subset of $\A^{\!m+1}_{\FF_{\!q}}$ over which $V(\mathcal{F}_{\!\bold{A}})$ is finite \'etale.
\end{proof}

\end{subsection}

\end{section}

\vskip .5cm

%%%%%%%%%%%%%%%%%%%%%%%%%%%%%%%%%%%%%%
%%%%%%%%%%%%%%%%%%%%%%%%%%%%%%%%%%%%%%

\begin{section}{Calculation of the Galois group}\label{section: calculation of the Galois group}

\begin{subsection}{A characterization of the symmetric group}
	Recall from \S\ref{subsection: setting} that we fix an effective very ample divisor $E$ on $C$ and a function $f$ regular on $C\!\smallsetminus\! E$ with poles satisfying the inequalities \eqref{eq: important inequality at E}, and that $k\overset{{}_{\text{def}}}{=}\text{deg}(\text{div}(f)_{-})$. Let $S_{k}$ denote the symmetric group on $k$ letters. Our goal in the present section is to prove the following:
	
\begin{theorem}\label{theorem: Galois group is S_k}
\normalfont
	Assume that $E$ satisfies one of the following two conditions
	\begin{itemize}
	\item[{\bf (a)}]
	There exists a very ample effective divisor $E_{0}$ on $C$ such that $E\ge 3E_{0}$;
	\item[{\bf (b)}]\vskip .1cm
	There exists a very ample effective divisor $E_{0}$ on $C$ such that $E\ge 2E_{0}$, $\text{char}_{\ \!}\FF_{\!q}=2$, and $df|_{C\!\smallsetminus\! E}$ vanishes on a finite nonempty set.\vskip .1cm
	\end{itemize}
Then the Galois group $\text{Gal}\big(\mathcal{F}_{\!\mathbf{A}},\FF_{\!q}(\mathbf{A})\big)$ is isomorphic to $S_{k}$.
\end{theorem}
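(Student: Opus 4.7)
The plan is to prove $G := \text{Gal}\big(\mathcal{F}_{\!\mathbf{A}},\FF_{\!q}(\mathbf{A})\big) = S_k$ by verifying Jordan's classical characterization of the symmetric group: a transitive subgroup of $S_k$ that is primitive and contains a transposition must equal $S_k$. I would expect Section \ref{section: calculation of the Galois group} to open with a statement of this (or an equivalent) characterization, and then devote separate subsections to checking each hypothesis, with the positivity assumptions on $E$ entering at precisely the places where extra geometric room is needed.

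The first step is to make the $S_k$-action explicit and verify transitivity. Since each auxiliary function $A_0 + \sum A_i f_i$ has pole divisor strictly smaller than $\text{div}(f)_-$ at every point of $\text{supp}(E)$ (by \eqref{eq: important inequality at E}), the pole divisor of $\mathcal{F}_{\!\mathbf{A}}$ on $C_{\FF_{\!q}(\mathbf{A})}$ equals $\text{div}(f)_-$, so the zero divisor of $\mathcal{F}_{\!\mathbf{A}}$ on $C_{\FF_{\!q}(\mathbf{A})}\backslash E_{\FF_{\!q}(\mathbf{A})}$ has total degree $k$. Combining Lemma \ref{lemma: primality of (F_A)} (single closed point) with Lemma \ref{lemma: separability of (F_A)} (separable residue field) gives $[\kappa(\mathfrak{P}):\FF_{\!q}(\mathbf{A})]=k$, so $G$ embeds into $S_k$ and acts transitively on the $k$ geometric zeros of $\mathcal{F}_{\!\mathbf{A}}$.

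The second step is to produce a transposition in $G$. The natural approach is to choose an $\overline{\FF_{\!q}}$-point $\mathbf{a}_{0}\in\mathbb{A}^{m+1}$ such that $\mathcal{F}_{\!\mathbf{a}_{0}}$ has $k-2$ simple zeros and one double zero of "nodal" type in the total family $V(\mathcal{F}_{\!\mathbf{A}})\to\mathbb{A}^{m+1}$; the inertia group at $\mathbf{a}_0$ is then generated by a transposition of the two colliding roots, and this transposition lifts to $G$ (invoking, if need be, Bary-Soroker's Chebotarev-type result \cite[Prop.~2.2]{BarySoroker2012} to guarantee that the specialization stays inside the generic Galois group). Under hypothesis (a), the bound $E\ge 3E_{0}$ with $E_{0}$ very ample guarantees that $|E|$ separates $2$-jets at every geometric point, which is exactly what is needed to engineer such a one-parameter collision of simple zeros at an arbitrarily chosen location. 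Under hypothesis (b), the weaker bound $E\ge 2E_{0}$ combined with the zero locus of $df$ plays the substitute role: pinning the double zero to a zero of $df$ replaces the tangent-separation step that is unavailable when $\text{char}_{\ \!}\FF_{\!q}=2$.

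The third and hardest step is primitivity. A nontrivial block system on the $k$ geometric zeros of $\mathcal{F}_{\!\mathbf{A}}$ corresponds to an intermediate field $\FF_{\!q}(\mathbf{A})\subsetneq L\subsetneq\kappa(\mathfrak{P})$, i.e., to a nontrivial factorization of the degree-$k$ cover $\text{Spec}_{\ \!}\kappa(\mathfrak{P})\to\text{Spec}_{\ \!}\FF_{\!q}(\mathbf{A})$ through an intermediate scheme. The strategy to rule this out is again to exhibit a specialization whose ramification behavior is incompatible with any block decomposition --- for instance a specialization whose cycle type is a single $(k-1)$-cycle together with a fixed point, or more generally one whose cycle lengths have gcd $1$ and include a cycle of length not dividing $k/2$. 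The positivity assumption on $E$ is what supplies the linear-system room to realize such cycle types. I expect this step to be the technical heart of the section: one must simultaneously control the ramification of the deformation at the specialization point and verify that the specialized element genuinely lies in the short interval under consideration, and in case (b) one must again reroute the deformation through the zero locus of $df$. Once primitivity is established, Jordan's theorem closes the argument.
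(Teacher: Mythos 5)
Your skeleton matches the paper's at the level of strategy: both reduce to a group-theoretic characterization of $S_{k}$ (you invoke Jordan's primitive-plus-transposition criterion; the paper uses the transitive, doubly transitive, plus-a-transposition criterion of Lemma \ref{lemma: serre S_k}), both obtain transitivity from Lemmas \ref{lemma: primality of (F_A)} and \ref{lemma: separability of (F_A)} exactly as you describe, and both extract a transposition from inertia at a branch point. But the two substantive steps of your plan are asserted rather than proved, and each hides a real difficulty. For the transposition, you posit a specialization with ``$k-2$ simple zeros and one double zero of nodal type,'' but the existence of such a specialization is precisely what must be established, and it is where hypotheses (a)/(b) do their work. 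The paper writes $\mathcal{F}_{\!\mathbf{A}}=\Psi(\mathbf{x})+A_{0}$ and studies the covering $\Psi$ over $L=\overline{\FF_{\!q}(\mathbf{A})}$, specializing only $A_{0}$; it must then show that $\Psi$ is ramified somewhere on $C_{L}\backslash E_{L}$ (Proposition \ref{proposition: existence of a ramified point}, via Riemann--Hurwitz and a bound on ramification along $E$), that every ramification order is at most $1$ (Proposition \ref{proposition: at most double roots}), and that distinct critical points have distinct critical values (Proposition \ref{proposition: system of equations has no solution}). Your proposal never addresses the last point: without it, two pairs of roots could collide over the same critical value and the inertia element would be a product of two transpositions, not a transposition. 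That separation of critical values is exactly where the sections $1,t,t^{2},t^{3}$ supplied by $E\ge 3E_{0}$ (or $1,t,t^{2}$ together with nonconstant $df$ when $\mathrm{char}\,\FF_{\!q}=2$) are consumed, via the non-vanishing of an explicit determinant; ``separating $2$-jets'' is not the mechanism.

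The larger gap is the third step. You propose to establish primitivity by realizing a cycle type such as $(k-1,1)$ through specialization, but you give no construction, and producing a specialization with a prescribed long cycle is not easier than the theorem itself; nothing in the positivity hypotheses obviously delivers it. The paper instead proves double transitivity directly (Proposition \ref{proposition: doubly transitive on points}): it restricts $\mathbf{A}$ to the generic point $\mathfrak{h}$ of a well-chosen hyperplane in $\A^{\!m+1}$ so that $\mathcal{F}_{\!\mathfrak{h}}$ factors as a linear form $\ell$ times the generic element $\mathcal{F}_{\!\mathbf{A}'}$ of a new short interval $I(f',E')$ with $E'-\mathfrak{q}$ still effective and very ample (this is the only place the weaker consequence of (a)/(b) recorded in Remark \ref{remark: regarding the star condition} is needed); reapplying Lemmas \ref{lemma: primality of (F_A)} and \ref{lemma: separability of (F_A)} to $\mathcal{F}_{\!\mathbf{A}'}$ then shows the stabilizer of one root acts transitively on the remaining $k-1$. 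This decomposition-group argument is the technical heart of the section and is absent from your plan, so as written the proposal does not yet constitute a proof.
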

	
\begin{remark}
	To prove Theorem \ref{theorem: Galois group is S_k}, we use the following characterization of $S_{k}$:
\end{remark}

\begin{lemma}\label{lemma: serre S_k}\normalfont
{\bf \cite[Lemma~4.4.3]{Serre2007Topics}.}
	A subgroup $G\subset S_k$ is equal to $S_{k}$ if and only if $G$ satisfies the following three conditions:
	\begin{itemize}
	\item[{\bf (i)}] $G$ is transitive;
	\item[{\bf (ii)}]\vskip .1cm $G$ is doubly transitive;
	\item[{\bf (iii)}]\vskip .1cm $G$ contains a transposition.\hfill$\square$
	\end{itemize}\vskip .2cm
\end{lemma}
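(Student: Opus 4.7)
The plan is to verify both implications of the characterization. The forward direction, that $G = S_k$ implies (i), (ii), and (iii), is immediate: $S_k$ acts $k$-transitively on $\{1,\dots,k\}$ and of course contains all transpositions.

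For the reverse direction, the strategy is to use the double transitivity hypothesis to show that the $G$-conjugacy class of the transposition supplied by (iii) exhausts every transposition in $S_k$. Concretely, I would begin with a transposition $\tau = (i_0, j_0) \in G$ supplied by (iii). Given any two distinct indices $i, j \in \{1, \dots, k\}$, condition (ii) produces some $g \in G$ with $g(i_0) = i$ and $g(j_0) = j$. A direct computation using the standard identity for conjugation of cycles then yields $g \tau g^{-1} = (g(i_0), g(j_0)) = (i, j)$, which therefore lies in $G$. Since $i$ and $j$ were arbitrary, $G$ contains every transposition in $S_k$.

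The proof concludes by invoking the classical fact that the transpositions generate $S_k$, whence $G = S_k$. I do not anticipate a genuine obstacle: condition (i) is logically subsumed by (ii) and is listed only for emphasis alongside the stronger hypothesis, and the sole nontrivial input beyond the stated hypotheses is the conjugation identity $g \cdot (a\,b) \cdot g^{-1} = (g(a),\, g(b))$ together with the fact that transpositions are a generating set of $S_k$.
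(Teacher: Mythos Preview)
Your argument is correct: conjugating the given transposition by elements supplied by double transitivity produces every transposition, and transpositions generate $S_k$. Note that the paper does not itself prove this lemma; it merely cites Serre \cite[Lemma~4.4.3]{Serre2007Topics} and marks it with a $\square$, so there is no ``paper's own proof'' to compare against---your proof is a standard one and would serve perfectly well.
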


\noindent
{\em Beginning of the proof of Theorem \ref{theorem: Galois group is S_k}.}	
	Observe that for any algebraic extension $K/\FF_{\!q}$ the condition \eqref{eq: important inequality at E} and its consequence \eqref{equation: important inequality for zeros of F_A}, combined with the fact that the total degree of any principal divisor is $0$, imply that $\text{deg}_{\ \!}\mathfrak{P}_{K}=k$. Thus by Remark \ref{remark: Galois action on split prime}, the Galois group $\text{Gal}\big(\mathcal{F}_{\!\mathbf{A}},K(\mathbf{A})\big)$ comes with a natural faithful action on a set of $k$ elements, namely the prime factors in \eqref{eq: prime splitting as factorization}. In this way, we obtain an embedding
	\begin{equation}\label{equation: inclusion from action on factors}
	\text{Gal}\big(\mathcal{F}_{\!\mathbf{A}},K(\mathbf{A})\big)\mono S_{k}
	\end{equation}
for each algebraic extension $K/\FF_{\!q}$. For the special case $K=\overline{\FF_{\!q}}$, Lemma \ref{lemma: inclusion of Galois groups under base change} tells us that the inclusion \eqref{equation: inclusion from action on factors} factors as
	$$
	\begin{xy}
	(0,0)*+{
	\ \Gal(\mathcal{F}_{\!\mathbf{A}}, \overline{\FF_q}(\mathbf{A}))\ 
	}="1";
	(30,10)*+{
	\Gal(\mathcal{F}_{\!\mathbf{A}}, \FF_q(\mathbf{A}))
	}="2";
	(50,0)*+{
	S_k.
	}="3";
	{\ar@{^{(}->} "1"; "2"};
	{\ar@{_{(}->} "2"; "3"};
	{\ar@{_{(}->} "1"; "3"};
	\end{xy}
	$$
It therefore suffices to check that the Galois group $\text{Gal}\big(\mathcal{F}_{\!\mathbf{A}},\overline{\FF_{\!q}}(\mathbf{A})\big)$ satisfies the three conditions in Lemma \ref{lemma: serre S_k}. We verify these conditions in \S\ref{subsection: transitivity and double transitivity} and \S\ref{subsection: presence of a transposition} below.

\end{subsection}

%%%%%%%%%%%%%%%%%%%%%%%%%%%%%%%%%%%%%%

\begin{subsection}{Transitivity and double transitivity}\label{subsection: transitivity and double transitivity}
	By Remark \ref{remark: Galois action on split prime}, the embedding \eqref{equation: inclusion from action on factors} realizes the group $\text{Gal}\big(\mathcal{F}_{\!\mathbf{A}}\big/\overline{\FF_{\!q}}(\mathbf{A})\big)$ as a transitive subgroup of $S_{k}$. Varifying condition (ii) of Lemma \ref{lemma: serre S_k} in the setting of Theorem \ref{theorem: Galois group is S_k} amounts to proving the following:
	
\begin{proposition}\label{proposition: doubly transitive on points}
	\normalfont
	If $E$ satisfies either of the conditions (a) or (b) in Theorem \ref{theorem: Galois group is S_k}, then the subgroup $\text{Gal}\big(\mathcal{F}_{\!\mathbf{A}}\big/\overline{\FF_{\!q}}(\mathbf{A})\big)\subset S_{k}$ is doubly transitive.
\end{proposition}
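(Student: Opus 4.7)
The plan is to translate double transitivity of the Galois action into a geometric irreducibility statement, and then verify that statement by fibering over pairs of distinct points on the curve. Set $K=\overline{\FF_{\!q}}$ and let $X=V(\mathcal{F}_{\!\mathbf{A}})_{K}\subset\A^{\!m+1}_{K}\times(C\backslash E)_{K}$ denote the incidence variety from \S\ref{subsection: generic element in a short interval}. By Lemmas \ref{lemma: primality of (F_A)} and \ref{lemma: separability of (F_A)}, the fiber $X_{\eta}$ over the generic point $\eta$ of $\A^{\!m+1}_{K}$ is the spectrum of the degree-$k$ separable extension $\kappa(\mathfrak{P}_{K})/K(\mathbf{A})$, and by Remark \ref{remark: Galois action on split prime} the action of $\text{Gal}\big(\mathcal{F}_{\!\mathbf{A}},K(\mathbf{A})\big)$ on the factors in \eqref{eq: prime splitting as factorization} is identified with its action on the $k$ geometric points of $X_{\eta}$. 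Consequently, double transitivity of $\text{Gal}\big(\mathcal{F}_{\!\mathbf{A}},K(\mathbf{A})\big)\subset S_{k}$ is equivalent to the $0$-dimensional $K(\mathbf{A})$-scheme $(X_{\eta}\times_{K(\mathbf{A})}X_{\eta})\setminus\Delta$ being the spectrum of a single field. This in turn will follow from irreducibility of
$$
W\ \Def\ \big(X\times_{\A^{\!m+1}_{K}}X\big)\setminus\Delta
$$
as a $K$-scheme, together with dominance of the first projection $W\to\A^{\!m+1}_{K}$.

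To prove $W$ is irreducible, I will exploit the second projection $p\colon W\to U$ onto $U\Def(C\backslash E)_{K}\times(C\backslash E)_{K}\setminus\Delta_{C}$, sending $(\mathbf{a},x,y)\mapsto(x,y)$. The fiber of $p$ over a closed point $(x_{0},y_{0})\in U$ is the affine subset of $\A^{\!m+1}_{K}$ cut out by the pair of inhomogeneous linear equations
$$
a_{0}+\sum_{i=1}^{m}a_{i}f_{i}(x_{0})\ =\ -f(x_{0}),\qquad a_{0}+\sum_{i=1}^{m}a_{i}f_{i}(y_{0})\ =\ -f(y_{0}).
$$
Both hypotheses (a) and (b) require $E\ge E_{0}$ for some very ample divisor $E_{0}$, whence $E$ itself is very ample and the morphism $\varphi_{E}=[1:f_{1}:\cdots:f_{m}]\colon C\to\PP^{m}$ is a closed embedding. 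In particular $\varphi_{E}$ separates the distinct points $x_{0}\ne y_{0}$, forcing the two vectors $(1,f_{1}(x_{0}),\dots,f_{m}(x_{0}))$ and $(1,f_{1}(y_{0}),\dots,f_{m}(y_{0}))$ to be linearly independent in $K^{m+1}$. The linear system above is therefore of rank $2$, and every fiber of $p$ is an affine space of dimension $m-1$.

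Covering $U$ by the opens $U_{ij}$ on which the $2\times 2$ minor in columns $i$ and $j$ of the coefficient matrix is invertible, and applying Cramer's rule to solve for $a_{i},a_{j}$ in terms of the remaining $a_{\ell}$, yields trivializations $W|_{U_{ij}}\cong U_{ij}\times\A^{\!m-1}_{K}$, so $p$ is a Zariski-locally trivial $\A^{\!m-1}$-bundle over $U$. Because $C$ is geometrically irreducible, $U$ is irreducible, and hence so is $W$. Dominance of $W\to\A^{\!m+1}_{K}$ is then immediate: for any $\mathbf{a}$ in the nonempty open locus where $\mathcal{F}_{\!\mathbf{a}}$ has $k\ge 2$ pairwise distinct roots, the fiber of $W$ is nonempty. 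The generic fiber of $W\to\A^{\!m+1}_{K}$ is therefore an irreducible $0$-dimensional $K(\mathbf{A})$-scheme, hence the spectrum of a single field; identifying it with $(X_{\eta}\times X_{\eta})\setminus\Delta$ produces the desired double transitivity.

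The main obstacle in this outline is the linear-independence step in the second paragraph, which is where the geometric hypothesis on $E$ actually enters; the rest of the argument is a standard fibration computation. It is worth emphasizing that only the very ampleness of $E$ is used here, so the stronger positivity conditions $E\ge 3E_{0}$ in (a) and $E\ge 2E_{0}$ in (b) are not yet needed—these will be invoked in \S\ref{subsection: presence of a transposition} when exhibiting a transposition in the Galois group.
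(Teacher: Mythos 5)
Your argument is correct, but it follows a genuinely different route from the paper's. The paper proves double transitivity by exhibiting a point stabilizer that acts transitively: it fixes a zero $\mathfrak{q}$ of $f$, specializes $\mathbf{A}$ to the generic point $\mathfrak{h}$ of a hyperplane chosen so that $\mathcal{F}_{\!\mathfrak{h}}$ factors as $\ell\cdot\mathcal{F}_{\!\mathbf{A}'}$ with $\mathcal{F}_{\!\mathbf{A}'}$ the generic element of an auxiliary short interval $I(f',E')$ of degree $k-1$, and then runs a decomposition-group argument at the divisorial valuation $\mathfrak{h}$ using transitivity (Lemmas \ref{lemma: primality of (F_A)} and \ref{lemma: separability of (F_A)}) for that smaller interval; this is where the hypothesis that $E'-\mathfrak{q}$ remain effective and very ample (Remark \ref{remark: regarding the star condition}), hence conditions (a)/(b), enters. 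You instead translate double transitivity into irreducibility of the off-diagonal locus $W$ of the fiber square of the incidence variety and prove that by fibering over pairs of distinct points of the curve, the fibers being affine subspaces of codimension $2$ precisely because a very ample system separates points. The translation (components of $X_{\eta}\times_{K(\mathbf{A})}X_{\eta}$ correspond to orbits of the Galois group on ordered pairs of roots, with the diagonal always a single orbit) is standard and correct, the set-theoretic identification of $W$ with the preimage of $U$ is valid since $(X\times_{\A^{\!m+1}}X)\times_{(C\backslash E)^{2}}\Delta_{C}=X$, and the passage from irreducibility of $W$ plus nonemptiness of $W_{\eta}$ (here $k\ge 2$) to $W_{\eta}$ being the spectrum of a single field is sound, reducedness of $W_{\eta}$ coming from separability. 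Your approach buys two things: it avoids the induction on the auxiliary interval entirely, and it shows that for this proposition only very ampleness of $E$ is needed, so the stronger positivity in (a)/(b) is genuinely reserved for producing the transposition. The paper's approach, by contrast, stays closer to the genus-zero specialization argument of \cite{BBR15}. One small point of care you should make explicit: the local triviality over the opens $U_{ij}$ requires that these opens cover $U$, which follows because the rank-$2$ condition holds at every closed point and the complement of $\bigcup U_{ij}$ is closed.
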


\begin{remark}\label{remark: regarding the star condition}
	Note that each of the conditions (a) and (b) of Theorem \ref{theorem: Galois group is S_k} imply the following weaker condition: for any degree-$1$ point $\mathfrak{q}$ in the support of $E$, the divisor $E-\mathfrak{q}$ is again effective and very ample.
\end{remark}

\begin{proof}[{\it Proof of Proposition \ref{proposition: doubly transitive on points}}.]
	Because $\text{Gal}\big(\mathcal{F}_{\!\mathbf{A}}\big/\overline{\FF_{\!q}}(\mathbf{A})\big)$ is transitive, it is enough to show that there exists a factor $\mathfrak{Q}_{i}$ in \eqref{eq: prime splitting as factorization} for which the stabilizer subgroup of $\mathfrak{Q}_{i}$ inside $\text{Gal}\big(\mathcal{F}_{\!\mathbf{A}}\big/\overline{\FF_{\!q}}(\mathbf{A})\big)$ is transitive on the set of factors $\{\mathfrak{Q}_{j}\}_{j\ne i}$.
	
	Fix a single $\overline{\FF_{\!q}}$-valued point $\mathfrak{q}\in V(f)\subset C_{\lilF}\!\smallsetminus\! E_{\lilF}$. Choose a hyperplane $L\subset\PP^{m}_{\lilF}$ such that the only point of $V(f)$ in $C_{\lilF}\cap L$ is $\mathfrak{q}$. Define $E'$ to be the effective divisor associated to the weighted intersection $C_{\lilF}\cap L$. Choose a linear form $\ell\in\overline{\FF_{\!q}}[x_{0},\dots,x_{m}]$ satisfying $E'=\text{div}(\ell)+E_{\lilF}$, and let $\mathfrak{h}\in\A^{\!m+1}_{\lilF}$ be the generic point of the hyperplane cut out by the equation $\sum_{i=0}^{m}A_{i}\ \!\ell_{i}=0$. Let $\mathcal{F}_{\!\mathfrak{h}}$ denote the restriction of $\mathcal{F}_{\!\mathbf{A}}$ to $\sSpec{R_{\kappa(\mathfrak{h})}}$. Then $\mathcal{F}_{\!\mathfrak{h}}$ factors as
	$$
	\mathcal{F}_{\!\mathfrak{h}}
	\ \ =\ \ 
	\ell\ \Big(\ \!f'+A'_{0}+\sum_{i=1}^{m-1}A'_{i}\ f'_{i}\ \Big)
	$$
where $\{1,f'_{1},\dots,f'_{m-1}\}$ is a basis of the subspace of $H^{0}\big(C_{\lilF},\mathscr{O}(E_{\lilF})\big)$ corresponding the hyperplane $V(\mathfrak{h})\subset \A^{\!m+1}_{\lilF}$, and where $f'$ is a regular function on $C_{\lilF}\!\smallsetminus\!(E'-\mathfrak{q})$ satisfying
	\begin{equation}\label{equation: new inequality}
	\text{div}(f')_{-}\ >\ E'-\mathfrak{q}.
	\end{equation}
The linear equivalence $E'\sim E_{\lilF}$ makes $E'$ very ample, thus $\text{dim}_{\ \!}H^{0}\big(C_{\lilF},\mathscr{O}(E'-\mathfrak{q})\big)=m$ with basis $\{1,f'_{1},\dots,f'_{m-1}\}$. This implies that the linear combination
	$$
	\mathcal{F}_{\!\mathbf{A}'}
	\ \ \Def\ \ 
	f'+A'_{0}+\sum_{i=1}^{m-1}A'_{i}\ f'_{i}
	$$
is the generic element of the interval $I(f',E')$. By Remark \ref{remark: regarding the star condition}, $E'-\mathfrak{q}$ is effective and very ample. Hence \eqref{equation: new inequality} and Lemmas \ref{lemma: primality of (F_A)} and \ref{lemma: separability of (F_A)} provide us with a Galois group $\text{Gal}\big(\mathcal{F}_{\!\mathbf{A}'},\overline{\FF_{\!q}}(\mathbf{A}')\big)$.
	
	Let $R'$ denote the coordinate ring of the affine curve $C_{\lilF}\!\smallsetminus\! E'$. Observe that since $\text{deg}_{\ \!}E'=\text{deg}_{\ \!}E$, the inequalities \eqref{eq: important inequality at E} imply that $\mathfrak{P}_{\lilF}$ lies in $C_{\lilF (\mathbf{A})}\!\smallsetminus\! E'_{\lilF (\mathbf{A})}$. Consider the point $\mathfrak{P}'\overset{{}_{\text{def}}}{=}(\mathcal{F}_{\!\mathbf{A}\!'})\in\text{Spec}_{\ \!} R'(\mathbf{A}\!')$ inside $V(\mathcal{F}_{\mathbf{A}})\subset\sSpec{R'[\mathbf{A}\!']}$. Because Lemma \ref{lemma: separability of (F_A)} says that $\mathfrak{P}'$ is separable, whereas $\mathfrak{h}$ is a codimension-$1$ point in $\A^{\!m+1}_{\lilF}$, the point $\mathfrak{P}'$ corresponds to a discrete valuation on $\kappa(\mathfrak{P}_{\lilF})$. Thus the Galois group $\text{Gal}\big(\text{split}(\mathfrak{P}_{\lilF})\big/\kappa(\mathfrak{P}_{\lilF})\big)$ acts transitively on the roots of any monic polynomial whose roots generate the extension $\text{split}(\mathfrak{P}')\big/\kappa(\mathfrak{P}')$. Because $\text{Gal}\big(\text{split}(\mathfrak{P}_{\lilF})\big/\kappa(\mathfrak{P}_{\lilF})\big)$ is a subgroup of $\text{Gal}\big(\mathcal{F}_{\!\mathbf{A}},\overline{\FF_{\!q}}(\mathbf{A})\big)$, this completes the proof.
\end{proof}

\end{subsection}

%%%%%%%%%%%%%%%%%%%%%%%%%%%%%%%%%%%%%%

\begin{subsection}{Presence of a transposition}\label{subsection: presence of a transposition}
	Fix an algebraic closure $L\Def \overline{\FF_{\!q}(\mathbf{A})}$, and define $L'\subset L$ to be the algebraic closure $L'\Def\overline{\FF_{\!q}(A_{1},\dots,A_{m-1})}$ inside $L$.
	
	Consider the morphism $C_{\FF_{\!q}(\mathbf{A})}\longrightarrow \PP^{1}_{\FF_{\!q}(\mathbf{A})}=\text{Proj}_{\ \!}\FF_{\!q}(\mathbf{A})[t_{0},t_{1}]$. It restricts to the morphism of affine schemes
	\begin{equation}\label{equation: restricted curve map over Omega}
	C_{\FF_{\!q}(\mathbf{A})}\!\smallsetminus\! E_{\FF_{\!q}(\mathbf{A})}\longrightarrow \sSpec{\FF_{\!q}(\mathbf{A})[t]}=D(t_{0})
	\end{equation}
dual to the morphism of $\FF_{\!q}(\mathbf{A})$-algebras $\FF_{\!q}(\mathbf{A})[t]\longrightarrow R_{\FF_{\!q}(\mathbf{A})}$ that takes $t\ \mapsto\ \mathcal{F}_{\!\mathbf{A}}-A_{0}$. Since $E$ is effective and very ample, we can choose a lift $f(\mathbf{x})\in\FF_{\!q}[x_{1},\dots,x_{m}]$ of $f$ as in the proof of Lemma \ref{lemma: separability of (F_A)}, and \eqref{equation: restricted curve map over Omega} becomes the restriction of the morphism
	$$
	\Psi:\sSpec{\FF_{\!q}(\mathbf{A})[x_{1},\dots,x_{m}]}\longrightarrow \sSpec{\FF_{\!q}(\mathbf{A})[t]}
	$$
which takes
	$$
	\mathbf{x}
	\ \ \mapsto\ \ 
	\Psi(\mathbf{x})\ \Def\ f(\mathbf{x})+\sum_{i=1}^{m}A_i\ x_i.
	$$

\begin{proposition}\label{proposition: at most double roots}
	\normalfont
	At each point in $C_{L}\!\smallsetminus\! E_{L}$, the ramification order of $\Psi$ is at most $1$.
\end{proposition}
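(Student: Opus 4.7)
The plan is to prove Proposition~\ref{proposition: at most double roots} by a dimension count on an incidence variety. Define
\[
Z \;=\; \bigl\{(p,\mathbf{a}) \in (C\backslash E)\times \A^{m+1}_{\FF_q} : \mathrm{ord}_p\bigl(h_{\mathbf{a}} - h_{\mathbf{a}}(p)\bigr) \ge 3\bigr\},
\]
where $h_{\mathbf{a}} = f + a_0 + \sum_{i=1}^m a_i f_i$. An $L$-point of $C_L\backslash E_L$ at which $\Psi$ has ramification order $\ge 2$ is precisely an $L$-point of $Z$ lying over the generic point of $\A^{m+1}$, so it suffices to show $\dim Z < m+1$; this forces the scheme-theoretic generic fiber of $Z\to\A^{m+1}$ to be empty, and hence its base change to $L$ is empty as well.

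The heart of the argument is a fiber computation. Fix a closed point $p \in C\backslash E$, choose a local uniformizer $t$ at $p$, and expand $f = \sum_j c_j t^j$ and $f_i = \sum_j b_{i,j}t^j$. Then the condition cutting out the fiber $Z_p \subset \A^{m+1}$ becomes the simultaneous vanishing of the two affine-linear functionals $c_1 + \sum_i a_i b_{i,1}$ and $c_2 + \sum_i a_i b_{i,2}$. If these are linearly independent then $\dim Z_p = m-1$ and $\dim Z \le 1 + (m-1) = m$, as required. Their independence amounts to the vectors $(b_{i,1})_i$ and $(b_{i,2})_i$ in $\kappa(p)^m$ being linearly independent, which in turn is equivalent to the $3$-jet evaluation map
\[
H^0\bigl(C,\mathscr{O}(E)\bigr) \;\longrightarrow\; \mathscr{O}_{C,p}/\mathfrak{m}_p^3
\]
being surjective.

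To verify this $3$-jet surjectivity I would apply the short exact sequence $0 \to \mathscr{O}(E-3p) \to \mathscr{O}(E) \to \mathscr{O}_{C,p}/\mathfrak{m}_p^3 \to 0$, which reduces surjectivity to the vanishing $H^1(C,\mathscr{O}(E-3p)) = 0$. Serre duality converts this vanishing to the condition $\deg E > 2g+1$, a bound that is comfortably satisfied in both cases (a) and (b) of Theorem~\ref{theorem: Galois group is S_k} (giving $\deg E \ge 6g+3$ and $\deg E \ge 4g+2$ respectively). The main point requiring care is the characteristic-$2$ setting of case~(b), where ramification index $2$ is already wildly ramified and the naive differential-theoretic notion of ramification degenerates; one must instead work intrinsically with the order $\mathrm{ord}_p(h_{\mathbf{a}} - h_{\mathbf{a}}(p))$. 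With this definitional care in place, the dimension count above proceeds uniformly across all characteristics, and conditions (a) and (b) are handled simultaneously.
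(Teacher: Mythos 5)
Your incidence-variety argument is a genuinely different route from the paper's: the paper works over $L=\overline{\FF_{\!q}(\mathbf{A})}$ with the critical scheme $V(\det M)\cap(C_L\backslash E_L)$ and shows it is reduced by a transversality argument in the single coefficient $A_m$ (the singular locus is cut out by an equation with no $A_m$-dependence, hence cannot meet the zero locus of $\mathcal{G}_{\mathbf{A}}+A_m$), whereas you fibre an incidence variety over $\A^{\!m+1}$ and do linear algebra in each fibre. Your outline is sound, and your observation that in characteristic $2$ one should work with $\mathrm{ord}_p(h_{\mathbf{a}}-h_{\mathbf{a}}(p))$ rather than with $\mathrm{ord}_p(d\Psi)$ is a genuine improvement in precision over the paper, whose two readings of ``ramification order'' do not agree at wildly ramified points.

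The gap is in the justification of the key step, $3$-jet surjectivity. Proposition \ref{proposition: at most double roots} is stated under only the standing hypotheses of \S\ref{subsection: setting} ($E$ effective and very ample, plus \eqref{eq: important inequality at E}); very ampleness gives surjectivity onto $\mathscr{O}_{C,p}/\mathfrak{m}_p^2$ but not onto $\mathscr{O}_{C,p}/\mathfrak{m}_p^3$, and the bound $\deg E>2g+1$ you invoke is simply not available. Your parenthetical ``$\deg E\ge 6g+3$ and $\deg E\ge 4g+2$'' imports the numerical conditions (i) and (ii) of Theorem \ref{theorem: main theorem}, but conditions (a) and (b) of Theorem \ref{theorem: Galois group is S_k} --- the ones actually in force in \S\ref{section: calculation of the Galois group} --- say only that $E\ge 3E_0$ (resp.\ $2E_0$) for \emph{some} very ample effective $E_0$, and a very ample divisor can have degree far below $2g+1$ (e.g.\ the hyperplane class of a smooth plane curve of large degree). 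So as written the proof neither covers the proposition's stated generality nor correctly derives its extra hypothesis from (a)/(b). Some such hypothesis does seem to be genuinely needed --- for $C=\PP^1$, $E=\infty$, $f=t^4$ in characteristic $3$ one has $d\Psi=(t^3+A_1)\,dt=(t+A_1^{1/3})^3\,dt$, so the conclusion fails under very ampleness alone --- which means your instinct is reasonable, but you must then either prove $3$-jet surjectivity directly from $E\ge 2E_0$ with $E_0$ very ample (a $2$-very-ampleness statement, not a Riemann--Roch degree count), or explicitly restate the proposition with the hypothesis your proof uses. You should also record the easy half of the fibre estimate: when the two linear functionals are dependent the fibre $Z_p$ can still have dimension $m$, so one must separately argue that the locus of such $p$ is finite (or that those fibres are empty); without $3$-jet surjectivity this is exactly where the dimension count can jump to $m+1$, as the characteristic-$3$ example shows.
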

\begin{proof}
	Let $\Omega^{1}_{C_{L}\!\smallsetminus\! E_{L}}$ denote the $R_{L}$-module of K\"ahler differentials on $C_{L}\!\smallsetminus\! E_{L}=\sSpec{R_{L}}$, and let $d\Psi\in \Omega^{1}_{C_{L}\!\smallsetminus\! E_{L}}$ denote the K\"ahler differential of $\Psi$. In $D(x_{0})$, on a sufficiently small affine open neighborhood $U_{\mathbf{x}}\subset D(x_{0})$ of each point $\mathbf{x}\in C_{L}\!\smallsetminus\! E_{L}$, we have a matrix $M$ as in equation \eqref{equation: key matrix 1}, where the regular functions $r_{i}$ cut out $U_{\mathbf{x}}\cup\big(C_{L}\!\smallsetminus\! E_{L}\big)$. Since $d\Psi=d(\Psi+A_{0})=d\mathcal{F}_{\!\mathbf{A}}$, the ramification divisor of $\Psi$ is the effective divisor corresponding to the subscheme $V\big(\text{det}(M)\big)\cap\big(C_{L}\!\smallsetminus\! E_{L}\big)$ inside $U_{\mathbf{x}}\subset\sSpec{L[x_{1},\dots,x_{m}]}$. The points of $U_{\mathbf{x}}\cap\big(C_{L}\!\smallsetminus\! E_{L}\big)$ where $\Psi$ has ramification order $1$ are exactly the reduced points of $V\big(\text{det}(M)\big)\cap\big(C_{L}\!\smallsetminus\! E_{L}\big)$. Thus is suffices to prove that the $L$-scheme $V\big(\text{det}(M)\big)\cap\big(C_{L}\!\smallsetminus\! E_{L}\big)$ is smooth.
	
	For each $1\le i\le m$, let $M_{mi}$ denote the minor of $M$ that we obtain by removing the $m^{\text{th}}$-row and $i^{\mathrm{th}}$-column of $M$, so that
	$$
	\text{det}(M)
	\ \ =\ \ 
	\sum_{i=1}^{m}(-1)^{m+i}\ \text{det}(M_{mi})\ \Big(\ \frac{\partial f}{\partial x_{i}}+A_{i}\ \Big).
	$$
	From the proof of Lemma \ref{lemma: separability of (F_A)}, we know that $\text{det}(M_{mm})$ is nonzero everywhere on $U_{\mathbf{x}}\cap\big(C_{L}\!\smallsetminus\! E_{L}\big)$. Therefore $\text{det}(M_{mm})$ is invertible on some open neighborhood of $U_{\mathbf{x}}\cap\big(C_{L}\!\smallsetminus\! E_{L}\big)$ inside $U_{\mathbf{x}}$. In this neighborhood, the vanishing locus of $\frac{\text{det}(M)}{\text{det}(M_{mm})}$ coincides with $V\big(\text{det}(M)\big)$. Write
	\begin{equation}\label{equation: modified function}
	\frac{\text{det}(M)}{\text{det}(M_{mm})}
	\ \ =\ \ 
	\mathcal{G}_{\mathbf{A}}+A_{m},
	\end{equation}
	where $\mathcal{G}_{\mathbf{A}}$ is a regular function with no $A_{m}$ dependence. Then $V\big(\text{det}(M)\big)$ is singular at precisely those points where the determinant of the $m\times m$-matrix
	$$
	M'
	\ \ \Def\ \ 
	\left(\!\!
	\begin{array}{ccccc}
	\frac{\partial r_{1}}{\partial x_{1}}
	&
	\cdots
	&
	\frac{\partial r_{1}}{\partial x_{m}}
	\\[2pt]
	\vdots
	&
	\ddots
	&
	\vdots
	\\[2pt]
	\frac{\partial r_{m-1}}{\partial x_{1}}
	&
	\cdots
	&
	\frac{\partial r_{m-1}}{\partial x_{m}}
	\\[6pt]
	& & \ 
	\\[-10pt]
	\!\!\frac{\partial \mathcal{G}_{\mathbf{A}}}{\partial x_{1}}\!\!\!
	&
	\cdots
	&
	\!\!\frac{\partial \mathcal{G}_{\mathbf{A}}}{\partial x_{m}}\!\!\!
	\end{array}
	\!\!\right)
	$$
	vanishes. The absence of $A_{m}$ from $\text{det}(M')$ means that the zeros of $\text{det}(M')$ are defined over the subfield $L'\subset L$, whereas the zeros of \eqref{equation: modified function} are defined over the subfield $\overline{\FF_{\!q}(A_{m})}\subset L$. Because zeros of \eqref{equation: modified function} are not $\overline{\FF_{\!q}}=L'\cap\overline{\FF_{\!q}(A_{m})}$-rational, this completes the proof.
\end{proof}

\begin{proposition}\label{proposition: existence of a ramified point}
\normalfont
	The morphism $\Psi:C_{L}\longrightarrow\PP^{1}_{L}$ is ramified at some point in $C_{L}\!\smallsetminus\! E_{L}$ in each of the following two cases:
	\begin{itemize}
	\item[{\bf (i)}]
	$g>0$;
	\item[{\bf (ii)}]\vskip .2cm
	$\text{deg}_{\ \!}E>1$.
	\end{itemize}
\end{proposition}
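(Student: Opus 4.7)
My plan is to apply Riemann--Hurwitz to the separable morphism $\Psi:C_{L}\to\PP^{1}_{L}$ of degree $k$ and then translate the conclusion into a vanishing statement for the K\"ahler differential $\omega:=d\mathcal{F}_{\!\mathbf{A}}$. By Lemma~\ref{lemma: separability of (F_A)}, $\Psi$ is separable, so Riemann--Hurwitz gives $\deg R_{\Psi}=2g-2+2k$. Since $\omega=\Psi^{*}(dt)$, comparing canonical divisors yields
\[
\text{div}(\omega)\ =\ R_{\Psi}\ -\ 2\ \!\Psi^{*}[\infty]\ =\ R_{\Psi}\ -\ 2\!\!\!\sum_{\mathfrak{p}\in\text{supp}(E_{L})}\!\!\!e_{\mathfrak{p}}\ \!\mathfrak{p},
\]
where $e_{\mathfrak{p}}=-\nu_{\mathfrak{p}}(\mathcal{F}_{\!\mathbf{A}})=-\nu_{\mathfrak{p}}(f)$. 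On $C_{L}\setminus E_{L}$, the form $\omega$ is regular and its zero divisor coincides with $R_{\Psi}|_{C_{L}\setminus E_{L}}$; so the proposition reduces to exhibiting a zero of $\omega$ on $C_{L}\setminus E_{L}$.

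At each $\mathfrak{p}\in\text{supp}(E_{L})$, the fact that $\mathcal{F}_{\!\mathbf{A}}$ has a genuine pole at $\mathfrak{p}$, combined with the separability of $\Psi$, forces $\omega$ to have a genuine pole at $\mathfrak{p}$ of some order $o_{\mathfrak{p}}\ge 1$; in particular all zeros of $\omega$ lie in $C_{L}\setminus E_{L}$. Setting $P=\sum_{\mathfrak{p}}o_{\mathfrak{p}}\ge s$, where $s=|\text{supp}(E_{L})|$, the identity $\deg\text{div}(\omega)=2g-2$ gives that the zero divisor of $\omega$ has degree $(2g-2)+P$. It therefore suffices to check $2g-2+P>0$ in each of the two cases. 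For case~(i), $g\ge 1$ gives $2g-2+P\ge 0+s\ge 1$ immediately, since $E$ is effective and nonzero forces $s\ge 1$.

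For case~(ii), the substantive situation is $g=0$, where the required bound becomes $P\ge 3$. Writing $\mathcal{F}_{\!\mathbf{A}}=u\,\pi^{-e_{\mathfrak{p}}}$ in a local uniformizer $\pi$ at $\mathfrak{p}$, a direct computation of $d\mathcal{F}_{\!\mathbf{A}}=\pi^{-e_{\mathfrak{p}}-1}(-e_{\mathfrak{p}}u+\pi u')\,d\pi$ yields
\[
o_{\mathfrak{p}}\ =\
\begin{cases}
e_{\mathfrak{p}}+1&\text{if}\ \ \text{char}_{\ \!}\FF_{\!q}\nmid e_{\mathfrak{p}}\ \ \text{(tame)},\\[2pt]
e_{\mathfrak{p}}-\nu_{\mathfrak{p}}(u')&\text{if}\ \ \text{char}_{\ \!}\FF_{\!q}\mid e_{\mathfrak{p}}\ \ \text{(wild)}.
\end{cases}
\]
In the tame regime, $P=k+s\ge(\deg E+1)+1\ge 4$, concluding case~(ii). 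The main obstacle is the wild regime, in which $\nu_{\mathfrak{p}}(u')$ could a priori be as large as $e_{\mathfrak{p}}-1$, collapsing $o_{\mathfrak{p}}$ down to $1$. I would handle this step using the standing strict inequality \eqref{eq: important inequality at E} together with the positivity hypotheses on $E$ in effect in Theorem~\ref{theorem: Galois group is S_k}: these ensure that the generic linear perturbation $\sum A_{i}f_{i}$ contributes a nonvanishing coefficient to the $\pi^{1}$-term of $u$ at each wild pole, which gives $\nu_{\mathfrak{p}}(u')=0$ and hence $o_{\mathfrak{p}}=e_{\mathfrak{p}}\ge 2$. Summing then yields $P\ge k\ge\deg E+1\ge 3$, completing case~(ii).
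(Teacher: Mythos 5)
Your overall strategy is the paper's: apply Riemann--Hurwitz to the separable degree-$k$ map and then control the ramification sitting over $\text{supp}(E_{L})$ by a local tame/wild analysis of $d\mathcal{F}_{\!\mathbf{A}}$ at each pole. (The paper phrases this via ramification orders $\text{ram}_{\mathfrak{p}}(\mathcal{F}_{\!\mathbf{A}})$ rather than pole orders of $\omega$; since $\text{ram}_{\mathfrak{p}}=2k_{\mathfrak{p}}-o_{\mathfrak{p}}$, the bookkeeping is equivalent.) The genuine gap is in your treatment of the wild case, which is the crux of the proposition. You claim that the generic perturbation $A_{0}+\sum A_{i}f_{i}$ forces the $\pi^{1}$-coefficient of the unit $u=\pi^{e_{\mathfrak{p}}}\mathcal{F}_{\!\mathbf{A}}$ to be nonzero, whence $\nu_{\mathfrak{p}}(u')=0$. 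This is false in general: in the uniformizer $z$ at $\mathfrak{p}$, the summand $A_{i}f_{i}$ has a pole of order at most $m_{\mathfrak{p}}=\nu_{\mathfrak{p}}(E)$ and $A_{0}$ is constant, so the perturbation only touches the coefficients of $z^{j}$ in $u$ for $j\ge k_{\mathfrak{p}}-m_{\mathfrak{p}}$, while the low-order coefficients of $u$ come from the fixed $\FF_{\!q}$-rational function $f$ and may well vanish. For example, with $C=\PP^{1}$, $E=2\infty$, $f=t^{100}$ and $p\mid 100$, one has $u=1+A_{0}z^{100}+(\text{terms of order}\ge 98)$, so $\nu_{\mathfrak{p}}(u')=97$, not $0$. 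What the generic perturbation actually buys---and this is the paper's argument---is a summand $\widetilde{\mathcal{G}}_{\mathbf{A}}(z)$ of $u$ whose order of vanishing lies between $k_{\mathfrak{p}}-m_{\mathfrak{p}}$ and $k_{\mathfrak{p}}-1$ and whose coefficients are indeterminates, so that its derivative contributes at order at most $k_{\mathfrak{p}}-2$ and cannot be cancelled by $d\widetilde{f}/dz$. This yields $\nu_{\mathfrak{p}}(u')\le k_{\mathfrak{p}}-2$, i.e.\ $o_{\mathfrak{p}}\ge 2$ (indeed $o_{\mathfrak{p}}\ge m_{\mathfrak{p}}+1$), but certainly not $o_{\mathfrak{p}}=e_{\mathfrak{p}}$.

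The same issue undercuts case (i). Your assertion that separability of $\Psi$ ``forces $\omega$ to have a genuine pole of order $o_{\mathfrak{p}}\ge 1$'' at every $\mathfrak{p}\in\text{supp}(E_{L})$ does not follow from separability, which only guarantees $\omega\ne 0$ globally; at a wild pole it is exactly the inequality $\nu_{\mathfrak{p}}(u')\le k_{\mathfrak{p}}-1$, and it must be established by the local computation above. Without it you cannot assert that all $2g-2+P$ zeros of $\omega$ lie in $C_{L}\backslash E_{L}$, so even the $g>0$ case is not closed as written. Finally, be aware that the corrected bound $o_{\mathfrak{p}}\ge 2$ gives only $P\ge 2\,\#\text{supp}(E_{L})$, which is insufficient for case (ii) when $g=0$ and $E_{L}$ is supported at a single point of multiplicity $\ge 2$; there you need the refined estimate $o_{\mathfrak{p}}\ge m_{\mathfrak{p}}+1$ coming from $\nu_{\mathfrak{p}}(\widetilde{\mathcal{G}}_{\mathbf{A}})\ge k_{\mathfrak{p}}-m_{\mathfrak{p}}$, which is implicit in the paper's local analysis.
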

\begin{proof}
	The rational function $\mathcal{F}_{\!\mathbf{A}}$ on $C_{L}$ determines a morphism $\mathcal{F}_{\!\mathbf{A}}:C_{L}\longrightarrow\PP^{1}_{L}$. By definition, $\Psi$ and $\mathcal{F}_{\!\mathbf{A}}$ differ by the constant $A_{0}$, and so it suffices to show that $\mathcal{F}_{\!\mathbf{A}}$ is ramified at some point of $C_{L}\!\smallsetminus\! E_{L}$.
	
	At each point $\mathfrak{p}\in C_{L}$, let $\text{ram}_{\mathfrak{p}}(\mathcal{F}_{\!\mathbf{A}})$ denote the ramification order of $\mathcal{F}_{\!\mathbf{A}}$ at $\mathfrak{p}$ (the order of vanishing of the K\"ahler differential $d\mathcal{F}_{\!\mathbf{A}}\in\Omega^{1}_{C_{L}}$ at $\mathfrak{p}$). Define
	$$
	\text{ram}_{C_{L}\!\smallsetminus\! E_{L}}(\mathcal{F}_{\!\mathbf{A}})
	\ \Def\!\!\!
	\underset{{\mathfrak{p}\in C_{L}\!\smallsetminus\! E_{L}}}{\mbox{\larger\larger $\sum$}}\!\!\!\text{ram}_{\mathfrak{p}}(\mathcal{F}_{\!\mathbf{A}})
	\ \ \ \ \ \ \ \ \ \ \ \mbox{and}\ \ \ \ \ \ \ \ \ \ \ 
	\text{ram}_{E_{L}}(\mathcal{F}_{\!\mathbf{A}})
	\ \ \ \Def\!\!\!\!\!
	\underset{{\mathfrak{p}\in \text{supp}_{\ \!}(E_{L})}}{\mbox{\larger\larger $\sum$}}\!\!\!\!\!\text{ram}_{\mathfrak{p}}(\mathcal{F}_{\!\mathbf{A}}).
	$$
Then $\text{ram}_{C_{L}}(\mathcal{F}_{\!\mathbf{A}})=\text{ram}_{C_{L}\!\smallsetminus\! E_{L}}(\mathcal{F}_{\!\mathbf{A}})+\text{ram}_{E_{L}}(\mathcal{F}_{\!\mathbf{A}})$. Recall that $k=\text{deg}\big(\text{div}(f)_{-}\big)$. Lemmas \ref{lemma: primality of (F_A)} and \ref{lemma: separability of (F_A)} imply that the morphism $\mathcal{F}_{\!\mathbf{A}}:C_{L}\longrightarrow\PP^{1}_{L}$ is finite and separable, so satisfies Riemann-Hurwitz \cite[\S IV, Corollary 2.4]{Hartshorne}. Since $k$ is the degree of $\mathcal{F}_{\!\mathbf{A}}$, this gives
	$$
	2\ (g+k-1)
	\ \ =\ \ 
	\text{ram}_{C_{L}\!\smallsetminus\! E_{L}}(\mathcal{F}_{\!\mathbf{A}})+\text{ram}_{E_{L}}(\mathcal{F}_{\!\mathbf{A}}).
	$$
Thus it suffices to show that
	\begin{equation}\label{equation: ramification inequality}
	\text{ram}_{E_{L}}(\mathcal{F}_{\!\mathbf{A}})
	\ \ <\ \ 
	2\ (g+k-1).
	\end{equation}
	
	Fix a point $\mathfrak{p}\in\text{supp}_{\ \!}(E_{L})$, and fix a uniformizing parameter $z$ in the stalk $\mathscr{O}_{C_{L},\mathfrak{p}}$. Let $m_{\mathfrak{p}}$ denote the order of $E_{L}$ at $\mathfrak{p}$, and recall that $k_{\mathfrak{p}}$ denotes the degree of the pole of $f$ at $\mathfrak{p}$. Because $E$ is effective, our assumption \eqref{eq: important inequality at E} implies that $k_{\mathfrak{p}}\ge m_{\mathfrak{p}}>0$. Because $E$ is very ample, $H^{0}\big(C_{L},\mathscr{O}(E_{L})\big)$ is basepoint free, and thus there exists some nontrivial $\FF_{\!q}$-linear combination $\widetilde{A}_{0}$ of the variables $A_{0},\dots,A_{m}$ so that we can write the rational function $\sum_{i=0}^{m}A_{i}\ f_{i}$ on $C_{L}$ as
	$$
	\sum_{i=0}^{m}A_{i}\ f_{i}\big(\tfrac{1}{z}\big)
	\ =\ 
	\widetilde{A}_{0}+\mathcal{G}_{\!\mathbf{A}}\big(\tfrac{1}{z}\big),
	$$
where the order of the pole of $\mathcal{G}_{\mathbf{A}}\big(\frac{1}{z}\big)$ at $\mathfrak{p}$ is between 1 and $m_{\mathfrak{p}}$. Write
	$$
	\mathcal{F}_{\!\mathbf{A}}
	\ \ =\ \ 
	\frac{1}{z^{k_{\mathfrak{p}}}}
	\ 
	\big(
	\ 
	\widetilde{f}(z)+\widetilde{A}_{0}\ z^{k_{\mathfrak{p}}}+\widetilde{\mathcal{G}}_{\mathbf{A}}(z)
	\ 
	\big),
	$$
where $\widetilde{f}(z)$ is an $\FF_{\!q}$-rational function that does not vanish at $z=0$, and where the order of vanishing of $\widetilde{\mathcal{G}}_{\mathbf{A}}(z)$ at $z=0$ is between $k_{\mathfrak{p}}-1$ and $k_{\mathfrak{p}}-m_{\mathfrak{p}}$. Thus the order of vanishing of $d\big(\frac{1}{\mathcal{F}_{\!\mathbf{A}}}\big)$ at $z=0$ is equal to the order of vanishing of the function
	$$
	k_{\mathfrak{p}}\ z^{k_{\mathfrak{p}}-1}\ \big(\ \widetilde{f}(z)+\widetilde{A}_{0}\ z^{k_{\mathfrak{p}}}+\widetilde{\mathcal{G}}_{\mathbf{A}}(z)\ \big)
	\ +\ z^{k_{\mathfrak{p}}}\ \Big(\ \frac{d\widetilde{f}}{dz}+k_{\mathfrak{p}}\ \widetilde{A}_{0}\ z^{k_{\mathfrak{p}}-1}+\frac{d\widetilde{\mathcal{G}}_{\mathbf{A}}}{dz}\ \Big)
	$$
at $z=0$. This implies that:
	\begin{itemize}
	\item[$\bullet$]
	If $\text{char}_{\ \!}\FF_{\!q}$ does not divide $k_{\mathfrak{p}}$, then $\text{ram}_{\mathfrak{p}}(\mathcal{F}_{\!\mathbf{A}})=k_{\mathfrak{p}}-1$;
	\item[$\bullet$]\vskip .2cm
	If $\text{char}_{\ \!}\FF_{\!q}$ divides $k_{\mathfrak{p}}$, then $\text{ram}_{\mathfrak{p}}(\mathcal{F}_{\!\mathbf{A}})\le 2k_{\mathfrak{p}}-2$.\vskip .2cm
	\end{itemize}
Repeating this argument at all points $\mathfrak{p}$ in $\text{supp}_{\ \!}(E_{L})$, we see that
	$$
	\text{ram}_{E_{L}}(\mathcal{F}_{\!\mathbf{A}})
	\ \ \le\ \ 
	2k-2\ \#\text{supp}(E_{L}).
	$$
Thus \eqref{equation: ramification inequality} is satisfied whenever $g>0$ or $\text{deg}_{\ \!}E>1$.
\end{proof}

\begin{proposition}\label{proposition: system of equations has no solution}
\normalfont
	Assume that one of the following two conditions holds:
	\begin{itemize}
	\item[{\bf (a)}]
	There exists a very ample effective divisor $E_{0}$ on $C$ such that $E\ge 3E_{0}$;
	\item[{\bf (b)}]\vskip .1cm
	There exists a very ample effective divisor $E_{0}$ on $C$ such that $E\ge 2E_{0}$, $\text{char}_{\ \!}\FF_{\!q}=2$, and $df|_{C\!\smallsetminus\! E}$ vanishes at a nonempty finite set.
	\end{itemize}
Then the morphism $\Psi:C_{L}\longrightarrow\PP^{1}_{L}$ {\em separates critical points} in $C_{L}\!\smallsetminus\! E_{L}$, i.e., there do not exist distinct points $x,y\in C_{L}\!\smallsetminus\! E_{L}$ satisfying the system of equations
	\begin{equation}\label{equation: separating critical system}
	\begin{array}{rcll}
	d\Psi|_{x}
	&
	\!\!\!\!=\!\!\!\!
	&
	0
	\\[10pt]
	d\Psi|_{y}
	&
	\!\!\!\!=\!\!\!\!
	&
	0
	\\[10pt]
	\Psi(x)
	&
	\!\!\!\!=\!\!\!\!
	&
	\Psi(y).
	\end{array}
	\end{equation}
\end{proposition}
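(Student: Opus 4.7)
The strategy is a dimension count on an incidence variety. Form
$$
Z \;\subset\; \big\{(x,y) \in (C_L \setminus E_L)^2 : x \ne y\big\} \times \A^{m+1}_L
$$
cut out by the three equations in \eqref{equation: separating critical system}. Ruling out distinct critical-point pairs for the generic $\mathbf{A}$ is equivalent to showing that the projection $Z \to \A^{m+1}_L$ is not dominant, i.e., $\dim Z < m + 1$. For fixed $(x,y)$, the three conditions are linear in $\mathbf{A}$, and the coordinate $A_0$ drops out of all three (it cancels in $\Psi(x) - \Psi(y)$ and is killed by $d$). Hence the fiber of $Z \to \{x \ne y\}$ over $(x, y)$ has dimension $(m + 1) - r(x, y)$, where $r(x, y)$ is the rank of the associated $3 \times (m+1)$ coefficient matrix. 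If $r(x, y) = 3$ for $(x, y)$ in a dense open subset, then $\dim Z = 2 + (m - 2) = m < m + 1$, giving what we want.

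To verify the three equations are generically independent, dualize: the coefficient vectors define a linear map
$$
\mathrm{ev}_{x,y}: H^0\!\big(C_L, \mathscr{O}(E_L)\big) \longrightarrow \Omega^1_{C_L, x} \oplus \Omega^1_{C_L, y} \oplus L, \qquad s \mapsto \big(ds|_x,\; ds|_y,\; s(x) - s(y)\big),
$$
and we want this surjective. A sufficient condition is the surjectivity of $H^0\!\big(C_L, \mathscr{O}(E_L)\big) \twoheadrightarrow H^0\!\big(\mathscr{O}(E_L)|_{2x + 2y}\big)$, because the natural map from the $4$-dimensional jet space to our $3$-dimensional target (killing the diagonal constants $(c,0,c,0)$) is surjective. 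By the long exact sequence attached to $\mathscr{O}(E_L) \twoheadrightarrow \mathscr{O}(E_L)|_{2x + 2y}$, this follows from $H^1\!\big(C_L, \mathscr{O}(E_L - 2x - 2y)\big) = 0$, which by Serre duality is implied by the negativity of $\deg\!\big(\Omega^1_{C_L}(2x + 2y - E_L)\big) = 2g + 2 - \deg E$.

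Under hypothesis (a), $\deg E \ge 3(2g + 1) = 6g + 3 > 2g + 2$ for all $g \ge 0$, so the Serre-duality argument closes. Under hypothesis (b), $\deg E \ge 2(2g + 1) = 4g + 2 > 2g + 2$ as long as $g \ge 1$, handling all but genus zero. The main obstacle is the case $g = 0$ under (b), where the naive degree bound fails and one must exploit the extra input that $df|_{C \setminus E}$ vanishes on a nonempty finite set: this pins the candidate critical points of $\Psi$ to a distinguished divisor, and together with the characteristic-$2$ peculiarity that differentiation annihilates extra sections, should suppress the dimension of $Z$ enough to conclude. I expect this last edge case to be the most technical and to require a local analysis near a zero of $df$ rather than a purely cohomological input, paralleling the local jet computation already used in the proof of Proposition \ref{proposition: existence of a ramified point}.
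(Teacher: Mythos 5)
Your overall skeleton --- the incidence variety $Z$, the fiberwise linear algebra in $\mathbf{A}$, and the dimension count $\dim Z\le m$ --- is exactly the paper's strategy. But the step where you certify that the three linear conditions have rank $3$ for generic $(x,y)$ contains a genuine error: you deduce $\deg E\ge 3(2g+1)$ (resp.\ $2(2g+1)$) from hypothesis (a) (resp.\ (b)), which implicitly uses ``$E_{0}$ very ample $\Rightarrow\deg E_{0}\ge 2g+1$.'' That implication is false; it is the converse that holds. A smooth plane sextic ($g=10$) with $E_{0}$ its degree-$6$ hyperplane class satisfies (a) with $\deg E=18<2g+2=22$, so the vanishing $H^{1}\big(C,\mathscr{O}(E-2x-2y)\big)=0$ is not forced and your Serre-duality route collapses even in case (a). The surjectivity of $\mathrm{ev}_{x,y}$ is still true there, but it must be extracted from the hypothesis $E\ge 3E_{0}$ directly: this is what the paper does, using Bertini for the embedding by $E_{0}$ to produce a linear form $t$ with $t(x)\ne t(y)$ that uniformizes at both points, so that $1,t,t^{2},t^{3}\in H^{0}\big(C,\mathscr{O}(E)\big)$ already span the $3$-dimensional target (the relevant $3\times 3$ determinant is $(t(x)-t(y))^{4}$ up to sign).

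The second, and larger, gap is case (b), which you defer. There the difficulty is not confined to $g=0$: with only $E\ge 2E_{0}$ available one gets sections $1,t,t^{2}$, and in characteristic $2$ the images of $t$ and $t^{2}$ in $\Omega^{1}_{x}\oplus\Omega^{1}_{y}\oplus L$ span only a $2$-dimensional subspace (since $d(t^{2})=0$), so the \emph{homogeneous} rank-$3$ claim that your dimension count rests on can genuinely fail (e.g.\ $C=\PP^{1}$, $E=2\infty$). One is forced to use the inhomogeneous part of the system, i.e.\ the term coming from $f$ itself. The paper handles both cases uniformly by reducing the $3\times(m+1)$ augmented system to a single determinant identity $d(u,v)=0$ on $U_{xy}\times\A^{m}$ and checking it is not identically zero: in case (a) by exhibiting a nonzero coefficient of $B_{3}$, and in case (b) by observing $d(u,v)=(t(v)-t(u))^{2}(\varphi(u)+\varphi(v))$ in characteristic $2$, which is nonzero precisely because $df|_{C\backslash E}$ is nonconstant. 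Your proposal correctly identifies that $df$ must enter, but does not carry out this computation, and as written neither case is closed.
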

\begin{proof}
	It suffices to prove that the morphism $\mathcal{F}_{\!\mathbf{A}}:C_{L}\longrightarrow\PP^{1}_{L}$ separates critical points. Assume that $E\ge n\ \!E_{0}$, with $E_{0}$ a very ample effective divisor on $C_{L}$, and with $n=2$ or $3$. Let $m_{0}=\text{dim}_{\ \!}H^{0}\big(C,\mathscr{O}(E_{0})\big)-1$. Interpret $C$ as a closed subvariety of $\PP^{m_{0}}$ via the closed embedding provided by $E_{0}$. The standard proof of Bertini's Theorem \cite[\S II.8, proof of Theorem 8.18]{Hartshorne} implies that for any two distinct points $x,y\in C_{\lilF}\!\smallsetminus\! E_{\lilF}$, we can choose a linear form $t$ on $\PP^{m_{0}}_{\lilF}$ whose restriction to $C_{\lilF}$ provides local uniformizing parameters $t-t(x)$ at $x$ and $t-t(y)$ at $y$. We can furthermore choose $t$ so that it satisfies the generic condition
	\begin{equation}\label{equation: condition L2}
	t(x)\ \ne\ t(y).
	\end{equation}
	
	Since $t\in H^{0}\big(C_{\lilF},\mathscr{O}(E_{0,\lilF})\big)$ and $E\ge n E_{0}$, we have $1,t,t^{2},\dots,t^{n}\in H^{0}\big(C_{\lilF},\mathscr{O}(E_{\lilF})\big)$.
Choose a new basis $\{1,g_{1},g_{2},\dots,g_{m}\}$ of $H^{0}\big(C_{\lilF},\mathscr{O}(E_{\lilF})\big)$ such that $g_{i}=t^{i}$ for $0\le i\le n$. Let $\{B_{0},B_{1},B_{2},\dots\}$ denote linear generators of $\overline{\FF_{\!q}}[\mathbf{A}]$ in this new basis, with $B_{0}=A_{0}$. Then
	\begin{equation}\label{equation: special decomposition of F_A}
	\mathcal{F}_{\!\mathbf{A}}
	\ =\ 
	f+A_{0}+B_{1}\ \!t+B_{2}\ \!t^{2}+\cdots+B_{n}\ \!t^{n}+\mathcal{G}_{\mathbf{A}},
	\end{equation}
where $\mathcal{G}_{\mathbf{A}}=\sum_{i=n+1}^{m}B_{i}\ \!g_{i}$. Define
	$$
	\Phi
	\ \ \overset{{}_{\text{def}}}{=}\ \ 
	\mathcal{F}_{\!\mathbf{A}}-B_{1}\ \!t-B_{2}\ \!t^{2}.
	$$
	
	Again by the dimension counts in \cite[\S II.8, proof of Theorem 8.18]{Hartshorne}, we can fix a Zariski open neighborhood $U\subset C_{\lilF}\!\smallsetminus\! E_{\lilF}$ containing both $x$ and $y$, such that the restriction of $t$ to $C_{\lilF}\!\smallsetminus\! E_{\lilF}$ provides a uniformizing parameter $t-t(u)$ at every point $u\in U$. Define
	$$
	U_{xy}
	\ \overset{{}_{\text{def}}}{=}\ 
	\big(U\times_{\lilF}U\big)\!\smallsetminus\!\big\{\text{diagonal\ in\ }C_{\lilF}\times C_{\lilF}\big\}.
	$$
At each $L$-valued point $(u,v)$ in $U_{xy}$, the system of equations \eqref{equation: separating critical system} holds for the function $\mathcal{F}_{\!\mathbf{A}}$ if and only if $(u,v)$ satisfies the single $\overline{\FF_{\!q}}$-valued matrix equation
	\begin{equation}\label{equation: separating matrix equation}
	\left(
	\begin{array}{cc}
	1
	&
	2\ t(u)
	\\[6pt]
	1
	&
	2\ t(v)
	\\[6pt]
	t(v)-t(u)
	&
	t(v)^{2}-t(u)^{2}
	\end{array}
	\right)
	\left(
	\begin{array}{c}
	B_{1}
	\\
	B_{2}
	\end{array}
	\right)
	\ =\ 
	\left(
	\begin{array}{c}
	-\frac{d\Phi}{dt}(u)
	\\[10pt]
	-\frac{d\Phi}{dt}(v)
	\\[10pt]
	\Phi(u)-\Phi(v)
	\end{array}
	\right),
	\end{equation}
where $\frac{d\Phi}{dt}$ denotes the regular function on $U$ such that $d\Phi=\frac{d\Phi}{dt}dt$ as a global section of $\Omega^{1}_{U}$. Define functions $\varphi$ on $U$ and $c$ on $U_{xy}$ according to
	$$
	\varphi
	\ \overset{{}_{\text{def}}}{=}\ 
	-\ \!\frac{d\Phi}{dt}
	\ \ \ \ \ \ \ \ \ \ \mbox{and}\ \ \ \ \ \ \ \ \ 
	c(u,v)
	\ \overset{{}_{\text{def}}}{=}\ 
	\Phi(u)-\Phi(v).
	$$
By \eqref{equation: condition L2}, the $3\!\times\!2$-matrix at left in \eqref{equation: separating matrix equation} has rank $2$ everywhere in $U_{xy}$. Hence \eqref{equation: separating matrix equation} holds at $(u,v)$ if and only if $(u,v)$ satisfies the single determinant equation
	\begin{equation}\label{equation: separating determinant equation}
	\text{det}
	\left(
	\begin{array}{ccc}
	1
	&
	2\ t(u)
	&
	\varphi(u)
	\\[10pt]
	1
	&
	2\ t(v)
	&
	\varphi(v)
	\\[10pt]
	\ t(v)-t(u)\ 
	&
	\ \ t(v)^{2}-t(u)^{2}\ \ 
	&
	\ c(u,v)\ 
	\end{array}
	\right)
	\ \ =\ \ 
	0.
	\end{equation}
	
	Interpret \eqref{equation: separating determinant equation} as an equation over $\overline{\FF_{q}}$ in the variables $u,v,A_{1},...,A_{m}$. Let $T\subset \A^{\!m}_{\lilF}\times_{\lilF}U_{xy}$ denote the subscheme cut out by this equation, with projections
	$$
	\xymatrix{\A^{\!m}_{\lilF} & T \ar@{->>}[r]^{\text{pr}_{2}} \ar@{->>}[l]_{\text{pr}_{1}} & U_{xy}}.
	$$
Letting $\eta$ denote the generic point of $\A^{\!m}_{\lilF}$, it suffices to prove that the fiber $\text{pr}^{-1}_{1}(\eta)\subset T$ is empty. Because the leftmost matrix in \eqref{equation: separating matrix equation} has rank $2$, each fiber of $\text{pr}_{2}$ in $T$ is at most $(m-2)$-dimensional. The generic fiber $\text{pr}^{-1}_{1}(\eta)$ of $T$ is cut out by a single equation in the $2$-dimensional space $(U_{xy})_{\overline{\FF_{q}}(\mathbf{A})}$. Thus if the determinant in \eqref{equation: separating determinant equation} is not constantly equal to $0$, we have
	$$
	\text{dim}_{\ \!}T
	\ \le\ 
	1+m-2
	\ <\ 
	m,
	$$
which implies that the image $\text{pr}_{1}(T)\subset\A^{\!m}_{\lilF}$ cannot contain the generic point of $\A^{\!m}_{\lilF}$. In order to show that \eqref{equation: separating determinant equation} has no solutions in $U_{xy}$, it thus remains to show that the determinant appearing in \eqref{equation: separating determinant equation} is not constantly equal to $0$.
	
	Let $d(u,v)$ be the determinant that appears in \eqref{equation: separating determinant equation}. A straightforward calculation gives
	$$
	d(u,v)
	\ =\ 
	\big(\ \!t(v)-t(u)\ \!\big)\ \Big(\ 2\ \!c(u,v) +\big(\ \!t(v)-t(u\ \!)\big)\ \big(\ \!\varphi(u)+\varphi(v)\ \!\big)\ \Big).
	$$
If $n\geq 3$, then the coefficient of $B_3$ in $2\ \!c(u,v) +\big(\ \!t(v)-t(u\ \!)\big)\ \!\big(\ \!\varphi(u)+\varphi(v)\ \!\big)$ is
	$$
	2\ \!\big(\ \!t(u)^{3}-t(v)^{3}\ \!) + 3\ \!\big(\ \!t(v)^{2}-t(u)^{2}\ \!\big).
	$$
By \eqref{equation: condition L2}, this last expression is nonzero in any characteristic. If $n=2$ and $\text{char}_{\ \!}\FF_{q}=2$, then
	$$
	d(u,v)
	\ =\ 
	\big(\ \!t(v)-t(u)\ \!\big)^{2}\ \big(\ \!\varphi(u)+\varphi(v)\ \!\big).
	$$
If $df$ is nonconstant in this case, then $\varphi(u)+\varphi(v)=\varphi(u)-\varphi(v)$ is not constantly zero.

	Because the Zariski open subsets $U_{xy}$ cover $\big(C_{\lilF}\times C_{\lilF}\big)\!\smallsetminus\!\{\text{diagonal}\}$ as $(x,y)$ varies inside $\big(C_{\lilF}\times C_{\lilF}\big)\!\smallsetminus\!\{\text{diagonal}\}$, this completes the proof.
\end{proof}

\begin{corollary}\label{corollary: contains a transposition}
\normalfont
	Assume that one of the following two conditions holds:
	\begin{itemize}
	\item[{\bf (a)}]
	There exists a very ample effective divisor $E_{0}$ on $C$ such that $E\ge 3E_{0}$;
	\item[{\bf (b)}]\vskip .1cm
	There exists a very ample effective divisor $E_{0}$ on $C$ such that $E\ge 2E_{0}$, $\text{char}_{\ \!}\FF_{\!q}=2$, and $df|_{C\!\smallsetminus\! E}$ vanishes on a nonempty finite set.
	\end{itemize}
Then the subgroup $\text{Gal}\big(\mathcal{F}_{\!\mathbf{A}}\big/\overline{\FF_{\!q}}(\mathbf{A})\big)\subset S_{k}$ contains a transposition.
\end{corollary}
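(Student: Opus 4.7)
The corollary follows by assembling the three preceding propositions with a monodromy/inertia argument. First, I would verify that in both cases (a) and (b), the hypothesis $E \ge n\, E_0$ with $n \in \{2,3\}$ and $E_0$ an effective very ample divisor (so $\text{deg}_{\ \!}E_0 \ge 1$) yields $\text{deg}_{\ \!}E \ge 2$. This places us in case (ii) of Proposition~\ref{proposition: existence of a ramified point}, which furnishes a ramification point $p \in C_L \setminus E_L$ of $\Psi$. Proposition~\ref{proposition: at most double roots} then pins this ramification down to $\text{ram}_p(\Psi) = 1$, so $\Psi$ has ramification index exactly $2$ at $p$.

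Set $b \Def \Psi(p) \in \PP^1_L$. Since $\Psi = \mathcal{F}_{\!\mathbf{A}} - A_0$ has all of its poles supported on $E_L$ (by the same argument as~\eqref{equation: important inequality for zeros of F_A}), we have $b \in \A^1_L$ and the entire fiber $\Psi^{-1}(b)$ lies in $C_L \setminus E_L$. Proposition~\ref{proposition: system of equations has no solution} then shows that $p$ is the unique critical point of $\Psi$ in $\Psi^{-1}(b)$. Hence the geometric branching pattern of $\Psi$ over $b$ is exactly the partition $(2, 1, 1, \dots, 1)$ of $k$: one point of index $2$ together with $k-2$ simple preimages.

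To pass from geometry to group theory, I would identify $\text{Gal}(\mathcal{F}_{\!\mathbf{A}}, \overline{\FF_{\!q}}(\mathbf{A}))$ with the monodromy group of the finite separable cover $\Psi\colon C_L \setminus E_L \to \A^1_L$. Under the embedding~\eqref{equation: inclusion from action on factors} this group acts faithfully on the $k$ sheets of $\Psi$, and for each place $\mathfrak{B}$ of the Galois closure lying over a branch point, the inertia subgroup $I_{\mathfrak{B}}$ acts on the fiber with cycle structure matching the local ramification pattern. Applied at $b$, the preceding two paragraphs tell us that $I_{\mathfrak{B}}$ is cyclic of order $2$ and its nontrivial element swaps the two sheets coming together at $p$ while fixing the remaining $k-2$ sheets, i.e.\ it is a transposition in $S_k$.

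The main obstacle I expect is packaging the inertia argument carefully: the branch point $b$ is a priori defined only over $L = \overline{\FF_{\!q}(\mathbf{A})}$, while the Galois group of interest is over $\overline{\FF_{\!q}}(\mathbf{A})$. Concretely, I would take $I_{\mathfrak{B}} \subset \text{Gal}(\text{split}(\mathfrak{P})/\overline{\FF_{\!q}}(\mathbf{A}))$ for some place $\mathfrak{B}$ of $\text{split}(\mathfrak{P})$ lying above $b$, check that its order is governed by the tame ramification index $2$ at $p$ (using $\text{ram}_p(\Psi) = 1$), and use the transitive $S_k$-action on the $\mathfrak{Q}_i$ in~\eqref{eq: prime splitting as factorization} together with Proposition~\ref{proposition: system of equations has no solution} to identify the cycle type of a generator of $I_{\mathfrak{B}}$ as $(2,1,\ldots,1)$. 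This is essentially a translation between the splitting-field language of \S\ref{subsection: splitting field of a point in a curve} and the classical decomposition/inertia language for function-field extensions, but it is the step that requires the most care.
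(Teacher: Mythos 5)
Your proposal is correct and follows essentially the same route as the paper: invoke Proposition \ref{proposition: existence of a ramified point} to get a ramification point of $\Psi$ in $C_{L}\backslash E_{L}$, Proposition \ref{proposition: at most double roots} to see its fiber has type $(2,1,\dots,1)$, Proposition \ref{proposition: system of equations has no solution} to see it is the only critical point in that fiber, and then read off a transposition from the inertia group over the corresponding branch point. The only (harmless) divergence is that you deduce $\deg E\ge 2$ directly from (a)/(b) and use case (ii) of Proposition \ref{proposition: existence of a ramified point} uniformly, where the paper splits into $g>0$ and $g=0$; your extra care about where the inertia argument lives relative to $\overline{\FF_{\!q}}(\mathbf{A})$ versus $L$ is a point the paper treats more briskly.
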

\begin{proof}
	If $g=0$, then each of the conditions (a) and (b) implies condition (ii) of Proposition \ref{proposition: existence of a ramified point}. Thus for any $g$, the morphism $\Psi$ is ramified at some closed point of $C_{L}\!\smallsetminus\! E_{L}$. Let $\alpha$ be such a point, which is to say that the morphism $\Psi:C_{L}\!\smallsetminus\! E_{L}\longrightarrow\A^{1}_{L}$ is ramified at $\alpha$. Proposition~\ref{proposition: at most double roots} says that the order of ramification at any point in $C_{L}\!\smallsetminus\! E_{L}$ is at most $1$. Thus the factorization type of the fiber of $\Psi$ containing $\alpha$ is $(2,1,\dots,1)$. As Proposition~\ref{proposition: system of equations has no solution} says that the critical values of $\Psi$ are distinct, this implies that $\Psi(\mathbf{x})=\Psi(\alpha)$ has at least $k-1$ solutions. However, since $\alpha$ is a ramification point, the fiber over $\Psi(\alpha)$ has exactly one double point. Hence the inertia group over $\Psi(\alpha)$ permutes two factors of $\mathcal{F}_{\!\mathbf{A}}=\Psi(\mathbf{x})+A_0$ and fixes all others. Thus $\text{Gal}\big(\mathcal{F}_{\!\mathbf{A}}\big/\overline{\FF_{\!q}}(\mathbf{A})\big)$ contains a transposition.
\end{proof}

\begin{proof}[{\it Completion of the proof of Theorem \ref{theorem: Galois group is S_k}}.]
	By Remark \ref{remark: regarding the star condition}, if either of the conditions (a) or (b) holds, then Proposition \ref{proposition: doubly transitive on points} holds and $\text{Gal}\big(\mathcal{F}_{\!\mathbf{A}}\big/\overline{\FF_{\!q}}(\mathbf{A})\big)$ is doubly transitive. By Corollary \ref{corollary: contains a transposition}, $\text{Gal}\big(\mathcal{F}_{\!\mathbf{A}}\big/\overline{\FF_{\!q}}(\mathbf{A})\big)$ contains a transposition. By Lemma \ref{lemma: serre S_k}, we have $\text{Gal}\big(\mathcal{F}_{\!\mathbf{A}}\big/\overline{\FF_{\!q}}(\mathbf{A})\big)\cong S_{k}$.
\end{proof}

\end{subsection}

\end{section}

\vskip .5cm

%%%%%%%%%%%%%%%%%%%%%%%%%%%%%%%%%%%%%%
%%%%%%%%%%%%%%%%%%%%%%%%%%%%%%%%%%%%%%

\begin{section}{Proof of Theorem \ref{theorem: main theorem}}\label{section: counting argument}
	
	We now use the Galois group calculation in \S\ref{section: calculation of the Galois group} to prove Theorem \ref{theorem: main theorem}.

\begin{subsection}{Setup for the proof of Theorem \ref{theorem: main theorem}}\label{subsec: factorization type}
	Let $\mathcal{F}_{\!\mathbf{A}}\in R[\mathbf{A}]$ be the element defined in \eqref{eq: formula for generic element of I}, and let $\text{pr}_{1}:V(\mathcal{F}_{\!\mathbf{A}})\longrightarrow\A^{\!m+1}$ denote the resulting projection. For each $\FF_{\!q}$-rational point $\mathbf{a}\in\A^{\!m+1}$, let $\mathcal{F}_{\!\mathbf{a}}$ denote the restriction of $\mathcal{F}_{\!\mathbf{A}}$ to $R\cong \kappa(\mathbf{a})\otimes_{\FF_{\!q}[\mathbf{A}]}R[\mathbf{A}]$.

\begin{proposition}\label{prop: deg of Z}
\normalfont
	Let $E$ and $f$ be as in the statement of Theorem \ref{theorem: Galois group is S_k}, and let $Z\subset \A^{\!m+1}_{\FF_{\!q}}$ denote the branch locus of $\text{pr}_{1}:V(\mathcal{F}_{\!\mathbf{A}})\longrightarrow\A^{\!m+1}$. Then $Z$ is pure of codimension $1$ in $\A^{\!m+1}$, and it satisfies the inequality
	\begin{equation}\label{eq:deg Z}
	\deg Z
	\ \le\  
	g-2+2k,
	\end{equation}
where $g$ denotes the genus of $C$.
\end{proposition}
\begin{proof}
	Define $V\overset{{}_{\text{def}}}{=}H^{0}\big(C,\mathscr{O}\big(\text{div}(f)_{-}\big)\big)^{\ast}$, the $\FF_{\!q}$-vector space dual of the space of rational functions $H^{0}\big(C,\mathscr{O}\big(\text{div}(f)_{-}\big)\big)$, and consider the pair of dual projective spaces
	$$
	\PP(V)
	\ \overset{{}_{\text{def}}}{=}\ 
	\text{Proj}\ \text{Sym}^{\bullet}_{\FF_{\!q}} V^{\ast}
	\ \ \ \ \ \ \mbox{and}\ \ \ \ \ \ 
	\PP(V^{\ast})
	\ \overset{{}_{\text{def}}}{=}\ 
	\text{Proj}\ \text{Sym}^{\bullet}_{\FF_{\!q}} V,
	$$
	where $\text{Sym}^{\bullet}_{\FF_{\!q}}V^{\ast}$ and $\text{Sym}^{\bullet}_{\FF_{\!q}}V$ denote the graded symmetric $\FF_{\!q}$-algebras on $V^{\ast}$ and $V$, respectively. Because $E$ is a very ample effective divisor, the inequalities \eqref{eq: important inequality at E} imply that $\text{div}(f)_{-}$ is very ample and effective. Identify $C$ with its image under the closed embedding
	\begin{equation}\label{equation: closed embedding for Theorem B}
	C\mono\PP(V)
	\end{equation}
induced by $\text{div}(f)_{-}$. Pass to the algebraic closure $\overline{\FF_{\!q}}$ to obtain a smooth, closed, irreducible subvariety $C_{\lilF}\subset\PP(V_{\lilF})$. By \cite[\S3.1.3 \& \S5.1]{SGA7_XVII}, this subvariety determines a dual variety $C^{\vee}_{\lilF}\ \subset\ \PP(V^{\ast}_{\lilF})$.
	
	We claim that $C^{\vee}_{\lilF}$ is a hypersurface in $\PP(V^{\ast}_{\lilF})$. To see this, let $\mathscr{N}$ denote the conormal sheaf on $C_{\lilF}$ in $\PP(V)$, and let $\PP(\mathscr{N})$ denote its associated projective scheme over $C_{\lilF}$, which comes with a projection
	\begin{equation}\label{equation: projection from P(N)}
	\PP(\mathscr{N})
	\epi
	\PP(V^{\ast}_{\lilF})
	\end{equation}
	(see \cite[\S3.1]{SGA7_XVII} for details). Note that
	$
	\text{dim}_{\ \!}\PP(\mathscr{N})
	=
	\text{dim}_{\ \!}\PP(V_{\lilF})-1
	$. By \cite[Proposition 3.5]{SGA7_XVII}, if the projection \eqref{equation: projection from P(N)} is {\em not} everywhere ramified, then the projection \eqref{equation: projection from P(N)} induces a birational morphism $\PP(\mathscr{N})\!\xymatrix{{}\ar@{-->}[r]&{}}\!C^{\vee}_{\lilF}$. Thus $C^{\vee}_{\lilF}$ is a hypersurface as soon as \eqref{equation: projection from P(N)} is not everywhere ramified. By \cite[Proposition 3.3]{SGA7_XVII}, exhibiting a point of $\PP(\mathscr{N})$ where \eqref{equation: projection from P(N)} is unramified reduces to exhibiting a hyperplane $H\subset\PP(V_{\lilF})$ and a point $x_{0}$ of the scheme-theoretic intersection $C_{\lilF}\cap H$ such that $x_{0}$ is a {\em non-degenerate} (or {\em ordinary}) {\em quadratic singularity} of $C_{\lilF}\cap H$ (see \cite[\S1.1]{SGA7_XVII} for details). When $C_{\lilF}\cap H$ is $0$-dimensional, as it is in our case, the condition that a point $x_{0}$ in $C_{\lilF}\cap H$ be a non-degenerate quadratic singularity reduces to the condition that the component of $C_{\lilF}\cap H$ containing $x_{0}$ is isomorphic to $\text{Spec}_{\ \!}\overline{\FF_{\!q}}[t]\big/(t^{2})$. Our ability to find a hyperplane $H\subset\PP(V_{\lilF})$ and point $x_{0}\in C_{\lilF}\cap H$ satisfying this condition follows from the decomposition \eqref{equation: special decomposition of F_A} of $\mathcal{F}_{\!\bold{A}}$ provided in the proof of Proposition \ref{proposition: system of equations has no solution}. Indeed, choose values $a_{0},b_{1}\in\overline{\FF_{\!q}}$, for $A_{0}$ and $B_{1}$ in \eqref{equation: special decomposition of F_A}, so that $f+a_{0}+b_{1}t+\mathcal{G}_{\!\bold{A}}$ vanishes to order $\ge2$ at a fixed $\overline{\FF_{\!q}}$-valued point $x_{0}$ in $C_{\lilF}$, and then choose the value $b_{2}=1$ for $B_{2}$.
	
	Thus $C^{\vee}_{\lilF}\subset\PP(V)$ is a hypersurface, and the hypotheses of \cite[\S 5.2]{SGA7_XVII} hold. By \cite[Proposition 5.7.2]{SGA7_XVII}, we then have
	\begin{equation}\label{equation: degree formula for C^vee}
	\text{deg}_{\ \!}C^{\vee}_{\lilF}
	\ \ =\ \ 
	g-2+2\ \!k.
	\end{equation}
	
	Our parameter space $\A^{\!m+1}_{\lilF}$ admits a natural identification with a distinguished affine open chart inside a linear subspace $L\subset\PP(V^{\ast})$. Because the hyperplane $H_{\bold{a}}$ associated to a point $\bold{a}\in\A^{\!m+1}$ does not intersect $C_{\lilF}$ at $\text{supp}_{\ \!}E$, the morphism $V(\mathcal{F}_{\!\bold{A}})_{\lilF}\epi\A^{\!m+1}_{\lilF}$ is ramified over $\bold{a}\in\A^{\!m+1}_{\lilF}$ if and only if $\bold{a}\in C^{\vee}_{\lilF}\cap L$ \cite[\S3.1.3]{SGA7_XVII} \cite[\S17.13.7 \& Proposition 17.13.2]{EGAIV_IV}. By Lemma \ref{lemma: separability of (F_A)}, $V(\mathcal{F}_{\!\bold{A}})$ is generically unramified over $\A^{\!m+1}_{\lilF}$, thus the scheme-theoretic intersection $C^{\vee}_{\lilF}\cap L$ has dimension strictly less than $\text{dim}_{\ \!}L$, which is to say that the intersection is proper \cite[Definition 7.1]{Fulton}. Thus $C^{\vee}_{\lilF}\cdot L$ is pure of codimension $1$ in $L$, and B\'ezout's Theorem in $\PP(V^{\ast}_{\lilF})$ \cite[Proposition 8.4]{Fulton} combined with \eqref{equation: degree formula for C^vee} implies that
	$$
	\text{deg}_{\ \!}C^{\vee}_{\lilF}\cdot L
	\ \ =\ \ 
	g-2+2\ \!k.
	$$
Because $Z_{\lilF}=C^{\vee}_{\lilF}\cap\A^{\!m+1}_{\lilF}\subset C^{\vee}_{\lilF}\cap L$, with $\text{deg}_{\ \!}Z=\text{deg}_{\ \!}Z_{\lilF}$, the formula \eqref{eq:deg Z} follows.
\end{proof}

\begin{remark} 
Suppose that $\bold{a}$ is an $\FF_{\!q}$-rational point in $\A^{\!m+1}$ such that $R\big/(\mathcal{F}_{\!\mathbf{a}})$ is a separable $\FF_{\!q}$-algebra. Then since $R$ is a Dedekind domain, the ideal $(\mathcal{F}_{\!\mathbf{a}})$ can be written uniquely as
	\begin{equation*}
	(\mathcal{F}_{\!\mathbf{a}})
	\ =\ 
	\mathfrak{f}_{1}\cdots\mathfrak{f}_{\ell},
	\end{equation*} 
where the $\mathfrak{f}_{i}$ are distinct prime ideals in $R$, with each $k(\mathfrak{f}_{i})=R/(\mathfrak{f}_{i})$ a separable extension of $\FF_{\!q}$. Note that in this case, we have
	\begin{equation}\label{equation: partition for factorization type}
	k
	\ =\ 
	\deg_{\ \!}\mathfrak{f}_{1}+\cdots +\deg_{\ \!}\mathfrak{f}_{\ell}.
	\end{equation}
\end{remark}

\begin{definition}\label{definition: factorization type}\normalfont
	If $\bold{a}$ is an $\FF_{\!q}$-rational point in $\A^{\!m+1}$, then the \textit{factorization type} $\lambda_{\mathbf{a}}$ is the partition of $k$ given in \eqref{equation: partition for factorization type}.
	
	The {\em factorization type counting function} for a fixed partition $\lambda$ of $k$ is the assignment $\pi_{C}(-;\lambda)$ taking the short interval $I(f,E)$ to the value
	$$
	\pi_{C}\big(I(f,E);\lambda\big)
	\ \ \Def\ \ 
	\#
	\big\{
		\mathbf{a}\in \A^{\!m+1}(\FF_{\!q}):R/(\mathcal{F}_{\bold{a}})\mbox{\ is separable\ and\ }\lambda_{\bold{a}}=\lambda
	\big\}.
	$$
\end{definition}

\begin{definition}\label{definition: probability}\normalfont
	Given a permutation $\sigma\in S_{k}$, its {\em partition type}, denoted $\lambda_{\sigma}$, is the partition of $k$ determined by the cycle decomposition of $\sigma$. For an arbitrary partition $\lambda$ of $k$, we define 
	\begin{equation}
	P(\lambda)
	\ \overset{{}_{\text{def}}}{=}\ 
	\frac{\# \{\sigma\in S_k \lvert \lambda_{\sigma}=\lambda\}}{k!}.
	\end{equation}
In other words, $P(\lambda)$ is the probability that a given  permutation in $S_k$ has partition type $\lambda$.
\end{definition}

\end{subsection}

%%%%%%%%%%%%%%%%%%%%%%%%%%%%%%%%%%%%%%

\begin{subsection}{Proof of the main theorem}\label{subsection: Proof of Theorem A}
	\ We begin by proving a general theorem that provides an estimate for the number of $\FF_{\!q}$-rational substitutions in the variables $A_{0},\dots,A_{1}$ for which the regular function $f+A_{0}+A_{1}f_{1}+\cdots A_{m}f_{m}$ on $C\!\smallsetminus\! E$ factors according to a given partition of $k$. The formulation of this theorem, as well as its proof, is very much in the spirit of \cite[Proposition 3.1]{BBR15}.
	
\begin{theorem}\label{theorem: main theorem for arbitrary partitions}
\normalfont
	Let $C$ be a smooth projective geometrically irreducible curve over $\FF_{\!q}$ of arithmetic genus $g$. Fix a positive integer $k$. Then there exists a constant $c(k,g)>0$, depending only on $k$ and $g$, such that for any datum consisting of
	\begin{itemize}
	\item[{\bf (i)}]
	a partition $\lambda$ of $k$;
	\item[{\bf (ii)}]\vskip .1cm
	a prime number $p$ and a power $q=p^{e}$;
	\item[{\bf (iii)}]\vskip .1cm
	an effective divisor $E$ on $C$ and a regular function $f$ on $C\!\smallsetminus\! E$ satisfying
	\begin{equation}\label{equation: two important inequalities}
	E
	\ <\ 
	\text{div}(f)_{-}
	\ \ \ \ \ \ \ \ \ \mbox{and}\ \ \ \ \ \ \ \ \ 
	k
	\ \overset{{}_{\text{def}}}{=}\ 
	\deg_{\ \!}\text{div}(f)_{-},
	\end{equation}
such that $p$, $E$, and $f$ satisfy either of the following conditions:
		\begin{itemize}
		\item[{\bf (a)}]\vskip .1cm
		There exists a very ample effective divisor $E_{0}$ on $C$ with $\deg E_0 \geq 2g+1$ such that $E\ge 3E_{0}$;
		\item[{\bf (b)}]\vskip .1cm
		There exists a very ample effective divisor $E_{0}$ on $C$ with $\deg E_0 \geq 2g+1$ such that $E\ge 2E_{0}$, $p=2$, and $df|_{C\smallsetminus E}$ vanishes on a nonempty finite set,
		\end{itemize}
		\vskip .2cm
	\end{itemize}
we have
	\begin{equation}\label{equation: crucial inequality}
	\Big| \pi_{C}\big(I(f,E);\lambda\big)-P(\lambda)\ \!q^{m+1}\Big|
	\ \leq\ 
	c(m,k)\ q^{m+\frac{1}{2}},
	\end{equation}
where $m\overset{{}_{\text{def}}}{=}\text{deg}(E)-g$.
\end{theorem}

\begin{proof}[{\it Proof of Theorem \ref{theorem: main theorem for arbitrary partitions}}]
		By Theorem~\ref{theorem: Galois group is S_k}, we have that $\Gal(\mathcal{F}_{\mathbf{A}}, \overline{\FF}_q(\mathbf{A})) = S_k$. Note also that by the Riemann-Roch Theorem, the requirement that $\text{deg}_{\ \!}E_0\ge 2g+1$ in (a) and (b) implies that $$\text{dim}_{\ \!}H^{0}\big(C,\mathscr{O}(E)\big)=\text{deg}(E)-g+1=m+1.$$
%	Because $\text{div}(f)_{-}>E$, this implies that
%		\begin{equation}\label{equation: inequality between k and m}
%		k
%		\ >\ 
%		\text{deg}_{\ \!}E
%		\ =\ 
%		m+g
%		\ \ge\ 
%		m.
%		\end{equation}
		
	Let $Z$ be the branch locus of the morphism $V(\mathcal{F}_{\mathbf{A}})\lra \A^{m+1}$ as in Proposition \ref{proposition: etale!}. By Proposition \ref{prop: deg of Z}, we have $\deg Z\le g-2+2k$. This provides a bound on both the number of irreducible components of $Z$ and on the degree of each of these irreducible components. Applying Lang-Weil \cite[Theorem~1]{LangWeil1954}, we obtain a constant $c_1(k,g)$, depending only on $k$ and $g$, such that
	\begin{equation}\label{equation: error term}
	\#Z(\FF_q)
	\ \leq\ 
	c_1(k,g)\ q^{m}.
	\end{equation}
		
		Consider the $\FF_q$-varieties $Y\Def V(\mathcal{F}_{\mathbf{A}})_{\A^{m+1}\!\smallsetminus Z}$ and $X\Def \A^{m+1}\!\smallsetminus Z$. By Proposition \ref{proposition: etale!}, the morphism $\rho:=\text{pr}_{1}|_{Y}:Y\lra X$ is finite \'etale of degree $k$.  By the theorem of the primitive element, we can construct the normal closure of the separable extension $\kappa(\mathcal{F}_{\!\bold{A}})\big/\FF_{\!q}(A_{0},\dots,A_{m})$ as the splitting field of some degree-$k$ polynomial over $\FF_{\!q}(A_{0},\dots,A_{m})$. The Galois closure $W$ of $Y$ over $X$ (see \cite[Proposition 5.3.9]{Szamuely}) is isomorphic to the integral closure of the coordinate ring of $X$ in this splitting field, and therefore the Galois group $\text{Aut}_{X}W$ is degree $k$.

	Observe that the closed embedding $C\mono\PP^{m}$ realizes $V(\mathcal{F}_{\!\bold{A}})$ as a hypersurface of degree $k$ inside the afffine open suscheme $\A^{\!2m+1}_{\FF_{\!q}}\subset\A^{\!m+1}_{\FF_{\!q}}\!\times_{\FF_{\!q}}\!\PP^{m}_{\FF_{\!q}}$. Because we can construct $W$ as a connected component of the $k$-fold fiber product $Y\times_{X}\cdots\times_{X}Y$ \cite[Proof of Proposition 5.3.9]{Szamuely}, we can realize $W$ as a locally closed subspace of $\A^{km+k+1}$, whose closure is a hypersurface of degree $\le k^{k}$. Thus we obtain a bound, depending only on $k$, on the degree of the closure of $W$ inside an affine space.
		
	The morphism $W\lra X$ defines a {\em geometric embedding problem}, in the sense of \cite[\S2.1]{BarySoroker2012}. In \cite[Proposition 3.1]{BBR15}, Bary-Soroker, Rosenzweig, and the first author construct a {\em geometric embedding problem associated to a polynomial $\mathcal{F}\in\FF_{\!q}[\bold{A},t]$}, in the sense of \cite[\S2.1, p. 859]{BarySoroker2012}. However, the last two paragraphs of \cite[proof of Proposition 3.1]{BBR15} make no special use of the fact that the geometric embedding is associated to a polynomial. The construction depends only on the following facts:
	\begin{itemize}
	\item[$\bullet$]\vskip .1cm
	the degree of the Galois group of the geometric embedding problem is $k$,
	\item[$\bullet$]\vskip .2cm
	$W$ is a dense open subset of a hypersurface of degree bound by a function of $k$ inside some affine space,
	\item[$\bullet$]\vskip .2cm
	the point count in the branch locus $Z\subset\A^{\!m+1}_{\FF_{\!q}}$ has upper bound \eqref{equation: error term}.
	\end{itemize}
The proof can now proceed exactly as in the last two paragraphs of \cite[proof of Proposition~3.1]{BBR15} upon replacing $V$ in that proof with our variety $X$, and noting that (ii) above lets us replace the constant $c_{2}(m,B)$ appearing in \cite[proof of Proposition~3.1]{BBR15} with a constant depending only on $k$, as in \cite[proof of Theorem 2.3]{BBR15}. Thus we obtain a constant $c(k,g)$, depending only on $k$ and $g$, such that
		\begin{equation}
		\Big| \pi_{C}\big(I(f,E);\lambda\big)-P(\lambda)\ \!q^{m+1}\Big|
		\ \leq\ 
		c(k,g)\ q^{m+\frac{1}{2}},
		\end{equation}
		as desired.
\end{proof}

\begin{proof}[{\it Proof of Theorem \ref{theorem: main theorem}}]
	Use the Young diagram $\young$ to denote the trivial partition of $k$ consisting of a single set. For this partition, we have
	$$
	P\big(\young\big)
	\ =\ 
	\frac{1}{k}
	$$
and $\pi_{C}\big(I(f,E);\young\big)=\pi_{C}\big(I(f,E)\big)$. Because the two possible conditions on $E$ in the statement of Theorem \ref{theorem: main theorem} imply conditions (iii.a) and (iii.b) of Theorem \ref{theorem: main theorem for arbitrary partitions}, the inequality \eqref{equation: crucial inequality} in Theorem \ref{theorem: main theorem for arbitrary partitions} becomes the inequality
	\begin{equation}\label{equation: crucial inequality in case of short interval}
	\Big| \pi_{C}\big(I(f,E)\big)-\frac{q^{m+1}}{k}\Big|
	\ \leq\ 
	c(k,g)\ q^{m+\frac{1}{2}}
	\end{equation}
estimating the number of elements in the short interval $I(f,E)$ with principal divisor irreducible away from $E$. As $I(f,E)=f+H^{0}\big(C,\mathscr{O}(E)\big)$ with $\text{dim}_{\ \!}H^{0}\big(C,\mathscr{O}(E)\big)=m+1$, the asymptotic formula \eqref{equation: asymptotic formula in main theorem} follows immediately from \eqref{equation: crucial inequality in case of short interval}.

	The statement of uniformity in Theorem \ref{theorem: main theorem}, i.e., the statement that the implied constant in the error term $O(q^{-1/2})$ depends only on $k$ and $g$, follows from the fact that the constant $c(k,g)$ in Theorem \ref{theorem: main theorem for arbitrary partitions} depends only on $k$ and $g$.
\end{proof}

\end{subsection}
\end{section}

%%%%%%%%%%%%%%%%%%%%%%%%%%%%%%%%%%%%%%
%%%%%%%%%%%%%%%%%%%%%%%%%%%%%%%%%%%%%%

\bibliography{refrencesEfrat}
\bibliographystyle{plain}

\end{document}